\newcommand{\area}{\mathsf{area}}
\newcommand{\dinv}{\mathsf{dinv}}
\newcommand{\bounce}{\mathsf{bounce}}
\newcommand{\Val}{\mathsf{Val}}
\newcommand{\Rise}{\mathsf{Rise}}
\newcommand{\Peak}{\mathsf{Peak}}
\newcommand{\ZVal}{\mathsf{ZVal}}
\newcommand{\DRise}{\mathsf{DRise}}
\newcommand{\DPeak}{\mathsf{DPeak}}
\newcommand{\D}{\mathsf{D}} 
\newcommand{\DD}{\mathsf{DD}} 
\newcommand{\DDd}{\mathsf{DDd}} 
\newcommand{\DDb}{\mathsf{DDb}} 
\newcommand{\PLD}{\mathsf{PLD}} 
\newcommand{\RP}{\mathsf{RP}} 
\newcommand{\qbinom}[2]{\genfrac{[}{]}{0pt}{}{#1}{#2}}
\newdimen\qrr@tikz@sharp@z@
	\edef\pgf@marshal{\noexpand\pgfutil@in@{and}{\pgfgetarrowoptions{sharp >}}}%
	\edef\pgf@tempa{\pgfgetarrowoptions{sharp >}}
	\qrr@tikz@sharp@parse\pgfgetarrowoptions{sharp >}and-\pgfgetarrowoptions{sharp >}\@qrr@tikz@sharp@parse
	\let\qrr@tikz@sharp@max\pgfmathresult
	\pgfmathsetlength\pgf@xa{.5*\pgflinewidth * tan(\qrr@tikz@sharp@max)}%
	\edef\pgf@marshal{\noexpand\pgfutil@in@{and}{\pgfgetarrowoptions{sharp >}}}%
	\edef\pgf@tempa{\pgfgetarrowoptions{sharp >}}
	\qrr@tikz@sharp@parse\pgfgetarrowoptions{sharp >}and-\pgfgetarrowoptions{sharp >}\@qrr@tikz@sharp@parse
	\pgfmathsetlength\pgf@ya{.5*\pgflinewidth * tan(max(\pgf@tempa,\pgf@tempb,0))}%
	\pgfmathsetlength\pgf@xa{-.5*\pgflinewidth * tan(\pgf@tempa)}%
	\pgfmathsetlength\pgf@xb{-.5*\pgflinewidth * tan(\pgf@tempb)}%
	\edef\pgf@marshal{\noexpand\pgfutil@in@{and}{\pgfgetarrowoptions{sharp <}}}%
	\edef\pgf@tempa{\pgfgetarrowoptions{sharp <}}
\qrr@tikz@sharp@parse\pgfgetarrowoptions{sharp <}and-\pgfgetarrowoptions{sharp <}\@qrr@tikz@sharp@parse
	\let\qrr@tikz@sharp@max\pgfmathresult
	\pgfmathsetlength\pgf@xa{.5*\pgflinewidth * tan(\qrr@tikz@sharp@max)}%
	\edef\pgf@marshal{\noexpand\pgfutil@in@{and}{\pgfgetarrowoptions{sharp <}}}%
	\edef\pgf@tempa{\pgfgetarrowoptions{sharp <}}
\qrr@tikz@sharp@parse\pgfgetarrowoptions{sharp <}and-\pgfgetarrowoptions{sharp <}\@qrr@tikz@sharp@parse
	\pgfmathsetlength\pgf@ya{.5*\pgflinewidth * tan(max(\pgf@tempa,\pgf@tempb,0))}%
	\pgfmathsetlength\pgf@xa{-.5*\pgflinewidth * tan(\pgf@tempa)}%
	\pgfmathsetlength\pgf@xb{-.5*\pgflinewidth * tan(\pgf@tempb)}%
\def\qrr@tikz@sharp@parse#1and#2\@qrr@tikz@sharp@parse{\def\pgf@tempa{#1}\def\pgf@tempb{#2}}
\newcommand\multiset[2]%
\let\existstemp\exists \renewcommand*{\exists}{\mathop \existstemp}
\let\foralltemp\forall \renewcommand*{\forall}{\mathop \foralltemp}
\def\quotient#1#2{\raise1ex\hbox{$#1$}\Big/\lower1ex\hbox{$#2$}}
\newcommand{\<}{\langle}
\renewcommand{\>}{\rangle}
\newtheorem{theorem}{Theorem}[section]
\newtheorem{lemma}[theorem]{Lemma}
\newtheorem{proposition}[theorem]{Proposition}
\newtheorem{corollary}[theorem]{Corollary}
\newtheorem{conjecture}[theorem]{Conjecture}
\theoremstyle{definition}
\newtheorem{definition}[theorem]{Definition}
\newtheorem{example}[theorem]{Example}
\theoremstyle{remark}
\newtheorem{remark}[theorem]{Remark}
\title{The Schröder case of the generalized Delta conjecture}
\author{Michele D'Adderio}
\address{Universit\'e Libre de Bruxelles (ULB)\\D\'epartement de Math\'ematique\\ Boulevard du Triomphe, B-1050 Bruxelles\\ Belgium}\email{mdadderi@ulb.ac.be}
\author{Alessandro Iraci}
\address{Universit\'a di Pisa and Universit\'e Libre de Bruxelles (ULB)\\Dipartimento di Matematica\\ Largo Bruno Pontecorvo 5, 56127 Pisa\\ Italia}\email{iraci@student.dm.unipi.it}
\author{Anna Vanden Wyngaerd}
\address{Universit\'e Libre de Bruxelles (ULB)\\D\'epartement de Math\'ematique\\ Boulevard du Triomphe, B-1050 Bruxelles\\ Belgium}\email{anvdwyng@ulb.ac.be}
\begin{document}
	
\begin{abstract}
	We prove the Schr\"{o}der case, i.e. the case $\<\cdot , e_{n-d}h_d\>$, of the conjecture of Haglund Remmel and Wilson \cite{Haglund-Remmel-Wilson-2015} for $\Delta_{h_m}\Delta_{e_{n-k-1}}'e_n$ in terms of decorated partially labelled Dyck paths, which we call \emph{generalized Delta conjecture}. This result extends the Schr\"{o}der case of the Delta conjecture proved in \cite{DAdderio-VandenWyngaerd-2017}, which in turn generalized the $q,t$-Schr\"{o}der of Haglund \cite{Haglund-Schroeder-2004}. The proof gives a recursion for these polynomials that extends the ones known for the aforementioned special cases. Also, we give another combinatorial interpretation of the same polynomial in terms of a new bounce statistic. Moreover, we give two more interpretations of the same polynomial in terms of doubly decorated parallelogram polyominoes, extending some of the results in \cite{DAdderio-Iraci-polyominoes-2017}, which in turn extended results in \cite{Aval-DAdderio-Dukes-Hicks-LeBorgne-2014}. Also, we provide combinatorial bijections explaining some of the equivalences among these interpretations.
\end{abstract}
	
\maketitle
\tableofcontents

\section{Introduction}

In \cite{Haglund-Remmel-Wilson-2015}, Haglund, Remmel and Wilson conjectured a combinatorial formula for $\Delta_{e_{n-k-1}}'e_n$ in terms of decorated labelled Dyck paths, which they called \emph{Delta conjecture}, after the so called delta operators $\Delta_f'$ introduced by Bergeron, Garsia, Haiman, and Tesler \cite{Bergeron-Garsia-Haiman-Tesler-Positivity-1999} for any symmetric function $f$.

The special case $k=0$ gives precisely the \emph{Shuffle conjecture} in \cite{HHLRU-2005}, now a theorem of Carlsson and Mellit \cite{Carlsson-Mellit-ShuffleConj-2015}. The latter turns out to be a combinatorial formula for the Frobenius characteristic of the $\mathfrak{S}_n$-module of diagonal harmonics studied by Garsia and Haiman in relation to the famous \emph{$n!$ conjecture}, now $n!$ theorem of Haiman \cite{Haiman-nfactorial-2001}.

Though the full Delta conjecture is still open, several of its consequences have been recently proved: for example, the specializations at $q=0$ and at $q=1$ have been proved in \cite{Garsia-Haglund-Remmel-Yoo-2017} and \cite{Romero-Deltaq1-2017} respectively. Also, the special cases $\<\cdot , e_{n-d}h_d\>$ and $\<\cdot , h_{n-d}h_d\>$ have been proved in \cite{DAdderio-VandenWyngaerd-2017} and \cite{DAdderio-Iraci-polyominoes-2017} respectively. Moreover, combined with the results in \cite{DAdderio-VandenWyngaerd-2017}, a ``compositional'' refinement of the case $\<\cdot , e_{n-d}h_d\>$ is proved in \cite{Zabrocki-4Catalan-2016}.

Again in \cite{Haglund-Remmel-Wilson-2015}, the authors proposed a generalization of the Delta conjecture, which predicts a combinatorial interpretation of $\Delta_{h_m}\Delta_{e_{n-k-1}}'e_n$ in terms of decorated partially labelled Dyck paths: this is what we call the \emph{generalized Delta conjecture}.

In the present work we prove the so called Schr\"{o}der case of the generalized Delta conjecture, i.e. the special case $\<\cdot , e_{n-d}h_d\>$. This extends some of the results in \cite{DAdderio-VandenWyngaerd-2017}, which in turn extended the $q,t$-Schr\"{o}der of Haglund \cite{Haglund-Schroeder-2004}.

As in the aforementioned works, to prove our result, we will give a recursion, which generalizes the ones in the literature. The essential tool on the symmetric function side will be the most technical result of \cite{DAdderio-VandenWyngaerd-2017}.

Also, we introduce a statistic \emph{bounce} which will provide another combinatorial interpretation of the same polynomial, and we give a statistic preserving bijection between the two interpretations.

It turns out that our recursion is given by iterating a simpler recursion. We will explain this intermediate recursion by giving another combinatorial interpretation of the same polynomial in terms of doubly decorated parallelogram polyominoes. This result extends some of the results in \cite{DAdderio-Iraci-polyominoes-2017}, which in turn extended the results in \cite{Aval-DAdderio-Dukes-Hicks-LeBorgne-2014}.

\medskip

The paper is organized in the following way: in Section~2 we recall the statement of the generalized Delta conjecture of \cite{Haglund-Remmel-Wilson-2015} by introducing the relevant definitions and notations. In Section~3 we establish the needed results of symmetric function theory. In particular we introduce a family of plethystic formulae, and we show that they satisfy a certain recursion and that they sum to the case $\<\cdot ,e_{n-d}h_d\>$ of the generalized Delta conjecture. In Section~4 we prove the main result of this work, i.e. the case $\<\cdot ,e_{n-d}h_d\>$ of the generalized Delta conjecture. Moreover, we prove also another combinatorial interpretation in terms of a new bounce statistic. We conclude the section with a statistic preserving bijection. In Section~5 we prove two further combinatorial interpretations of the same polynomial in terms of doubly decorated parallelogram polyominoes, and we give a bijection explaining their equivalence. In Section~6 we explain combinatorially the relation between the results in Section~4 and the ones in Section~5. Finally, in Section~7 we state an intriguing open problem.

\section{Statement of the generalized Delta conjecture}

In \cite{Haglund-Remmel-Wilson-2015}, the authors conjectured a combinatorial interpretation for the symmetric function \[ \Delta_{h_m}\Delta'_{e_{n-k-1}}e_n \] in terms of partially labelled decorated Dyck paths, known as the \emph{generalized Delta conjecture} because it reduces to the Delta conjecture when $m=0$. We give the necessary definitions.

\begin{definition}
	A \emph{Dyck path} of size $n$ is a lattice path going from $(0,0)$ to $(n,n)$, using only north and east steps and staying weakly above the line $x=y$ (also called the \emph{main diagonal}). The set of Dyck paths of size $n$ will be denoted by $\mathsf{D}(n)$. A \emph{partially labelled Dyck path} is a Dyck path whose vertical steps are labelled with (not necessarily distinct) non-negative integers such that the labels appearing in each column are strictly increasing from bottom to top, and $0$ does not appear in the first column. The set of partially labelled Dyck paths with $m$ zero labels and $n$ nonzero labels is denoted by $\PLD(m,n)$.  
\end{definition}

Partially labelled Dyck paths differ from labelled Dyck paths only in that $0$ is allowed as a label in the former and not in the latter. 

\begin{definition}
	We define for each $D\in \PLD(m,n)$ a monomial in the variables $x_1,x_2,\dots$ we set \[ x^D \coloneqq \prod_{i=1}^n x_{l_i(D)} \] where $l_i(D)$ is the label of the $i$-th vertical step of $D$ (the first being at the bottom). Notice that $x_0$ does not appear, which explains the word \emph{partially}.
\end{definition}

\begin{definition}
	Let $D$ be a (partially labelled) Dyck path of size $n+m$. We define its \emph{area word} to be the string of integers $a(D) = a_1(D) \cdots a_{n+m}(D)$ where $a_i(D)$ is the number of whole squares in the $i$-th row (from the bottom) between the path and the main diagonal.
\end{definition}

\begin{definition} \label{def: rise}
	The \emph{rises} of a Dyck path $D$ are the indices \[ \Rise(D) \coloneqq \{2\leq i \leq n+m\mid a_{i}(D)>a_{i-1}(D)\},\] or the vertical steps that are directly preceded by another vertical step. Taking a subset $\DRise(D)\subseteq \Rise (D)$ and decorating the corresponding vertical steps with a $\ast$, we obtain a \emph{decorated Dyck path}, and we will refer to these vertical steps as \emph{decorated rises}. 
\end{definition}

The set of partially labelled decorated Dyck paths with $m$ zero labels, $n$ nonzero labels and $k$ decorated rises is denoted by $\PLD(m,n)^{\ast k}$. See Figure~\ref{fig:pldExample1} for an example. 

We define two statistics on this set.

\begin{definition} \label{def: area DP}
	We define the \emph{area} of a (partially labelled) decorated Dyck path $D$ as \[ \area(D) \coloneqq \sum_{i\not \in \DRise(D)} a_i(D). \]
\end{definition}

For a more visual definition, the area is the number of whole squares that lie between the path and the main diagonal, except for the ones in the rows containing a decorated rise. For example, the decorated Dyck path in Figure~\ref{fig:pldExample1} has area $6$. 

Notice that the area does not depend on the labels. 

\begin{definition} \label{def: dinv DP}
	Let $D \in \PLD(m,n)$. For $1 \leq i < j \leq n+m$, we say that the pair $(i,j)$ is an \emph{inversion} if
	\begin{itemize}
		\item either $a_i(D) = a_j(D)$ and $l_i(D) < l_j(D)$ (\emph{primary inversion}),
		\item or $a_i(D) = a_j(D) + 1$ and $l_i(D) > l_j(D)$ (\emph{secondary inversion}).
	\end{itemize}
	Then we define \[\dinv(D)\coloneqq \# \{0 \leq i < j \leq n+m \mid (i,j) \; \text{is an inversion}. \}\]
\end{definition}

For example, the decorated Dyck path in Figure~\ref{fig:pldExample1} has $1$ primary inversion (the pair $(2,4)$) and $2$ secondary inversions (the pairs $(2,3)$ and $(5,6)$), so its dinv is $3$. Notice that the decorations on the rises do not affect the dinv.

The following conjecture is stated in \cite{Haglund-Remmel-Wilson-2015}.

\begin{conjecture}[Generalized Delta]
	\[ \Delta_{h_{m}} \Delta'_{e_{n-k-1}} e_{n} = \sum_{D \in \PLD(m,n)^{\ast k}} q^{\dinv(D)} t^{\area(D)} x^D. \]
\end{conjecture}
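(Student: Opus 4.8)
The plan is to prove the identity by producing a common recursion for the two sides, refined over a finer combinatorial index so that the recursion actually closes. On the combinatorial side, I would decompose each $D\in\PLD(m,n)^{\ast k}$ according to the sequence of lattice points at which it returns to the main diagonal — its \emph{touch composition} $\alpha\vDash n+m$ — together with local data about the first vertical step and the lowest decorated rise. Removing the first north step, or contracting a decorated rise sitting at the bottom, or splitting off the first diagonal-touching arch, should express
\[
\sum_{\substack{D\in\PLD(m,n)^{\ast k}\\ \mathrm{touch}(D)=\alpha}} q^{\dinv(D)}\,t^{\area(D)}\,x^D
\]
in terms of the analogous generating functions for smaller parameters $(m',n',k')$ and shorter compositions, where a maximal run of equal labels in a column contributes a complete-homogeneous factor, each element of $\DRise(D)$ contributes a controlled power of $t$ and an $e$-type factor, and the constraint that $0$ not appear in the first column is tracked through the recursion as in the Schröder case of this paper.

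On the symmetric-function side, the first task is a matching compositional refinement of $\Delta_{h_m}\Delta'_{e_{n-k-1}}e_n$. I would build it from the operators of the Carlsson--Mellit Dyck path algebra used to prove the Shuffle conjecture \cite{Carlsson-Mellit-ShuffleConj-2015}: write $e_n$ as $\sum_{\alpha\vDash n}C_\alpha\cdot 1$ in terms of the $T_i$ and the raising/lowering operators $d_+$, $d_-$, so that $\nabla e_n=\sum_\alpha \nabla C_\alpha$; then insert $\Delta'_{e_{n-k-1}}$ as a $k$-fold ``rise-selection'' operator and encode $\Delta_{h_m}$ by a plethystic substitution of the type already exploited in \cite{DAdderio-VandenWyngaerd-2017}. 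The target is an operator identity asserting that each $\alpha$-piece of the refined left-hand side equals the $\alpha$-indexed combinatorial sum above; this would be proved by induction on $n+m+k$ using the commutation relations of $d_+$, $d_-$, $T_i$ together with the known action of $\Delta_{h_m}$ and $\Delta'_{e_{n-k-1}}$ on the modified Macdonald basis.

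With both recursions in hand the proof concludes by matching them term by term over $\alpha$ and checking base cases: $n+m$ small, $k=0$ (the Shuffle/Delta reduction, where one invokes \cite{Carlsson-Mellit-ShuffleConj-2015} and, for the module interpretation, \cite{Haiman-nfactorial-2001}), and $m=0$ (the ordinary Delta conjecture). Closing the induction yields the full identity, and taking $\langle\,\cdot\,,e_{n-d}h_d\rangle$ recovers the Schröder statement established here.

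The hard part will be the symmetric-function recursion. The Carlsson--Mellit machinery is organized around $\nabla$ and has no native slot for the $\Delta'_{e_{n-k-1}}$ rise-selection, and the decorated rises on the Dyck path side have no clean counterpart in the Dyck path algebra as it currently stands; inserting $k$ decorations amounts to iterating a new operator whose commutation with $d_+$, $d_-$, and $T_i$ is precisely what must be proved. Making this compatible with the plethystic shift encoding $\Delta_{h_m}$ and with the first-column zero-label restriction is where the essential obstruction lies — and is the reason the full conjecture remains open beyond the slice treated in this paper.
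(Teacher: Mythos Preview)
The statement you are attempting to prove is the \emph{Generalized Delta conjecture}, and it is explicitly stated as a conjecture in the paper, not as a theorem. The paper does not contain a proof of this statement. What the paper actually proves is the Schr\"oder case --- the scalar product $\langle\,\cdot\,,e_{n-d}h_d\rangle$ of both sides --- via Theorem~\ref{thm:recoFnkpdl} combined with Theorem~\ref{thm: recursion pld dinv}. There is therefore no ``paper's own proof'' to compare your proposal against.

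As for the content of your proposal: it is not a proof but a research outline, and you seem to be aware of this, since your final paragraph correctly identifies the essential obstruction --- the Carlsson--Mellit Dyck path algebra has no native slot for the rise-selection operator $\Delta'_{e_{n-k-1}}$, and finding the right commutation relations for a ``decoration operator'' is exactly the open problem. Your sketch also conflates two distinct unresolved issues: the Delta conjecture itself (the case $m=0$) is still open in general, so it cannot serve as a base case for an induction on $m+n+k$ as you propose. The compositional refinement you suggest is close in spirit to the open problem discussed in Section~7 of the paper (see \eqref{eq:openpbl}), but even there the paper notes that the na\"ive $q,t$-enumerator on the combinatorial side does \emph{not} match the compositional symmetric-function piece, so the recursion you hope to close does not close in the obvious way.

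In short: the paper proves a special case by an entirely different and much more concrete route (explicit plethystic manipulation of the family $F_{n,k;p}^{(d,\ell)}$ and a matching combinatorial recursion), and your proposal is a plan for attacking the full conjecture whose key step you yourself flag as unproved.
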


\begin{figure*}[!ht]
	\centering
	\begin{tikzpicture}[scale = .8]
	
	\draw[step=1.0, gray!60, thin] (0,0) grid (8,8);
	
	\draw[gray!60, thin] (0,0) -- (8,8);
	
	\draw[blue!60, line width=2pt] (0,0) -- (0,1) -- (0,2) -- (1,2) -- (2,2) -- (2,3) -- (2,4) -- (2,5) -- (3,5) -- (4,5) -- (4,6) -- (4,7) -- (4,8) -- (5,8) -- (6,8) -- (7,8) -- (8,8);
	
	\draw (0.5,0.5) circle (0.4 cm) node {$1$};
	\draw (0.5,1.5) circle (0.4 cm) node {$3$};
	\draw (2.5,2.5) circle (0.4 cm) node {$0$};
	\draw (2.5,3.5) circle (0.4 cm) node {$4$};
	\draw (2.5,4.5) circle (0.4 cm) node {$6$};
	\draw (4.5,5.5) circle (0.4 cm) node {$0$};
	\draw (4.5,6.5) circle (0.4 cm) node {$2$};
	\draw (4.5,7.5) circle (0.4 cm) node {$6$};
	
	\node at (1.5,3.5) {$\ast$};
	\node at (3.5,6.5) {$\ast$};
	
	\end{tikzpicture}
	\caption{Example of an element in $\PLD(2,6)^{\ast 2}$.}
	\label{fig:pldExample1}
\end{figure*}

\section{Symmetric functions}

For all the undefined notations and the unproven identities, we refer to \cite{DAdderio-VandenWyngaerd-2017}*{Section~1}, where definitions, proofs, and/or references can be found. In the next subsection we will limit ourselves to introduce some notation, while in the following one we will recall some identities that are going to be useful in the sequel. In the third and final subsection we will prove the main results on symmetric functions of this work.

For more references on symmetric functions cf. also \cite{Macdonald-Book-1995}, \cite{Stanley-Book-1999} and \cite{Haglund-Book-2008}.

\subsection{Notation}

We denote by $\Lambda=\bigoplus_{n\geq 0}\Lambda^{(k)}$ the graded algebras of symmetric functions with coefficients in $\mathbb{Q}(q,t)$, and by $\<\, , \>$ the \emph{Hall scalar product} on $\Lambda$, which can be defined by saying that the Schur functions form an orthonormal basis.

The standard bases of the symmetric functions that will appear in our
calculations are the complete $\{h_{\lambda}\}_{\lambda}$, elementary $\{e_{\lambda}\}_{\lambda}$, power $\{p_{\lambda}\}_{\lambda}$ and Schur $\{s_{\lambda}\}_{\lambda}$ bases.

\emph{We will use implicitly the usual convention that $e_0 = h_0 = 1$ and $e_k = h_k = 0$ for $k < 0$.}

For a partition $\mu\vdash n$, we denote by
\begin{align}
	\widetilde{H}_{\mu} \coloneqq \widetilde{H}_{\mu}[X]=\widetilde{H}_{\mu}[X;q,t]=\sum_{\lambda\vdash n}\widetilde{K}_{\lambda \mu}(q,t)s_{\lambda}
\end{align}
the \emph{(modified) Macdonald polynomials}, where
\begin{align}
	\widetilde{K}_{\lambda \mu} \coloneqq \widetilde{K}_{\lambda \mu}(q,t)=K_{\lambda \mu}(q,1/t)t^{n(\mu)}\quad \text{ with }\quad n(\mu)=\sum_{i\geq 1}\mu_i(i-1)
\end{align}
are the \emph{(modified) Kostka coefficients} (see \cite{Haglund-Book-2008}*{Chapter~2} for more details). 

The set $\{\widetilde{H}_{\mu}[X;q,t]\}_{\mu}$ is a basis of the ring of symmetric functions $\Lambda$. This is a modification of the basis introduced by Macdonald \cite{Macdonald-Book-1995}.

If we identify the partition $\mu$ with its Ferrers diagram, i.e. with the collection of cells $\{(i,j)\mid 1\leq i\leq \mu_i, 1\leq j\leq \ell(\mu)\}$, then for each cell $c\in \mu$ we refer to the \emph{arm}, \emph{leg}, \emph{co-arm}, and \emph{co-leg} (denoted respectively as $a_\mu(c), l_\mu(c), a_\mu(c)', l_\mu(c)'$) as the number of cells in $\mu$ that are strictly to the right, above, to the left and below $c$ in $\mu$, respectively.

We set $M \coloneqq (1-q)(1-t)$ and we define for every partition $\mu$
\begin{align}
	B_{\mu} & \coloneqq B_{\mu}(q,t)=\sum_{c\in \mu}q^{a_{\mu}'(c)}t^{l_{\mu}'(c)} \\
	T_{\mu} & \coloneqq T_{\mu}(q,t)=\prod_{c\in \mu}q^{a_{\mu}'(c)}t^{l_{\mu}'(c)} \\
	\Pi_{\mu} & \coloneqq \Pi_{\mu}(q,t)=\prod_{c\in \mu/(1)}(1-q^{a_{\mu}'(c)}t^{l_{\mu}'(c)}) \\
	w_{\mu} & \coloneqq w_{\mu}(q,t)=\prod_{c\in \mu} (q^{a_{\mu}(c)} - t^{l_{\mu}(c) + 1}) (t^{l_{\mu}(c)} - q^{a_{\mu}(c) + 1}).
\end{align}

We will make extensive use of the \emph{plethystic notation} (cf. \cite{Haglund-Book-2008}*{Chapter~1}).

We define the \emph{nabla} operator on $\Lambda$ by
\begin{align}
	\nabla \widetilde{H}_{\mu} \coloneqq T_{\mu} \widetilde{H}_{\mu} \quad \text{ for all } \mu,
\end{align}
and we define the \emph{delta} operators $\Delta_f$ and $\Delta_f'$ on $\Lambda$ by
\begin{align}
	\Delta_f \widetilde{H}_{\mu} \coloneqq f[B_{\mu}(q,t)] \widetilde{H}_{\mu} \quad \text{ and } \quad 
	\Delta_f' \widetilde{H}_{\mu}  \coloneqq f[B_{\mu}(q,t)-1] \widetilde{H}_{\mu}, \quad \text{ for all } \mu.
\end{align}
Observe that on the vector space of symmetric functions homogeneous of degree $n$, denoted by $\Lambda^{(n)}$, the operator $\nabla$ equals $\Delta_{e_n}$. Moreover, for every $1\leq k\leq n$,
\begin{align}
	\label{eq:deltaprime}
	\Delta_{e_k} = \Delta_{e_k}' + \Delta_{e_{k-1}}' \quad \text{ on } \Lambda^{(n)},
\end{align}
and for any $k > n$, $\Delta_{e_k} = \Delta_{e_{k-1}}' = 0$ on $\Lambda^{(n)}$, so that $\Delta_{e_n}=\Delta_{e_{n-1}}'$ on $\Lambda^{(n)}$.

\medskip

For a given $k\geq 1$, we define the \emph{Pieri coefficients} $c_{\mu \nu}^{(k)}$ and $d_{\mu \nu}^{(k)}$ by setting
\begin{align}
	\label{eq:def_cmunu} h_{k}^\perp \widetilde{H}_{\mu}[X] & =\sum_{\nu \subset_k \mu} c_{\mu \nu}^{(k)} \widetilde{H}_{\nu}[X], \\
	\label{eq:def_dmunu} e_{k}\left[\frac{X}{M}\right] \widetilde{H}_{\nu}[X] & = \sum_{\mu \supset_k \nu} d_{\mu \nu}^{(k)} \widetilde{H}_{\mu}[X],
\end{align}
where $\nu\subset_k \mu$ means that $\nu$ is contained in $\mu$ (as Ferrers diagrams) and $\mu/\nu$ has $k$ lattice cells, and the symbol $\mu \supset_k \nu$ is analogously defined. The following identity is well-known:
\begin{align}
	\label{eq:rel_cmunu_dmunu}
	c_{\mu \nu}^{(k)} = \frac{w_{\mu}}{w_{\nu}}d_{\mu \nu}^{(k)}.
\end{align}

\medskip

We will also use the symmetric functions $E_{n,k}$, that were introduced in \cite{Garsia-Haglund-qtCatalan-2002} by means of the following expansion:
\begin{align}
	\label{eq:def_Enk}
	e_n \left[ X \frac{1-z}{1-q} \right] = \sum_{k=1}^n \frac{(z;q)_k}{(q;q)_k} E_{n,k},
\end{align}
where
\begin{align}
	(a;q)_s \coloneqq (1 - a)(1 - qa)(1 - q^2 a) \cdots (1 - q^{s-1} a)
\end{align}
is the usual $q$-\emph{rising factorial}.

Observe that 
\begin{equation} \label{eq:Enk}
e_n=\sum_{k=1}^nE_{n,k}.
\end{equation}

Recall also the standard notation for $q$-analogues: for $n, k\in \mathbb{N}$, we set
\begin{align}
	[0]_q \coloneqq 0, \quad \text{ and } \quad [n]_q \coloneqq \frac{1-q^n}{1-q} = 1+q+q^2+\cdots+q^{n-1} \quad \text{ for } n \geq 1,
\end{align}
\begin{align}
	[0]_q! \coloneqq 1 \quad \text{ and }\quad [n]_q! \coloneqq [n]_q[n-1]_q \cdots [2]_q [1]_q \quad \text{ for } n \geq 1,
\end{align}
and
\begin{align}
	\qbinom{n}{k}_q  \coloneqq \frac{[n]_q!}{[k]_q![n-k]_q!} \quad \text{ for } n \geq k \geq 0, \quad \text{ while } \quad \qbinom{n}{k}_q \coloneqq 0 \quad \text{ for } n < k.
\end{align}

\subsection{Some basic identities}

The following identity is well-known: for any symmetric function $f\in \Lambda^{(n)}$,
\begin{align}
	\label{eq:lem_e_h_Delta}
	\< \Delta_{e_{d}} f, h_n \> = \< f, e_d h_{n-d} \>.
\end{align}

We will use the following form of \emph{Macdonald-Koornwinder reciprocity}: for all partitions $\alpha$ and $\beta$
\begin{align}
	\label{eq:Macdonald_reciprocity}
	\frac{\widetilde{H}_{\alpha}[MB_{\beta}]}{\Pi_{\alpha}} = \frac{\widetilde{H}_{\beta}[MB_{\alpha}]}{\Pi_{\beta}}.
\end{align}
The following identity is also known as \emph{Cauchy identity}:
\begin{align}
	\label{eq:Mac_Cauchy}
	e_n \left[ \frac{XY}{M} \right] = \sum_{\mu \vdash n} \frac{ \widetilde{H}_{\mu} [X] \widetilde{H}_\mu [Y]}{w_\mu} \quad \text{ for all } n.
\end{align}

We need the following well known proposition.
\begin{proposition} 
	For $n\in \mathbb{N}$ we have
	\begin{align}
		\label{eq:en_expansion}
		e_n[X] = e_n \left[ \frac{XM}{M} \right] = \sum_{\mu \vdash n} \frac{M B_\mu \Pi_{\mu} \widetilde{H}_\mu[X]}{w_\mu}.
	\end{align}
	Moreover, for all $k\in \mathbb{N}$ with $0\leq k\leq n$, we have
	\begin{align}
		\label{eq:e_h_expansion}
		h_k \left[ \frac{X}{M} \right] e_{n-k} \left[ \frac{X}{M} \right] = \sum_{\mu \vdash n} \frac{e_k[B_\mu] \widetilde{H}_\mu[X]}{w_\mu}.
	\end{align}
\end{proposition}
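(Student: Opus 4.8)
The plan is to obtain both identities by specializing the Cauchy identity \eqref{eq:Mac_Cauchy}, $e_n\!\left[\frac{XY}{M}\right] = \sum_{\mu\vdash n}\frac{\widetilde H_\mu[X]\,\widetilde H_\mu[Y]}{w_\mu}$, at well-chosen plethystic alphabets $Y$, combined with only routine plethystic manipulations and one standard evaluation of modified Macdonald polynomials.

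For \eqref{eq:en_expansion}: since $M/M = 1$ we have $e_n[X] = e_n\!\left[\frac{XM}{M}\right]$, so putting $Y = M$ in \eqref{eq:Mac_Cauchy} gives $e_n[X] = \sum_{\mu\vdash n}\frac{\widetilde H_\mu[X]\,\widetilde H_\mu[M]}{w_\mu}$, and it remains to establish $\widetilde H_\mu[M] = MB_\mu\Pi_\mu$. I would deduce this from Macdonald-Koornwinder reciprocity \eqref{eq:Macdonald_reciprocity} with $\alpha = \mu$ and $\beta = (1)$: there $B_{(1)} = 1$, $\Pi_{(1)} = 1$ (the empty product), and $\widetilde H_{(1)}[Z] = Z$ (because $\widetilde H_{(1)} = s_1 = h_1$), so the right-hand side of \eqref{eq:Macdonald_reciprocity} is $\widetilde H_{(1)}[MB_\mu] = MB_\mu$ while the left-hand side is $\widetilde H_\mu[MB_{(1)}]/\Pi_\mu = \widetilde H_\mu[M]/\Pi_\mu$; equating the two gives $\widetilde H_\mu[M] = MB_\mu\Pi_\mu$, hence \eqref{eq:en_expansion}.

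For \eqref{eq:e_h_expansion}: I would introduce a formal parameter $u$ and set $Y = 1-u$ in \eqref{eq:Mac_Cauchy}, obtaining $e_n\!\left[\frac{X(1-u)}{M}\right] = \sum_{\mu\vdash n}\frac{\widetilde H_\mu[X]\,\widetilde H_\mu[1-u]}{w_\mu}$, and then compare coefficients of powers of $u$. On the left, writing $\frac{X(1-u)}{M} = \frac{X}{M} - u\frac{X}{M}$ and using the addition formula $e_n[A+B] = \sum_{j}e_j[A]e_{n-j}[B]$ together with $e_m[-C] = (-1)^m h_m[C]$ and homogeneity of $h_{n-j}$, one gets $e_n\!\left[\frac{X(1-u)}{M}\right] = \sum_{k=0}^n (-u)^k\, h_k\!\left[\frac{X}{M}\right] e_{n-k}\!\left[\frac{X}{M}\right]$. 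On the right, the key input is the principal specialization $\widetilde H_\mu[1-u] = \prod_{c\in\mu}\bigl(1 - u\,q^{a_\mu'(c)}t^{l_\mu'(c)}\bigr)$; expanding this product over subsets of the cells of $\mu$ and recalling $B_\mu = \sum_{c\in\mu}q^{a_\mu'(c)}t^{l_\mu'(c)}$, the coefficient of $(-u)^k$ is precisely $e_k[B_\mu]$, so $\widetilde H_\mu[1-u] = \sum_{k=0}^n(-u)^k e_k[B_\mu]$. Matching the coefficients of $(-u)^k$ on the two sides yields \eqref{eq:e_h_expansion}.

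I expect the only genuinely non-formal ingredient to be the evaluation $\widetilde H_\mu[1-u] = \prod_{c\in\mu}\bigl(1 - u\,q^{a_\mu'(c)}t^{l_\mu'(c)}\bigr)$ (the specialization of $\widetilde H_\mu$ at the single-term alphabet $1-u$); this is a well-known property of modified Macdonald polynomials and can be quoted from \cite{Haglund-Book-2008}*{Chapter~2}, or extracted from reciprocity \eqref{eq:Macdonald_reciprocity} itself by taking $\beta$ to be a single row and running a short generating-function argument --- which also shows that \eqref{eq:en_expansion} is nothing but the linear-in-$u$ part of the $\beta = (1)$ case. All the remaining steps (the addition formula, $e_m[-C] = (-1)^m h_m[C]$, and the $u$-coefficient bookkeeping) are standard.
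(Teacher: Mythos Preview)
The paper does not actually prove this proposition: it is introduced with the phrase ``We need the following well known proposition'' and is simply quoted without argument, with implicit reference to \cite{DAdderio-VandenWyngaerd-2017}*{Section~1} and \cite{Haglund-Book-2008} for proofs and background. So there is no ``paper's own proof'' to compare against here.

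That said, your argument is the standard one and is correct. Specializing the Cauchy identity at $Y=M$ and then invoking reciprocity with $\beta=(1)$ to get $\widetilde H_\mu[M]=MB_\mu\Pi_\mu$ is exactly how \eqref{eq:en_expansion} is usually derived; and specializing at $Y=1-u$, using $\widetilde H_\mu[1-u]=\prod_{c\in\mu}(1-u\,q^{a_\mu'(c)}t^{l_\mu'(c)})$, and extracting the coefficient of $(-u)^k$ is the textbook route to \eqref{eq:e_h_expansion}. The one ingredient you flag as non-formal, the evaluation of $\widetilde H_\mu[1-u]$, is indeed the substantive step and is available in \cite{Haglund-Book-2008}*{Chapter~2} (it is the dual-Cauchy/Koornwinder specialization); everything else is routine plethystic bookkeeping. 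Your proof would be perfectly acceptable as a self-contained justification of a result the paper chose to take for granted.
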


Using \eqref{eq:Mac_Cauchy} with $Y = [j]_q = \frac{1-q^j}{1-q}$, we get the following well-known expansion:
\begin{align} \label{eq:qn_q_Macexp}
	e_n \left[ X \frac{1-q^j}{1-q} \right] & = \sum_{\mu\vdash n} \frac{\widetilde{H}_\mu[X] \widetilde{H}_\mu[M[j]_q]}{w_\mu}\\
	& = (1-q^j) \sum_{\mu\vdash n} \frac{ \Pi_\mu \widetilde{H}_\mu[X] h_j[(1-t)B_\mu]}{w_\mu}.
\end{align}

We need another theorem of Haglund: the following is essentially \cite{Haglund-Schroeder-2004}*{Theorem~2.5}.
\begin{theorem}
	For $k,n\in \mathbb{N}$ with $1\leq k\leq n$, 
	\begin{align}
		\label{eq:Haglund_nablaEnk}
		\nabla E_{n,k} = t^{n-k}(1-q^k) \mathbf{\Pi} h_k \left[ \frac{X}{1-q} \right] h_{n-k} \left[ \frac{X}{M} \right],
	\end{align}
	where $\mathbf{\Pi}$ is the invertible linear operator defined by
	\begin{equation} \label{eq:defPi}
	\mathbf{\Pi} \widetilde{H}_\mu[X] = \Pi_\mu \widetilde{H}_\mu[X] \qquad \text{ for all } \mu.
	\end{equation}
\end{theorem}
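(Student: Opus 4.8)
The plan is to prove the identity by expanding both sides in the modified Macdonald basis $\{\widetilde{H}_\mu\}_{\mu\vdash n}$, after packaging the whole family $\{E_{n,k}\}_{1\le k\le n}$ together by means of \eqref{eq:def_Enk}. Specialising \eqref{eq:def_Enk} at $z=q^j$ and using the telescoping identity $\tfrac{(q^j;q)_k}{(q;q)_k}=\qbinom{j+k-1}{k}_q$ gives $e_n\!\left[X\tfrac{1-q^j}{1-q}\right]=\sum_{k=1}^n\qbinom{j+k-1}{k}_q E_{n,k}$, and hence, applying $\nabla$, for every integer $j\ge 0$ we have
\begin{align*}
\nabla e_n\!\left[X\tfrac{1-q^j}{1-q}\right]=\sum_{k=1}^n\qbinom{j+k-1}{k}_q \nabla E_{n,k}.
\end{align*}
Since the functions $z\mapsto\tfrac{(z;q)_k}{(q;q)_k}$ (for $k=1,\dots,n$) are polynomials of degree $k$, hence linearly independent, any identity $\sum_{k=1}^n c_k\tfrac{(z;q)_k}{(q;q)_k}=0$ holding for all $z$ in the infinite set $\{q^j:j\ge 0\}$ forces every $c_k=0$. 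It is therefore enough to prove, for every $j\ge 0$,
\begin{align*}
\nabla e_n\!\left[X\tfrac{1-q^j}{1-q}\right]=\sum_{k=1}^n\qbinom{j+k-1}{k}_q\,t^{n-k}(1-q^k)\,\mathbf{\Pi}\,h_k\!\left[\tfrac{X}{1-q}\right]h_{n-k}\!\left[\tfrac{X}{M}\right].
\end{align*}

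For the left-hand side I would invoke \eqref{eq:qn_q_Macexp} directly: it gives $e_n\!\left[X\tfrac{1-q^j}{1-q}\right]=(1-q^j)\sum_{\mu\vdash n}\tfrac{\Pi_\mu\,h_j[(1-t)B_\mu]}{w_\mu}\,\widetilde{H}_\mu[X]$, so applying $\nabla$ multiplies the coefficient of $\widetilde{H}_\mu[X]$ by $T_\mu$. Applying the invertible operator $\mathbf{\Pi}^{-1}$ (cf.~\eqref{eq:defPi}) to both sides of the identity above — which removes $\mathbf{\Pi}$ on the right and cancels the factor $\Pi_\mu$ coming from \eqref{eq:qn_q_Macexp} on the left — reduces the claim to
\begin{align*}
\sum_{k=1}^n\qbinom{j+k-1}{k}_q\,t^{n-k}(1-q^k)\,h_k\!\left[\tfrac{X}{1-q}\right]h_{n-k}\!\left[\tfrac{X}{M}\right]=(1-q^j)\sum_{\mu\vdash n}\frac{T_\mu\,h_j[(1-t)B_\mu]}{w_\mu}\,\widetilde{H}_\mu[X].
\end{align*}
The next step is to expand the product on the left in the basis $\{\widetilde{H}_\mu\}$: writing $h_k\!\left[\tfrac{X}{1-q}\right]=h_k\!\left[\tfrac{X(1-t)}{M}\right]$ and using the Cauchy identity \eqref{eq:Mac_Cauchy} together with the Pieri coefficients \eqref{eq:def_cmunu}--\eqref{eq:def_dmunu}, one obtains an explicitly computable expansion $h_k\!\left[\tfrac{X}{1-q}\right]h_{n-k}\!\left[\tfrac{X}{M}\right]=\sum_{\mu\vdash n}\tfrac{P_{k,\mu}(q,t)}{w_\mu}\,\widetilde{H}_\mu[X]$. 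Comparing the coefficient of $\widetilde{H}_\mu[X]$ on the two sides then reduces the whole theorem to a family of scalar identities in $\mathbb{Q}(q,t)$, one for each $\mu\vdash n$ and each $j\ge 0$:
\begin{align*}
\sum_{k=1}^n\qbinom{j+k-1}{k}_q\,t^{n-k}(1-q^k)\,P_{k,\mu}(q,t)=(1-q^j)\,T_\mu\,h_j[(1-t)B_\mu].
\end{align*}

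The verification of this scalar identity is the main obstacle, and is essentially the content of \cite{Haglund-Schroeder-2004}*{Theorem~2.5}. To prove it I would use Macdonald--Koornwinder reciprocity \eqref{eq:Macdonald_reciprocity} to re-expand $h_j[(1-t)B_\mu]$ over the cells of $\mu$, and then recognise the remaining sum over $k$ as an instance of the $q$-binomial theorem applied to the weights $\qbinom{j+k-1}{k}_q=\tfrac{(q^j;q)_k}{(q;q)_k}$ — these are exactly the coefficients that make the $k$-sum collapse (the elementary identity $\qbinom{j+k-1}{k}_q(1-q^k)=(1-q^j)\qbinom{j+k-1}{k-1}_q$ is useful here to clear the common factor $1-q^j$). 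The genuinely delicate point is the bookkeeping needed to match the two sides, in particular keeping track of the powers of $t$, the factors $1-q^k$, and the global factor $T_\mu$; but once the Macdonald expansion of $h_k\!\left[\tfrac{X}{1-q}\right]h_{n-k}\!\left[\tfrac{X}{M}\right]$ is in hand this is elementary. Combining the scalar identities over all $\mu\vdash n$ and all $j\ge 0$ with the linear-independence argument of the first paragraph then yields $\nabla E_{n,k}=t^{n-k}(1-q^k)\,\mathbf{\Pi}\,h_k\!\left[\tfrac{X}{1-q}\right]h_{n-k}\!\left[\tfrac{X}{M}\right]$ for every $1\le k\le n$, as desired.
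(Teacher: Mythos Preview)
The paper does not prove this theorem at all: it is stated as a known result of Haglund and attributed to \cite{Haglund-Schroeder-2004}*{Theorem~2.5}, with no argument supplied. So there is no ``paper's own proof'' to compare your attempt against.

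As for your proposal itself, the outer reduction is sound: specialising \eqref{eq:def_Enk} at $z=q^j$, using the linear independence of the polynomials $(z;q)_k/(q;q)_k$, and expanding via \eqref{eq:qn_q_Macexp} correctly reduces the theorem to a family of coefficient identities in the Macdonald basis. The problem is what happens after that. You introduce coefficients $P_{k,\mu}$ for the expansion of $h_k\!\left[\tfrac{X}{1-q}\right]h_{n-k}\!\left[\tfrac{X}{M}\right]$ but never compute them, and then you write that the resulting scalar identity ``is essentially the content of \cite{Haglund-Schroeder-2004}*{Theorem~2.5}'' --- which is precisely the statement you are supposed to be proving, so invoking it here is circular. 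The subsequent sketch (``use Macdonald--Koornwinder reciprocity to re-expand $h_j[(1-t)B_\mu]$'', ``recognise the sum over $k$ as a $q$-binomial theorem'') is too vague to constitute a proof: reciprocity \eqref{eq:Macdonald_reciprocity} relates evaluations of $\widetilde{H}_\alpha$ and $\widetilde{H}_\beta$, not $h_j[(1-t)B_\mu]$ directly, and you have not said what $P_{k,\mu}$ actually is, so there is nothing concrete to collapse via the $q$-binomial theorem. In short, your framework is reasonable, but the genuine computational core --- which is the whole content of Haglund's theorem --- is left undone.
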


The main ingredient for proving our recursion on the symmetric function side is the following crucial theorem from \cite{DAdderio-VandenWyngaerd-2017}.
\begin{theorem}[\cite{DAdderio-VandenWyngaerd-2017}*{Theorem~3.1}]
	For $m,k\geq 1$ and $\ell\geq 0$, we have
	\begin{align} \label{eq:mastereq}
		\sum_{\gamma\vdash m}\frac{\widetilde{H}_\gamma[X]}{w_\gamma} h_k[(1-t)B_\gamma]e_\ell[B_\gamma]  & = \sum_{j=0}^{\ell} t^{\ell-j}\sum_{s=0}^{k}q^{\binom{s}{2}}\qbinom{s+j}{s}_q \qbinom{k+j-1}{s+j-1}_q \\
		\notag & \times  h_{s+j}\left[\frac{X}{1-q}\right] h_{\ell-j}\left[\frac{X}{M}\right] e_{m-s-\ell}\left[\frac{X}{M}\right].
	\end{align}
\end{theorem}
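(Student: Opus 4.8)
The plan is to use Macdonald--Koornwinder reciprocity to strip off the two $B_\gamma$-dependent factors on the left-hand side, thereby reducing \eqref{eq:mastereq} to a purely plethystic identity, and then to establish that identity using Haglund's formula \eqref{eq:Haglund_nablaEnk} together with some $q$-binomial bookkeeping.

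\emph{Step 1: the reciprocity reduction.} Since $B_{(k)}=[k]_q$, comparing the coefficients of $\widetilde H_\gamma[X]$ in the two lines of \eqref{eq:qn_q_Macexp} (taken with $j=k$) gives, for every $\gamma$, the identity $\widetilde H_\gamma[M[k]_q]=(1-q^k)\,\Pi_\gamma\, h_k[(1-t)B_\gamma]$. I would use this to eliminate $h_k[(1-t)B_\gamma]$ from the left-hand side of \eqref{eq:mastereq}, use $e_\ell[B_\gamma]\widetilde H_\gamma[X]=\Delta_{e_\ell}\widetilde H_\gamma[X]$ to absorb the factor $e_\ell[B_\gamma]$ into the diagonal operator $\Delta_{e_\ell}$, and then recognize $\sum_{\gamma\vdash m}\frac{\widetilde H_\gamma[X]\,\widetilde H_\gamma[M[k]_q]}{w_\gamma}=e_m\!\left[X[k]_q\right]$ by the Cauchy identity \eqref{eq:Mac_Cauchy}. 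Because $\Delta_{e_\ell}$ and $\mathbf{\Pi}$ are both diagonal in the basis $\{\widetilde H_\mu\}$ they commute, $\mathbf{\Pi}$ is invertible by \eqref{eq:defPi}, and accounting for the surviving $\Pi_\gamma$ amounts to applying $\mathbf{\Pi}^{-1}$ on the $X$-variables. This rewrites the left-hand side of the theorem as
\[
\frac{1}{1-q^k}\,\Delta_{e_\ell}\,\mathbf{\Pi}^{-1}\, e_m\!\left[X\,\frac{1-q^k}{1-q}\right].
\]

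\emph{Step 2: the plethystic identity.} It then remains to show that $\Delta_{e_\ell}\,\mathbf{\Pi}^{-1}\, e_m[X\tfrac{1-q^k}{1-q}]$ equals $(1-q^k)$ times the right-hand side of the theorem. I would expand $e_m[X\tfrac{1-q^k}{1-q}]=\sum_{r=1}^m \qbinom{k+r-1}{k-1}_q E_{m,r}$ using \eqref{eq:def_Enk} (the ratio $\tfrac{(q^k;q)_r}{(q;q)_r}$ collapsing to that $q$-binomial), and then determine $\Delta_{e_\ell}\,\mathbf{\Pi}^{-1}E_{m,r}$. For $\ell=m$ this is exactly \eqref{eq:Haglund_nablaEnk} rewritten: on $\Lambda^{(m)}$ one has $\nabla=\Delta_{e_m}$ and $\mathbf{\Pi}^{-1}$ commutes with $\nabla$, so $\Delta_{e_m}\mathbf{\Pi}^{-1}E_{m,r}=t^{m-r}(1-q^r)\, h_r[X/(1-q)]\,h_{m-r}[X/M]$; for a general $\ell$ I would obtain the analogous formula by induction, using the Pieri coefficients \eqref{eq:def_cmunu}--\eqref{eq:def_dmunu} and relation \eqref{eq:rel_cmunu_dmunu} to descend from $\ell=m$ and re-expanding one factor $e_{\bullet}[X/M]$ against $h_{\bullet}[X/M]$. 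Substituting back and summing over $r$, the resulting sums of products of $q$-binomial coefficients should collapse to the claimed triple sum by $q$-Vandermonde / $q$-Pfaff--Saalschütz. (The base case $\ell=0$ of this induction is itself the nontrivial plethystic identity underlying the $q,t$-Schröder formula, giving the expansion of $e_m[X\tfrac{1-q^k}{1-q}]$ in the mixed ``Haglund family'' $\{h_s[X/(1-q)]\,e_{m-s}[X/M]\}_s$.)

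\emph{The main obstacle.} Step 1 is the clean conceptual move and is short once the identity $\widetilde H_\gamma[M[k]_q]=(1-q^k)\Pi_\gamma h_k[(1-t)B_\gamma]$ is available; all the difficulty is in Step 2. I expect the bottleneck to be bridging Haglund's formula --- which only controls the top operator $\Delta_{e_m}\mathbf{\Pi}^{-1}E_{m,r}$ --- to the case of a general $\Delta_{e_\ell}$. The point is that $\Delta_{e_\ell}$ is diagonal in the Macdonald basis but \emph{not} in the mixed basis $\{h_s[X/(1-q)]\,e_{m-s}[X/M]\}$ in which the answer is phrased, so controlling its action inductively, and then carrying out the fairly intricate $q$-series manipulations needed to reorganize the triple sum into the form displayed in \eqref{eq:mastereq}, is where the real effort lies.
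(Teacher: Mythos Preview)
The paper does not contain a proof of this statement: it is quoted from \cite{DAdderio-VandenWyngaerd-2017}*{Theorem~3.1} and used as a black box (the authors call it ``the most technical result of \cite{DAdderio-VandenWyngaerd-2017}''). So there is no in-paper argument to compare your outline against.

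On the merits of your outline itself: Step~1 is correct and clean. The identity $\widetilde H_\gamma[M[k]_q]=(1-q^k)\,\Pi_\gamma\, h_k[(1-t)B_\gamma]$ is exactly what the passage from the first to the second line of \eqref{eq:qn_q_Macexp} encodes, and after that the rewriting of the left-hand side as $\tfrac{1}{1-q^k}\,\Delta_{e_\ell}\,\mathbf{\Pi}^{-1}\,e_m\bigl[X\tfrac{1-q^k}{1-q}\bigr]$ is immediate from the Cauchy identity and the diagonal action of $\Delta_{e_\ell}$ and $\mathbf{\Pi}$.

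Step~2, however, is where the entire content of the theorem lives, and what you have written is not yet a proof. You correctly name the obstacle: Haglund's formula \eqref{eq:Haglund_nablaEnk} only controls $\Delta_{e_m}\mathbf{\Pi}^{-1}E_{m,r}=\nabla\mathbf{\Pi}^{-1}E_{m,r}$, not $\Delta_{e_\ell}\mathbf{\Pi}^{-1}E_{m,r}$ for $\ell<m$, and $\Delta_{e_\ell}$ has no simple action on the mixed family $h_s[X/(1-q)]\,e_{m-s}[X/M]$ in which the answer is expressed. Your proposed ``induction using Pieri coefficients to descend from $\ell=m$'' is not specified: you do not say what the inductive hypothesis is, how a Pieri move converts a $\Delta_{e_{\ell+1}}$-formula into a $\Delta_{e_\ell}$-formula, or why the resulting $q$-binomial sums reassemble into the right-hand side of \eqref{eq:mastereq}. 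Note also that even your base case $\ell=0$ does not follow from \eqref{eq:Haglund_nablaEnk} alone, since the right-hand side involves $e_{m-s}[X/M]$ rather than $h_{m-s}[X/M]$; it already requires a nontrivial summation identity of Garsia--Haglund type. In short, your reduction in Step~1 is a genuine simplification, but Step~2 as written is a restatement of the difficulty rather than a strategy for overcoming it.
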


\subsection{The family $F_{n,k;p}^{(d,\ell)}$}

Set
\begin{equation}
F_{n,k;p}^{(d,\ell)} \coloneqq t^{n-k-\ell} \< \Delta_{h_{n-k-\ell}} \Delta_{e_\ell} e_{n+p-d} \left[ X \frac{1-q^k}{1-q} \right], e_p h_{n-d} \>.
\end{equation}
Notice that $F_{n,k;0}^{(d,\ell)}=F_{n,k}^{(d,\ell)}$ of \cite{DAdderio-VandenWyngaerd-2017}*{Section~4}.

The family of plethystic formulae $F_{n,k;p}^{(d,\ell)}$ satisfy the following recursion.
\begin{theorem}
	For $k,\ell,d,p\geq 0$, $n\geq k+\ell$ and $n+p\geq d$, the $F_{n,k;p}^{(d,\ell)}$ satisfy the following recursion: for $n\geq 1$ 
	\begin{equation} \label{eq:reconn}
	F_{n,n;p}^{(d,\ell)}=\delta_{\ell,0}q^{\binom{n-d}{2}}\qbinom{n}{n-d}\qbinom{n+p-1}{p}
	\end{equation}
	and, for $n\geq 1$ and $1\leq k<n$,
	\begin{equation} \label{eq:recogen}
	F_{n,k;p}^{(d,\ell)} = t^{n-\ell-k} \sum_{j=0}^{p} \sum_{s=0}^{k} q^{\binom{s}{2}} \qbinom{k}{s}_q \qbinom{k+j-1}{j}_q F_{n+p-d,s+j;n-\ell-k}^{(n+p-d-\ell,n-d-s)},
	\end{equation}
	with initial conditions
	\begin{equation}
	F_{0,k;p}^{(d,\ell)}=\delta_{k,0}\delta_{p,0}\delta_{d,0}\delta_{\ell,0}\qquad \text{and} \qquad F_{n,0;p}^{(d,\ell)}=\delta_{n,0}\delta_{p,0}\delta_{d,0}\delta_{\ell,0}.
	\end{equation}
\end{theorem}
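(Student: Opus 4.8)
The plan is to check the three ingredients of the statement in turn: the closed value \eqref{eq:reconn} of $F_{n,n;p}^{(d,\ell)}$, the initial conditions, and the recursion \eqref{eq:recogen} for $1\le k<n$. The first two are short. When $k=n$ the operator $\Delta_{h_{n-k-\ell}}=\Delta_{h_{-\ell}}$ is the zero operator unless $\ell=0$, which accounts for the $\delta_{\ell,0}$; for $\ell=0$ one is left with $F_{n,n;p}^{(d,0)}=\langle e_{n+p-d}[X[n]_q],e_ph_{n-d}\rangle$, and I would evaluate this via the plethystic adjunction $\langle e_N[XZ],g[X]\rangle=\langle e_N[X],g[XZ]\rangle=(\omega g)[Z]$, which turns it into $(h_pe_{n-d})[1,q,\dots,q^{n-1}]=h_p[1,q,\dots,q^{n-1}]\cdot e_{n-d}[1,q,\dots,q^{n-1}]=\qbinom{n+p-1}{p}_q\,q^{\binom{n-d}{2}}\qbinom{n}{n-d}_q$, i.e.\ \eqref{eq:reconn}. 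The initial conditions would fall out of the same computation by support considerations: since $h_j=e_j=0$ for $j<0$, $[0]_q=0$, and $e_j[0]=0$ for $j\ge1$, a nonvanishing $F_{0,k;p}^{(d,\ell)}$ (resp.\ $F_{n,0;p}^{(d,\ell)}$) forces $k=p=d=\ell=0$ (resp.\ $n=p=d=\ell=0$), with value $1$ in the surviving case.

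For the recursion I would first bring its left-hand side to explicit form. Writing $N=n+p-d$, expand $e_{N}[X\tfrac{1-q^{k}}{1-q}]$ by \eqref{eq:qn_q_Macexp}, let the diagonal operators $\Delta_{e_\ell}$ and $\Delta_{h_{n-k-\ell}}$ act term by term on the $\widetilde{H}_\mu$, turn the pairing against $e_ph_{n-d}$ into a pairing against $h_{N}$ via \eqref{eq:lem_e_h_Delta} (so $e_p$ becomes the scalar $e_p[B_\mu]$), and use $\langle\widetilde{H}_\mu,h_{N}\rangle=1$; this yields
\[
F_{n,k;p}^{(d,\ell)}=t^{n-k-\ell}(1-q^{k})\sum_{\mu\vdash N}\frac{\Pi_\mu\,e_p[B_\mu]\,h_{n-k-\ell}[B_\mu]\,e_\ell[B_\mu]\,h_k[(1-t)B_\mu]}{w_\mu}.
\]
The same reduction, applied to each summand $F_{n+p-d,\,s+j;\,n-\ell-k}^{(n+p-d-\ell,\,n-d-s)}$ of the right-hand side of \eqref{eq:recogen} — whose data unwind to $t^{p-j}\langle\Delta_{h_{p-j}}\Delta_{e_{n-d-s}}e_{n-k}[X\tfrac{1-q^{s+j}}{1-q}],e_{n-\ell-k}h_{\ell}\rangle$, with inner $e_{n-k}[\cdot]$ again expanded by \eqref{eq:qn_q_Macexp} — rewrites that side as a $q,t$-linear combination of sums $\sum_{\nu\vdash n-k}\tfrac{\Pi_\nu}{w_\nu}(\text{product of }e\text{'s and }h\text{'s at }B_\nu)\,h_{s+j}[(1-t)B_\nu]$. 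Thus \eqref{eq:recogen} becomes an identity between an explicit sum over $\mu\vdash n+p-d$ and an explicit combination of sums over $\nu\vdash n-k$; since these index sets differ, the one available bridge is the master identity \eqref{eq:mastereq}.

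To use it, I would peel the factors $\Pi_\mu$, $e_p[B_\mu]$, $h_{n-k-\ell}[B_\mu]$ off the left-hand sum as the operators $\mathbf{\Pi}$, $\Delta_{e_p}$, $\Delta_{h_{n-k-\ell}}$, leaving $\sum_{\mu\vdash N}\tfrac{\widetilde{H}_\mu}{w_\mu}h_k[(1-t)B_\mu]e_\ell[B_\mu]$, which is precisely the left side of \eqref{eq:mastereq} with $m=N$. Substituting its right side rewrites $F_{n,k;p}^{(d,\ell)}$ as an explicit combination of the scalars $\langle\mathbf{\Pi}\,\Delta_{e_p}\Delta_{h_{n-k-\ell}}(h_{a}[\tfrac X{1-q}]h_{b}[\tfrac XM]e_{c}[\tfrac XM]),\,h_{a+b+c}\rangle$ with $a+b+c=N$; running the same manipulation on the right-hand side of \eqref{eq:recogen} (there \eqref{eq:mastereq} enters with last parameter $0$, since only $h_{s+j}[(1-t)B_\nu]$ occurs) puts it into the analogous form. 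So the crux reduces to a closed evaluation of $\langle\mathbf{\Pi}\,\Delta_{e_p}\Delta_{h_{m}}(h_{a}[\tfrac X{1-q}]h_{b}[\tfrac XM]e_{c}[\tfrac XM]),\,h_{a+b+c}\rangle$ as an explicit $q,t$-rational function — equivalently, of the modified-Macdonald expansion of the mixed plethystic product $h_{a}[\tfrac X{1-q}]h_{b}[\tfrac XM]e_{c}[\tfrac XM]$. I would obtain this by expanding that product in the $\widetilde{H}$-basis through the Pieri rules \eqref{eq:def_cmunu}–\eqref{eq:def_dmunu} (or through \eqref{eq:e_h_expansion}, \eqref{eq:en_expansion} together with the rewriting $h_{a}[\tfrac X{1-q}]=h_{a}[\tfrac{(1-t)X}{M}]$), applying $\mathbf{\Pi}$ and the diagonal operators, taming the resulting $\Pi_\mu$'s with Macdonald–Koornwinder reciprocity \eqref{eq:Macdonald_reciprocity} (and, for the $h_{a}[\tfrac X{1-q}]$ piece, with Haglund's formula \eqref{eq:Haglund_nablaEnk}), and finally pairing via \eqref{eq:lem_e_h_Delta}. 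Feeding this evaluation into both reduced expressions and matching, the substitution $n'=n+p-d$, $k'=s+j$, $p'=n-\ell-k$, $d'=n+p-d-\ell$, $\ell'=n-d-s$ makes every plethystic ingredient agree (so $e_{n'+p'-d'}=e_{n-k}$, $h_{n'-d'}=h_\ell$, $e_{p'}=e_{n-\ell-k}$, $\Delta_{h_{n'-k'-\ell'}}=\Delta_{h_{p-j}}$, $\Delta_{e_{\ell'}}=\Delta_{e_{n-d-s}}$), and what remains is to reconcile the scalar and $q$-binomial prefactors — in particular to absorb the difference between $q^{\binom s2}\qbinom{s+j}{s}_q\qbinom{k+j-1}{s+j-1}_q$ coming from \eqref{eq:mastereq} and $q^{\binom s2}\qbinom{k}{s}_q\qbinom{k+j-1}{j}_q$ in \eqref{eq:recogen} into the factors produced by the second use of \eqref{eq:qn_q_Macexp}.

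The step I expect to be the main obstacle is exactly this last one: finding the modified-Macdonald expansion of $h_{a}[\tfrac X{1-q}]h_{b}[\tfrac XM]e_{c}[\tfrac XM]$ in a form clean enough that, after $\mathbf{\Pi}$ and the diagonal operators have acted and the nested sums have been reorganized through $q$-binomial identities, it lines up with the expansion coming from the other side. Throughout one must also keep the operations in the right order — expand in the $\widetilde{H}$-basis, then apply the diagonal operators, then take the scalar product — because neither $\mathbf{\Pi}$ nor the pairing against $e_ph_{n-d}$ is self-adjoint for the Hall inner product; this is also the only point at which the full force of \eqref{eq:mastereq} is needed.
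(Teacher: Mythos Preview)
Your treatment of \eqref{eq:reconn} and of the initial conditions is fine and essentially matches the paper's.

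For the recursion \eqref{eq:recogen}, however, your plan contains a genuine gap, and the ``main obstacle'' you flag is self-inflicted. After writing
\[
F_{n,k;p}^{(d,\ell)}=t^{n-k-\ell}(1-q^{k})\sum_{\gamma\vdash n+p-d}\frac{\Pi_\gamma}{w_\gamma}\,h_k[(1-t)B_\gamma]\,h_{n-k-\ell}[B_\gamma]\,e_\ell[B_\gamma]\,e_p[B_\gamma],
\]
you peel off $\Pi_\gamma$, $e_p[B_\gamma]$, $h_{n-k-\ell}[B_\gamma]$ as operators and apply \eqref{eq:mastereq} to what remains, i.e.\ with $e_\ell$ playing the role of the $e$-factor. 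This is the wrong pairing: the summation index $j$ then runs up to $\ell$, not to $p$ as in \eqref{eq:recogen}, and you are forced into evaluating $\langle\mathbf{\Pi}\,\Delta_{e_p}\Delta_{h_{m}}(h_a[\tfrac{X}{1-q}]h_b[\tfrac{X}{M}]e_c[\tfrac{X}{M}]),h_{a+b+c}\rangle$ in closed form. You do not carry this out, and there is no reason to expect a clean enough answer to reassemble into the $F$'s on the right of \eqref{eq:recogen}.

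The paper avoids this entirely by choosing the other grouping. Instead of peeling off $h_{n-k-\ell}[B_\gamma]e_\ell[B_\gamma]$, it \emph{absorbs} that product via \eqref{eq:e_h_expansion},
\[
h_{n-k-\ell}[B_\gamma]\,e_\ell[B_\gamma]=\sum_{\mu\vdash n-k}e_{n-\ell-k}[B_\mu]\,\frac{\widetilde H_\mu[MB_\gamma]}{w_\mu},
\]
introducing an auxiliary sum over $\mu\vdash n-k$. Macdonald--Koornwinder reciprocity \eqref{eq:Macdonald_reciprocity} then trades $\Pi_\gamma\widetilde H_\mu[MB_\gamma]$ for $\Pi_\mu\widetilde H_\gamma[MB_\mu]$, after which the inner sum over $\gamma$ is exactly the left-hand side of \eqref{eq:mastereq} specialized at $X=MB_\mu$, now with $e_p$ (not $e_\ell$) in the role of the $e$-factor. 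Substituting the right-hand side of \eqref{eq:mastereq} and using the elementary identity
\[
(1-q^{k})\qbinom{s+j}{s}_q\qbinom{k+j-1}{s+j-1}_q=(1-q^{s+j})\qbinom{k}{s}_q\qbinom{k+j-1}{j}_q
\]
one recognizes each summand, via \eqref{eq:qn_q_Macexp} and \eqref{eq:lem_e_h_Delta}, as $F_{n+p-d,\,s+j;\,n-\ell-k}^{(n+p-d-\ell,\,n-d-s)}$, giving \eqref{eq:recogen} directly. No modified-Macdonald expansion of a triple plethystic product is needed.
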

\begin{proof}
	Because of the operator $\Delta_{h_{n-k-\ell}}$ in the definition of $F_{n,k;p}^{(d,\ell)}$, it is clear that $F_{n,n;p}^{(d,\ell)}=0$ for $\ell\neq 0$. For $\ell=0$, using \eqref{eq:qn_q_Macexp}, we get
	\begin{align*}
		F_{n,n;p}^{(d,0)} &  = \left\langle e_{n+p-d}\left[X\frac{1-q^n}{1-q}\right],h_{n-d}e_p\right\rangle\\
		& = \sum_{\gamma\vdash n+p-d}\frac{\widetilde{H}_\gamma[M[n]_q]}{w_\gamma}e_p[B_\gamma]\\
		\text{(using \eqref{eq:e_h_expansion})}& = h_p[[n]_q] e_{n-d}[[n]_q]\\
		& = \qbinom{n+p-1}{p}_q q^{\binom{n-d}{2}}\qbinom{n}{n-d}_q
	\end{align*}
	where in the last equality we used well-known identities (cf. \cite{Stanley-Book-1999}*{Theorem~7.21.2}). This proves \eqref{eq:reconn}. For $k<n$ we have
	\begin{align*}
		F_{n,k;p}^{(d,\ell)} & =t^{n-\ell-k}\< \Delta_{h_{n-\ell-k}}\Delta_{e_{\ell}} e_{n+p-d}\left[X\frac{1-q^k}{1-q}\right] ,e_ph_{n-d}\> \\
		\text{(using \eqref{eq:qn_q_Macexp}, \eqref{eq:lem_e_h_Delta})}	& = t^{n-\ell-k}(1-q^k)\sum_{\gamma\vdash n+p-d}\frac{\Pi_\gamma}{w_\gamma}h_k[(1-t)B_\gamma]h_{n-\ell-k}[B_\gamma]e_\ell[B_\gamma] e_p[B_\gamma]\\
		\text{(using \eqref{eq:e_h_expansion})}& = t^{n-\ell-k}(1-q^k)\sum_{\gamma\vdash n+p-d}\frac{\Pi_\gamma}{w_\gamma}h_k[(1-t)B_\gamma]e_p[B_\gamma]\sum_{\mu\vdash n-k}e_{n-\ell-k}[B_\mu]\frac{\widetilde{H}_\mu[MB_\gamma]}{w_\mu} \\
		\text{(using \eqref{eq:Macdonald_reciprocity})}& = t^{n-\ell-k}\sum_{\mu\vdash n-k}  e_{n-\ell-k}[B_\mu]\frac{\Pi_\mu}{w_\mu} (1-q^k)\sum_{\gamma\vdash n+p-d}\frac{\widetilde{H}_\gamma[MB_\mu]}{w_\gamma} h_k[(1-t)B_\gamma]e_p[B_\gamma]\\
		\text{(using \eqref{eq:mastereq})}& = t^{n-\ell-k}\sum_{\mu\vdash n-\ell-k+p}  e_{n-\ell-k}[B_\mu]\frac{\Pi_\mu}{w_\mu} (1-q^k)\sum_{j=0}^{p} t^{p-j}\sum_{s=0}^{k}q^{\binom{s}{2}}\qbinom{s+j}{s}_q\\
		& \quad \times  \qbinom{k+j-1}{s+j-1}_q h_{s+j}\left[(1-t)B_\mu \right] h_{p-j}\left[MB_\mu \right] e_{n-d-s}\left[MB_\mu \right]\\
		& = t^{n-\ell-k}\sum_{j=0}^{p} t^{p-j}\sum_{s=0}^{k}q^{\binom{s}{2}}\qbinom{k}{s}_q	\qbinom{k+j-1}{j}_q	(1-q^{s+j}) \\
		& \quad \times \sum_{\mu\vdash n-k}  e_{n-\ell-k}[B_\mu]\frac{\Pi_\mu}{w_\mu} h_{s+j}\left[(1-t)B_\mu \right] h_{p-j}\left[MB_\mu \right] e_{n-d-s}\left[MB_\mu \right] \\
		\text{(using \eqref{eq:qn_q_Macexp})}	& = t^{n-\ell-k}\sum_{j=0}^{p} \sum_{s=0}^{k}q^{\binom{s}{2}}\qbinom{k}{s}_q	\qbinom{k+j-1}{j}_q	 \\
		& \quad \times t^{p-j}\<\Delta_{h_{p-j}} \Delta_{e_{n-d-s}}e_{n-k}\left[X\frac{1-q^{s+j}}{1-q}\right],e_{n-\ell-k} h_{\ell}\>\\
		& = t^{n-\ell-k}\sum_{j=0}^{p} \sum_{s=0}^{k}q^{\binom{s}{2}}\qbinom{k}{s}_q	\qbinom{k+j-1}{j}_q F_{n+p-d,s+j;n-\ell-k}^{(n+p-d-\ell,n-d-s)}.
	\end{align*}
	This proves \eqref{eq:recogen}. The initial conditions are easy to check.
\end{proof}

Iterating the previous recursion we get the following immediate corollary.

\begin{corollary}\label{cor: iterated recursion}
	For $k,\ell,d,p\geq 0$, $n\geq k+\ell$ and $n+p\geq d$, the $F_{n,k;p}^{(d,\ell)}$ satisfy the following recursion: for $n\geq 1$ 
	\begin{equation*}
		F_{n,n;p}^{(d,\ell)}=\delta_{\ell,0}q^{\binom{n-d}{2}}\qbinom{n}{n-d}\qbinom{n+p-1}{p}
	\end{equation*}
	and, for $n\geq 1$ and $1\leq k<n$,
	\begin{align*}
		F_{n,k;p}^{(d,\ell)} & = t^{n-k-\ell} \sum_{j=0}^{p} \sum_{s=0}^{k} q^{\binom{s}{2}} \qbinom{k}{s}_q \qbinom{k+j-1}{j}_q \\ 
		& \times  t^{p-j} \sum_{u=0}^{n-k-\ell} \sum_{v=0}^{s+j} q^{\binom{v}{2}} \qbinom{s+j}{v}_q \qbinom{s+j+u-1}{u}_q F_{n-k,u+v;p-j}^{(d-k+s,\ell-v)},
	\end{align*}
	with initial conditions
	\begin{equation}
	F_{0,k;p}^{(d,\ell)}=\delta_{k,0}\delta_{p,0}\delta_{d,0}\delta_{\ell,0} \qquad \text{and} \qquad F_{n,0;p}^{(d,\ell)}=\delta_{n,0}\delta_{p,0}\delta_{d,0}\delta_{\ell,0}.
	\end{equation}
\end{corollary}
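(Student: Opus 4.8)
The statement is exactly what one obtains by substituting the recursion \eqref{eq:recogen} into itself, so my proof would amount to keeping track of the parameter shifts. \textbf{First}, apply \eqref{eq:recogen} once to $F_{n,k;p}^{(d,\ell)}$ (for $n\geq 1$, $1\leq k<n$), which expresses it as $t^{n-\ell-k}\sum_{j=0}^{p}\sum_{s=0}^{k}q^{\binom{s}{2}}\qbinom{k}{s}_q\qbinom{k+j-1}{j}_q\,F_{n+p-d,\,s+j;\,n-\ell-k}^{(n+p-d-\ell,\,n-d-s)}$. \textbf{Then} apply \eqref{eq:recogen} a second time to each inner polynomial $F_{n',k';p'}^{(d',\ell')}$, where $n'=n+p-d$, $k'=s+j$, $p'=n-\ell-k$, $d'=n+p-d-\ell$, $\ell'=n-d-s$. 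The core of the computation is the set of identities
\[
n'-\ell'-k'=p-j,\qquad n'+p'-d'=n-k,\qquad n'+p'-d'-\ell'=d-k+s,\qquad n'-d'-s'=\ell-s',
\]
by which \eqref{eq:recogen} rewrites $F_{n',k';p'}^{(d',\ell')}$ as $t^{p-j}\sum_{u=0}^{n-\ell-k}\sum_{v=0}^{s+j}q^{\binom{v}{2}}\qbinom{s+j}{v}_q\qbinom{s+j+u-1}{u}_q\,F_{n-k,\,u+v;\,p-j}^{(d-k+s,\,\ell-v)}$, with $u,v$ the new summation indices. Substituting this back and collecting the $t$-powers as $t^{n-\ell-k}\cdot t^{p-j}$ gives precisely the asserted formula; the ranges of $u$ and $v$ carry over verbatim, and the initial conditions are unchanged.

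The only delicate point is that the second application of \eqref{eq:recogen} is literally justified only when $1\leq k'<n'$, so I would dispose of the degenerate summands by hand. If $k'=s+j=0$, the factor $\qbinom{s+j+u-1}{u}_q=\qbinom{u-1}{u}_q$ vanishes for every $u\geq 0$ by the convention $\qbinom{a}{b}_q=0$ for $a<b$, so such a summand contributes nothing to the right-hand side; and in the range $1\leq k<n$ one has $F_{n',0;p'}^{(d',\ell')}=0$ anyway, since its non-vanishing would require $n'=p'=d'=\ell'=0$, forcing $n=k$. If $s+j>n+p-d$, then $(n-k)+(p-j)<d-k+s$, so $F_{n-k,\,u+v;\,p-j}^{(d-k+s,\,\ell-v)}$ has a complete homogeneous factor of negative degree and vanishes, matching $F_{n',k';p'}^{(d',\ell')}=0$. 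The remaining boundary case $s+j=n+p-d$ is the one governed by \eqref{eq:reconn}; here I would check that the right-hand side of \eqref{eq:recogen} evaluated at $k'=n'$ agrees with \eqref{eq:reconn} (equivalently, that \eqref{eq:recogen} in fact remains valid at $k=n$), which is a short verification using the same plethystic identities that prove \eqref{eq:recogen}.

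I anticipate no real obstacle: the substance of the argument is the parameter dictionary $(n,k,p,d,\ell)\mapsto(n+p-d,\,s+j,\,n-\ell-k,\,n+p-d-\ell,\,n-d-s)\mapsto(n-k,\,u+v,\,p-j,\,d-k+s,\,\ell-v)$, and the only subtlety is confirming that the boundary summands of the doubled formula reproduce \eqref{eq:reconn} and the initial conditions rather than clashing with them.
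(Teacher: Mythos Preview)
Your approach is exactly the paper's: the authors give no proof beyond the sentence ``Iterating the previous recursion we get the following immediate corollary,'' and what you have written is precisely the two-fold application of \eqref{eq:recogen} that this sentence means, with the parameter dictionary $(n',k',p',d',\ell')=(n+p-d,\,s+j,\,n-\ell-k,\,n+p-d-\ell,\,n-d-s)$ worked out correctly. In that sense you have already supplied more detail than the paper does, including the identification of the degenerate summands $s+j=0$ and $s+j>n+p-d$; the remaining boundary case $s+j=n+p-d$ (where one must use \eqref{eq:reconn} rather than \eqref{eq:recogen} for the inner term) is flagged but not carried out, which matches the level of rigor in the paper itself.
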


We need a lemma.
\begin{lemma}
	For $k,\ell,d,p\geq 0$, $n\geq k+\ell$ and $n+p\geq d$, we have
	\begin{equation} \label{eq:lemnablaEnk}
	F_{n,k;p}^{(d,\ell)} = \sum_{\gamma\vdash n+p-d}\left.(\mathbf{\Pi}^{-1}\nabla E_{n-\ell,k}[X])\right|_{X=MB_\gamma} \frac{\Pi_\gamma}{w_\gamma}e_{\ell}[B_\gamma]e_p[B_\gamma],
	\end{equation}
	where $\mathbf{\Pi}$ is the operator defined in \eqref{eq:defPi}.
\end{lemma}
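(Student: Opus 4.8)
The plan is to unwind the definition of $F_{n,k;p}^{(d,\ell)}$ plethystically and then recognize the sum over $\gamma$ as the right-hand side of \eqref{eq:lemnablaEnk}. Starting from
\[
F_{n,k;p}^{(d,\ell)} = t^{n-k-\ell} \< \Delta_{h_{n-k-\ell}} \Delta_{e_\ell} e_{n+p-d}\left[X\tfrac{1-q^k}{1-q}\right], e_p h_{n-d}\>,
\]
I would first apply \eqref{eq:qn_q_Macexp} to expand $e_{n+p-d}\!\left[X\tfrac{1-q^k}{1-q}\right]$ in the $\widetilde{H}_\gamma$ basis with $\gamma \vdash n+p-d$, picking up the factor $(1-q^k)\,\Pi_\gamma h_k[(1-t)B_\gamma]$. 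Then $\Delta_{e_\ell}$ contributes $e_\ell[B_\gamma]$ and $\Delta_{h_{n-k-\ell}}$ contributes $h_{n-k-\ell}[B_\gamma]$, while the scalar product $\<\,\cdot\,, e_p h_{n-d}\>$, via \eqref{eq:lem_e_h_Delta} rewritten as $\<\cdot, e_p h_{n-d}\> = \<\Delta_{e_p}\,\cdot\,, h_{n+p-d}\>$, produces $e_p[B_\gamma]$ together with the evaluation $\<\widetilde{H}_\gamma, h_{n+p-d}\> = 1$. This is exactly the same manipulation already carried out in the proof of the preceding theorem (the first three displayed lines there), so no new work is needed; one gets
\[
F_{n,k;p}^{(d,\ell)} = t^{n-k-\ell}(1-q^k)\sum_{\gamma\vdash n+p-d}\frac{\Pi_\gamma}{w_\gamma}\,h_k[(1-t)B_\gamma]\,h_{n-k-\ell}[B_\gamma]\,e_\ell[B_\gamma]\,e_p[B_\gamma].
\]

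The second step is to identify the $\gamma$-independent part of the summand — namely $t^{n-k-\ell}(1-q^k)\,h_k[(1-t)B_\gamma]\,h_{n-k-\ell}[B_\gamma]$ — with the evaluation at $X = MB_\gamma$ of $\mathbf{\Pi}^{-1}\nabla E_{n-\ell,k}[X]$. By Haglund's formula \eqref{eq:Haglund_nablaEnk} applied with $n$ replaced by $n-\ell$ and $k$ kept,
\[
\nabla E_{n-\ell,k} = t^{(n-\ell)-k}(1-q^k)\,\mathbf{\Pi}\, h_k\!\left[\tfrac{X}{1-q}\right] h_{n-\ell-k}\!\left[\tfrac{X}{M}\right],
\]
so $\mathbf{\Pi}^{-1}\nabla E_{n-\ell,k} = t^{n-\ell-k}(1-q^k)\, h_k\!\left[\tfrac{X}{1-q}\right] h_{n-\ell-k}\!\left[\tfrac{X}{M}\right]$. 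Now substitute $X = MB_\gamma$: since $M/M = 1$ we get $h_{n-\ell-k}\!\left[\tfrac{MB_\gamma}{M}\right] = h_{n-\ell-k}[B_\gamma]$, and since $M = (1-q)(1-t)$ we get $h_k\!\left[\tfrac{MB_\gamma}{1-q}\right] = h_k[(1-t)B_\gamma]$. Hence
\[
\left.\bigl(\mathbf{\Pi}^{-1}\nabla E_{n-\ell,k}[X]\bigr)\right|_{X=MB_\gamma} = t^{n-\ell-k}(1-q^k)\,h_k[(1-t)B_\gamma]\,h_{n-\ell-k}[B_\gamma],
\]
which is precisely the prefactor appearing in the sum above. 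Plugging this in converts the expression for $F_{n,k;p}^{(d,\ell)}$ into $\sum_{\gamma\vdash n+p-d}\left.(\mathbf{\Pi}^{-1}\nabla E_{n-\ell,k})\right|_{X=MB_\gamma}\,\tfrac{\Pi_\gamma}{w_\gamma}\,e_\ell[B_\gamma]\,e_p[B_\gamma]$, which is \eqref{eq:lemnablaEnk}.

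The only genuine care-point is bookkeeping of edge cases: one should check the identity degenerates correctly when $k=0$, when $n-k-\ell=0$, or when $\ell=0$ or $p=0$, using the conventions $e_0=h_0=1$, $E_{m,0}$-type boundary behaviour, and the fact that $\nabla E_{n-\ell,k}$ is only defined for $1\le k\le n-\ell$; for $k=0$ the claimed formula should be read against the initial conditions $F_{n,0;p}^{(d,\ell)}=\delta_{n,0}\delta_{p,0}\delta_{d,0}\delta_{\ell,0}$ rather than via \eqref{eq:Haglund_nablaEnk}. I do not expect any serious obstacle: the whole argument is a direct substitution once Haglund's formula \eqref{eq:Haglund_nablaEnk} and the plethystic expansion \eqref{eq:qn_q_Macexp} are in hand, together with the observation that the scalar-product/delta-operator factors $\Delta_{h_{n-k-\ell}}\Delta_{e_\ell}(\cdot)$ and $\<\cdot, e_p h_{n-d}\>$ precisely supply $h_{n-k-\ell}[B_\gamma]$, $e_\ell[B_\gamma]$, $e_p[B_\gamma]$ and leave $\Pi_\gamma/w_\gamma$ behind.
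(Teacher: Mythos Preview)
Your proposal is correct and follows essentially the same route as the paper: expand $e_{n+p-d}[X\tfrac{1-q^k}{1-q}]$ via \eqref{eq:qn_q_Macexp}, read off the eigenvalues $h_{n-k-\ell}[B_\gamma]$, $e_\ell[B_\gamma]$, $e_p[B_\gamma]$ from the delta operators and the scalar product, and then invoke Haglund's formula \eqref{eq:Haglund_nablaEnk} to recognize the remaining factor as $(\mathbf{\Pi}^{-1}\nabla E_{n-\ell,k})\big|_{X=MB_\gamma}$. The paper's written proof (somewhat unnecessarily) carries a summation $\sum_{k=1}^{n-\ell}$ through the same three steps, but each equality there holds term by term in $k$, so your fixed-$k$ presentation is if anything cleaner.
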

\begin{proof}
	We have
	\begin{align*}
		\sum_{k=1}^{n-\ell}F_{n,k;p}^{(d,\ell)}	& =  \sum_{k=1}^{n-\ell}t^{n-\ell-k}\left\langle \Delta_{h_{n-\ell-k}}\Delta_{e_{\ell}}e_{n+p-d}\left[X\frac{1-q^k}{1-q}\right],h_{n-d}e_p\right\rangle\\
		& = \sum_{k=1}^{n-\ell}t^{n-\ell-k}(1-q^k)\sum_{\gamma\vdash n+p-d}\frac{\Pi_\gamma}{w_\gamma}h_k[(1-t)B_\gamma] h_{n-\ell-k}[B_\gamma]e_{\ell}[B_\gamma]e_p[B_\gamma]\\
		& = \sum_{k=1}^{n-\ell} \sum_{\gamma\vdash n+p-d}\left.(\mathbf{\Pi}^{-1}\nabla E_{n-\ell,k}[X])\right|_{X=MB_\gamma} \frac{\Pi_\gamma}{w_\gamma}e_{\ell}[B_\gamma]e_p[B_\gamma],
	\end{align*}
	where in the second equality we used \eqref{eq:qn_q_Macexp} and \eqref{eq:lem_e_h_Delta}, and in the last one we used \eqref{eq:Haglund_nablaEnk}.
\end{proof}

The interest in the polynomials $F_{n,k;p}^{(d,\ell)}$ lies in the following theorem.

\begin{theorem} \label{thm:recoFnkpdl}
	For $\ell,d,p\geq 0$, $n\geq \ell+1$ and $n\geq d$, we have
	$$
	\sum_{k=1}^{n-\ell}F_{n,k;p}^{(d,\ell)}= \< \Delta_{h_p}\Delta_{e_{n-\ell-1}}'e_n, e_{n-d}h_{d} \>.
	$$
\end{theorem}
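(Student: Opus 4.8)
The plan is to collapse the sum over $k$ using the lemma just established, and then to recognise the resulting plethystic expression as the scalar product on the right, using only the identities collected in Section~3.

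First I would sum the identity \eqref{eq:lemnablaEnk} over $k=1,\dots,n-\ell$. Since $\mathbf{\Pi}^{-1}\nabla$ is linear and $\sum_{k=1}^{n-\ell}E_{n-\ell,k}=e_{n-\ell}$ by \eqref{eq:Enk}, this gives at once
\[
\sum_{k=1}^{n-\ell}F_{n,k;p}^{(d,\ell)}=\sum_{\gamma\vdash n+p-d}\left.(\mathbf{\Pi}^{-1}\nabla e_{n-\ell}[X])\right|_{X=MB_\gamma}\frac{\Pi_\gamma}{w_\gamma}\,e_\ell[B_\gamma]\,e_p[B_\gamma],
\]
so the whole theorem reduces to identifying this right-hand side with $\langle \Delta_{h_p}\Delta'_{e_{n-\ell-1}}e_n,e_{n-d}h_d\rangle$.

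Next I would unfold both sides into a common shape. On the left, $\mathbf{\Pi}^{-1}\nabla e_{n-\ell}[X]=\sum_{\mu\vdash n-\ell}\frac{MB_\mu T_\mu}{w_\mu}\widetilde H_\mu[X]$ by \eqref{eq:en_expansion}, \eqref{eq:defPi} and $\nabla\widetilde H_\mu=T_\mu\widetilde H_\mu$; substituting, applying Macdonald--Koornwinder reciprocity \eqref{eq:Macdonald_reciprocity} in the form $\Pi_\gamma\widetilde H_\mu[MB_\gamma]=\Pi_\mu\widetilde H_\gamma[MB_\mu]$, and exchanging the two summations turns the left-hand side into $\sum_{\mu\vdash n-\ell}\frac{MB_\mu T_\mu\Pi_\mu}{w_\mu}\sum_{\gamma\vdash n+p-d}\frac{e_\ell[B_\gamma]e_p[B_\gamma]}{w_\gamma}\widetilde H_\gamma[MB_\mu]$. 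On the right, by \eqref{eq:lem_e_h_Delta} (used with $n-d$ in place of $d$) the target equals $\langle \Delta_{e_{n-d}}\Delta_{h_p}\Delta'_{e_{n-\ell-1}}e_n,h_n\rangle$; expanding $e_n$ via \eqref{eq:en_expansion}, letting the three delta operators act diagonally on the $\widetilde H_\nu$, and using $\langle\widetilde H_\nu,h_n\rangle=1$, it becomes $\sum_{\nu\vdash n}\frac{MB_\nu\Pi_\nu}{w_\nu}\,e_{n-\ell-1}[B_\nu-1]\,h_p[B_\nu]\,e_{n-d}[B_\nu]$.

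What is left is the plethystic identity equating these two sums. The inner $\gamma$-sum should be treated with the Cauchy-type identity \eqref{eq:e_h_expansion}: absorbing $e_p[B_\gamma]$ amounts to applying $\Delta_{e_p}$ to $\sum_\gamma\frac{e_\ell[B_\gamma]\widetilde H_\gamma[X]}{w_\gamma}=h_\ell[X/M]\,e_{n+p-d-\ell}[X/M]$, after which a further use of reciprocity together with the master equation \eqref{eq:mastereq} (with its $\ell$-parameter specialised to $p$) — exactly the chain of manipulations already used in the proof of the recursion for the $F_{n,k;p}^{(d,\ell)}$ above — rewrites the whole left-hand side as a single sum over $\nu\vdash n$; the relation \eqref{eq:deltaprime} is then what identifies the factor produced this way with $e_{n-\ell-1}[B_\nu-1]$. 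For $p=0$ this last identity is, up to one more application of \eqref{eq:e_h_expansion}, the analogous theorem of \cite{DAdderio-VandenWyngaerd-2017}, and the general case only carries along the passive operator $\Delta_{h_p}$ and factor $e_p[B_\gamma]$.

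The step I expect to be the main obstacle is precisely this last matching: the weight $e_\ell[B_\gamma]e_p[B_\gamma]$ is a product of \emph{two} elementary symmetric functions, so it does not collapse under a single application of \eqref{eq:e_h_expansion}, and one has to keep careful track, after each reciprocity move, of which partition indexes which summation and of the $q$-binomial factors introduced by \eqref{eq:mastereq}. Everything else is formal manipulation with the identities recalled in Section~3.
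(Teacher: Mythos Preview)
Your first two paragraphs agree with the paper: summing \eqref{eq:lemnablaEnk} over $k$, collapsing $\sum_k E_{n-\ell,k}=e_{n-\ell}$, expanding $\mathbf{\Pi}^{-1}\nabla e_{n-\ell}$ via \eqref{eq:en_expansion}, and writing the target as $\sum_{\nu\vdash n}\frac{MB_\nu\Pi_\nu}{w_\nu}e_{n-\ell-1}[B_\nu-1]h_p[B_\nu]e_{n-d}[B_\nu]$ are exactly the opening moves of the paper's proof.

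The gap is in your third paragraph. After reciprocity you are left with the inner sum
\[
\sum_{\gamma\vdash n+p-d}\frac{e_\ell[B_\gamma]\,e_p[B_\gamma]}{w_\gamma}\,\widetilde H_\gamma[MB_\mu],
\]
and you propose to close it with the master equation \eqref{eq:mastereq} and the relation \eqref{eq:deltaprime}. Neither tool fits here: \eqref{eq:mastereq} requires a factor $h_k[(1-t)B_\gamma]$, which never appears in this sum, and \eqref{eq:deltaprime} is an operator identity that does not by itself manufacture the factor $e_{n-\ell-1}[B_\nu-1]$ out of data indexed by $\mu\vdash n-\ell$. You also never pass to a sum over partitions of $n$, which is where $\Delta'_{e_{n-\ell-1}}e_n$ lives.

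The paper handles this step differently and without \eqref{eq:mastereq}. Before applying reciprocity, it writes $e_\ell[B_\gamma]=e_\ell\!\left[\tfrac{MB_\gamma}{M}\right]$ and uses the Pieri expansion \eqref{eq:def_dmunu} to absorb this factor into $\widetilde H_\mu[MB_\gamma]$, producing $\sum_{\alpha\supset_\ell\mu}d_{\alpha\mu}^{(\ell)}\widetilde H_\alpha[MB_\gamma]$ with $\alpha\vdash n$. Only then is reciprocity applied (to $\widetilde H_\alpha$), so the $\gamma$-sum now carries the single weight $e_p[B_\gamma]$ and collapses cleanly via \eqref{eq:e_h_expansion} to $h_p[B_\alpha]e_{n-d}[B_\alpha]$. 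The remaining inner sum $\sum_{\mu\subset_\ell\alpha}B_\mu T_\mu c_{\alpha\mu}^{(\ell)}$ is evaluated to $e_{n-\ell-1}[B_\alpha-1]\,B_\alpha$ by \cite{DAdderio-VandenWyngaerd-2017}*{Lemma~5.2}; this lemma, not \eqref{eq:deltaprime}, is what produces the $\Delta'_{e_{n-\ell-1}}$. Your plan is missing both the Pieri step \eqref{eq:def_dmunu}--\eqref{eq:rel_cmunu_dmunu} and this summation lemma.
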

\begin{proof}
	Using \eqref{eq:lemnablaEnk}, we have
	\begin{align*}
		\sum_{k=1}^{n-\ell}F_{n,k;p}^{(d,\ell)}	& = \sum_{k=1}^{n-\ell} \sum_{\gamma\vdash n+p-d}\left.(\mathbf{\Pi}^{-1}\nabla E_{n-\ell,k}[X])\right|_{X=MB_\gamma} \frac{\Pi_\gamma}{w_\gamma}e_{\ell}[B_\gamma]e_p[B_\gamma]\\
		\text{(using \eqref{eq:def_Enk})}& =  \sum_{\gamma\vdash n+p-d}\left.(\mathbf{\Pi}^{-1}\nabla e_{n-\ell}[X])\right|_{X=MB_\gamma} \frac{\Pi_\gamma}{w_\gamma}e_{\ell}[B_\gamma]e_p[B_\gamma]\\
		\text{(using \eqref{eq:en_expansion})} & =  \sum_{\gamma\vdash n+p-d}\sum_{\mu\vdash n-\ell} M B_\mu T_\mu \frac{\widetilde{H}_\mu[MB_\gamma]}{w_\mu} \frac{\Pi_\gamma}{w_\gamma}e_{\ell}[B_\gamma]e_p[B_\gamma]\\
		& =  \sum_{\gamma\vdash n+p-d}\sum_{\mu\vdash n-\ell} \frac{M B_\mu T_\mu }{w_\mu} \frac{\Pi_\gamma}{w_\gamma}e_p[B_\gamma] e_{\ell}\left[\frac{MB_\gamma}{M}\right]\widetilde{H}_\mu[MB_\gamma]\\
		& =  \sum_{\gamma\vdash n+p-d}\sum_{\mu\vdash n-\ell} \frac{M B_\mu T_\mu }{w_\mu} \frac{\Pi_\gamma}{w_\gamma}e_p[B_\gamma] \sum_{\alpha\supset_\ell \mu}d_{\alpha \mu}^{(\ell)} \widetilde{H}_\alpha[MB_\gamma]\\
		\text{(using \eqref{eq:rel_cmunu_dmunu})}& =  \sum_{\alpha\vdash n}\frac{M\Pi_\alpha}{w_\alpha}\sum_{\mu\subset_\ell \alpha}    B_\mu T_\mu   c_{\alpha \mu}^{(\ell)} \sum_{\gamma\vdash n+p-d}e_p[B_\gamma]\frac{\widetilde{H}_\gamma[MB_\alpha]}{w_\gamma}\\
		\text{(using \eqref{eq:e_h_expansion})} & =  \sum_{\alpha\vdash n}\frac{M\Pi_\alpha}{w_\alpha}h_p[B_\alpha]e_{n-d}[B_\alpha]\sum_{\mu\subset_\ell \alpha}    B_\mu T_\mu   c_{\alpha \mu}^{(\ell)} \\
		& =  \sum_{\alpha\vdash n}\frac{M\Pi_\alpha}{w_\alpha}h_p[B_\alpha]e_{n-d}[B_\alpha] e_{n-\ell-1}[B_\alpha-1] B_\alpha \\
		\text{(using \eqref{eq:lem_e_h_Delta})}& = \< \Delta_{h_p}\Delta_{e_{n-\ell-1}}'e_n, e_{n-d}h_{d} \>.
	\end{align*}
	where in the second to last equality we used \cite{DAdderio-VandenWyngaerd-2017}*{Lemma~5.2}.
\end{proof}

\section{Decorated Dyck paths}

\begin{definition}
	The \emph{valleys} of a Dyck path $D\in \mathsf{D}(n)$ are the indices
	\begin{align*}
		\Val(D) \coloneqq & \; \{2\leq i\leq n \mid a_i(D)<a_{i-1}(D)\},
	\end{align*}
	or the vertical steps that are directly preceded by a horizontal step.
\end{definition}

\begin{definition}
	The \emph{peaks} of a Dyck path $D\in \mathsf{D}(n)$ are the indices \[ \Peak(D) \coloneqq \{1\leq i\leq n-1 \mid a_{i+1}(D) \leq a_i(D)\}\cup \{n\}, \] or the vertical steps that are followed by a horizontal step.
\end{definition}

\begin{definition}
	Fix $n,m,a,b\in \mathbb{N}$, $n\geq 1$, $m,a,b\geq 0$. For every Dyck path $D\in \mathsf{D}(n+m)$ with $|\Rise(D)|\geq a$, $|\Peak(D)|\geq b$ and $|\Val(D)|\geq m$ choose three subsets of $\{1,\dots, m+n\}$:
	\begin{enumerate}[(i)]
		\item $\DRise(D)\subseteq \Rise(D)$ (see Definition \ref{def: rise}) such that $\vert \DRise(D) \vert = a$ and decorate the corresponding vertical steps with a $\ast$. 
		\item $\DPeak(D)\subseteq \Peak(D)$ such that $\vert \DPeak(D) \vert = b$ and decorate with a $\textcolor{blue}{\bullet}$ the points joining these vertical steps with the horizontal steps following them. We will call these \emph{decorated peaks}. 
		\item $\ZVal(D)\subseteq \Val(D)$ such that $\vert \ZVal \vert = m$ and $\DPeak(D)\cap \ZVal(D)= \emptyset$. Label the corresponding vertical steps with a zero. These steps will be called \emph{zero valleys}.
	\end{enumerate} 
	We denote the set of these paths by $\DD(m,n)^{\ast a, \circ b}$. See Figure~\ref{fig:pldExample2} for an example. 
\end{definition}

\begin{figure}[ht]	
	\begin{minipage}{.5 \textwidth}
		\begin{center}
			\begin{tikzpicture}[scale=.7]
			
			\draw[step=1.0, gray!60, thin] (0,0) grid (8,8);
			
			\draw[gray!60, thin] (0,0) -- (8,8);
			
			\draw[blue!60, line width=1.6pt] (0,0) -- (0,1) -- (0,2) -- (1,2) -- (1,3) -- (2,3) -- (3,3) -- (3,4) -- (3,5) -- (4,5) -- (4,6) -- (5,6) -- (5,7) -- (5,8) -- (8,8);
			
			\filldraw[fill=blue!60]
			(1,3) circle (4 pt)
			(5,8) circle (4 pt);
			
			\node at (3.5,3.5) {$0$};
			\draw (3.5, 3.5) circle (.4cm);
			\node at (5.5,6.5) {$0$};
			\draw (5.5, 6.5) circle (.4cm);
			\node at (1-1-0.5,1+0.5) {$\ast$};
			\node at (5-2-0.5,4+0.5) {$\ast$};
			\end{tikzpicture}
			\caption{Example of a decorated Dyck path in $\DD(2,4)^{\ast 2, \circ 2}$. }	
			\label{fig:pldExample2}
		\end{center}
	\end{minipage}%
	\begin{minipage}{.5\textwidth}
		\begin{center}
			\ 	\begin{tikzpicture}[scale=.7]
			
			\draw[step=1.0, gray!60, thin] (0,0) grid (8,8);
			
			\draw[gray!60, thin] (0,0) -- (8,8);
			
			\draw[blue!60, line width=1.6pt] (0,0) -- (0,1) -- (0,2) -- (1,2) -- (1,3) -- (2,3) -- (3,3) -- (3,4) -- (3,5) -- (4,5) -- (4,6) -- (5,6) -- (5,7) -- (5,8) -- (8,8);
			
			\node at (.5,.5) {$1$};
			\draw (.5, .5) circle (.4cm);
			\node at (.5,1.5) {$2$};
			\draw (.5, 1.5) circle (.4cm);
			\node at (1.5,2.5) {$6$};
			\draw (1.5,2.5) circle (.4cm); 	
			\node at (3.5,3.5) {$0$};
			\draw (3.5, 3.5) circle (.4cm);
			\node at (3.5,4.5) {$3$};
			\draw (3.5, 4.5) circle (.4cm);
			\node at (4.5,5.5) {$4$};
			\draw (4.5, 5.5) circle (.4cm);
			\node at (5.5,6.5) {$0$};
			\draw (5.5, 6.5) circle (.4cm);
			\node at (5.5,7.5) {$5$};
			\draw (5.5, 7.5) circle (.4cm);
			\node at (1-1-0.5,1+0.5) {$\ast$};
			\node at (5-2-0.5,4+0.5) {$\ast$};
			
			\end{tikzpicture}
			\caption{Path in $\PLD(2,4)^{\ast 2}$ corresponding to the one in Figure~\ref{fig:pldExample2}.}	
			\label{fig:pldExample2-labelled}
		\end{center}
	\end{minipage}
\end{figure}

We define three statistics on $\DD(m,n)^{\ast a, \circ b}$.

The definition of the \emph{area} of a path in $\DD(m,n)^{\ast a, \circ b}$ is the same for a path in $\PLD(m,n)^{\ast a}$ (see Definition \ref{def: area DP}).

\begin{definition} \label{def: dinv unlabelled}
	For $D\in \DD(m,n)^{\ast a, \circ b}$. For $1\leq i \leq n+m$, we say that the pair $(i,j)$ is an inversion if 
	\begin{itemize}
		\item either $a_i(D) = a_j(D)$, $i \not \in \DPeak(D)$, and $j \not\in \ZVal(D)$ (\emph{primary inversion}),
		\item or $a_i(D) = a_j(D) + 1$, $j \not\in \DPeak(D)$, and  $i \not \in \ZVal(D)$ (\emph{secondary inversion}).
	\end{itemize}
	Then we define \[\dinv(D)\coloneqq \# \{0 \leq i < j \leq n+m \mid (i,j) \; \text{is an inversion}. \}\]
\end{definition}

For example, the path in Figure~\ref{fig:pldExample2} has dinv 6: 4 primary and 2 secondary. 

\begin{remark}
	Let $D \in \PLD(m,n)$. We define its \emph{dinv reading word} as the sequence of labels read starting from the ones in the main diagonal going bottom to top, left to right; next the ones in the diagonal $x+y=1$ bottom to top, left to right; then the ones in the diagonal $x+y=2$ and so on. 
	
	One can consider the paths in $\DD(m,n)^{\ast a, \circ b}$ as partially labelled decorated Dyck paths where the reading word is a shuffle of $m$ $0$'s, the string $1, \cdots, n-b$, and the string $n, \cdots n-b+1$. Indeed, given this restriction and the information about the position of the zero labels and considering the $b$ biggest labels to label the decorated peaks, the rest of the labelling is fixed. With regard to this labelling the Definitions \ref{def: dinv unlabelled} and \ref{def: dinv DP} of the dinv coincide.

	For example, the path in Figure~\ref{fig:pldExample2-labelled} is the partially labelled Dyck path corresponding to the decorated Dyck path in Figure~\ref{fig:pldExample2}. Indeed it has dinv reading word $10263405$ which is a shuffle of two $0$'s and the strings $1,2,3,4$ and $6,5$. Its dinv equals 6: 4 primary and 2 secondary. 
\end{remark}

Finally we define a third statistic on this set: the bounce. In order to do this, it is helpful to replace the zero valleys (the vertical step \emph{and} the horizontal step that precedes it) by diagonal steps. We construct the \emph{bounce path} as follows. Place a ball in the origin and send it travelling north. When it hits the beginning of a horizontal or diagonal step it changes direction and always travels parallel to the horizontal or diagonal step of the path in its current column, until it hits the main diagonal. There it turns north again. Repeat this process until the ball arrives at $(m+n,m+n)$. See Figure~\ref{fig: bounce path} for an example. 

Now label the vertical and diagonal steps of the bounce path starting with $i=0$'s and changing the label to $i+1$ each time the bounce path touches the main diagonal. Reading these labels bottom to top, we obtain the \emph{bounce word} of the path, denoted by $b_1(D)\dots b_{m+n}(D)$. See Figure~\ref{fig: bounce path} for an example.  

\begin{definition}
	Let $D\in \DD(m,n)^{\ast a , \circ b}$ and $b_1(D)\dots b_{n+m}(D)$ its bounce word. We define 
	\[\bounce(D)\coloneqq \sum_{i\not \in S}b_i(D), \] where $S$ is the set of the indices $i$ obtained by starting at a decorated peak and tracing a path to the east, parallel to the bounce path until this path hits the bounce path at the end of its $i$-th vertical or diagonal step. 
\end{definition}

For example, the path in Figure \ref{fig: bounce path} has bounce equal to $1$. 
\begin{figure}[!ht]
	\centering
	\begin{tikzpicture}[scale=0.5]
	\draw[step=1.0, gray!60, thin] (0,0) grid (12,12);
	
	\draw[gray!60, thin] (0,0) -- (12,12);
	
	\draw[blue!60, transform canvas={xshift=0.15mm}, transform canvas={yshift=-0.15mm}, line width=1.6pt] (0,0) -- (0,1) -- (0,2) -- (0,3) -- (1,3) -- (1,4) -- (2,4) -- (2,5) -- (3,5) -- (4,5) -- (4,6) -- (4,7) -- (4,8) -- (5,8) -- (6,8) -- (6,9) -- (6,10) -- (7,10) -- (7,11) -- (8,11) -- (8,12) -- (9,12) -- (10,12) -- (11,12) -- (12,12);
	
	\draw[blue!20, transform canvas={xshift=-0.15mm}, transform canvas={yshift=0.15mm}, line width=1.6pt] (0,0) -- (0,1) -- (0,2) -- (0,3) -- (1,4) -- (2,5) -- (3,5) -- (4,6) -- (4,7) -- (4,8) -- (5,8) -- (6,9) -- (6,10) -- (7,11) -- (8,12) -- (9,12) -- (10,12) -- (11,12) -- (12,12);

	\draw[dashed, opacity=0.6, thick] (0,0) -- (0,3) -- (1,4) -- (2,5) -- (3,5) -- (4,6) -- (5,6) -- (6,7) -- (7,8) -- (8,9) -- (9,9) -- (9,12) -- (10,12) -- (11,12) -- (12,12);
	
	\node at (1.5,3.5) {$0$};
	\draw (1.5, 3.5) circle (.4cm);
	\node at (2.5,4.5) {$0$};
	\draw (2.5, 4.5) circle (.4cm);
	\node at (4.5,5.5) {$0$};
	\draw (4.5, 5.5) circle (.4cm);
	\node at (6.5,8.5) {$0$};
	\draw (6.5, 8.5) circle (.4cm);
	\node at (7.5,10.5) {$0$};
	\draw (7.5, 10.5) circle (.4cm);
	\node at (8.5,11.5) {$0$};
	\draw (8.5, 11.5) circle (.4cm);
	
	\node at (-0.5,0.5) {$0$};
	\node at (-0.5,1.5) {$0$};
	\node at (-0.5,2.5) {$0$};
	\node at (-0.5,3.5) {$0$};
	\node at (-0.5,4.5) {$0$};
	\node at (-0.5,5.5) {$0$};
	\node at (-0.5,6.5) {$0$};
	\node at (-0.5,7.5) {$0$};
	\node at (-0.5,8.5) {$0$};
	\node at (-0.5,9.5) {$1$};
	\node at (-0.5,10.5) {$1$};
	\node at (-0.5,11.5) {$1$};
	
	\filldraw[fill=blue!60]
	(4,8) circle (3 pt)
	(6,10) circle (3 pt);
	\end{tikzpicture}
	\caption{Bounce path and bounce word (left) of a path in $\DD(6,6)^{\ast 0, \circ 2}$.} \label{fig: bounce path}
\end{figure}

\subsection{Recursion for $(\dinv, \area)$}

Define the subset \[\DDd(m,n\backslash r)^{\ast a, \circ b}\subseteq \DD(m,n)^{\ast a, \circ b}\] to consist of the paths $D\in \DD(m,n)^{\ast a, \circ b}$ such that \[\#\{1\leq i\leq n+m \mid a_i(D)=0 \,\text{and}\, i \not \in \ZVal(D) \} = r.\]	
We set \[\DDd_{q,t}(m,n\backslash r)^{\ast a, \circ b}\coloneqq \sum_{D\in \DDd(m,n\backslash r)^{\ast a, \circ b}}q^{\dinv(D)}t^{\area(D)}\]

\begin{theorem}\label{thm: recursion pld dinv}
	$\DDd_{q,t}(m,n\backslash r)^{\ast a, \circ b} = F^{(b,a)}_{n,r;m} $ .
\end{theorem}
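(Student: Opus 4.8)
The plan is to prove that the generating functions $\DDd_{q,t}(m,n\backslash r)^{\ast a,\circ b}$ obey the same recursion and initial conditions as the symmetric‑function polynomials $F^{(b,a)}_{n,r;m}$, namely \eqref{eq:reconn} and \eqref{eq:recogen} under the substitution $n\mapsto n$, $k\mapsto r$, $p\mapsto m$, $d\mapsto b$, $\ell\mapsto a$, together with the listed initial conditions, and then to conclude by induction. This is enough: after one iteration of \eqref{eq:recogen} (Corollary~\ref{cor: iterated recursion}) the recursion expresses $F^{(b,a)}_{n,r;m}$ only in terms of the $F$'s whose first index is $n-r<n$ (we are in the regime $1\le r<n$), so \eqref{eq:reconn}, \eqref{eq:recogen} and the initial conditions determine the whole family uniquely; since $F^{(b,a)}_{n,r;m}$ is a solution, any solution coincides with it.

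First I would settle the degenerate and base cases, matching them against \eqref{eq:reconn} and the initial conditions. If $n+m=0$ the only path is empty, so the generating function is $1$ precisely when $a=b=0$, matching $F_{0,0;0}^{(0,0)}=1$. If $n+m\ge 1$, the bottommost vertical step lies at area $0$ and is not a valley, hence it is a step with $a_i(D)=0$ and $i\notin\ZVal(D)$, forcing $r\ge 1$; this matches $F_{n,0;m}^{(b,a)}=\delta_{n,0}\delta_{m,0}\delta_{b,0}\delta_{a,0}$ and $F_{0,r;m}^{(b,a)}=\delta_{r,0}\delta_{m,0}\delta_{b,0}\delta_{a,0}$. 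For the base case $r=n$ all $n$ nonzero‑labelled steps lie at area $0$; since a rise always lies at positive area this forces $\DRise(D)=\emptyset$, i.e.\ $a=0$, matching the factor $\delta_{\ell,0}$ of \eqref{eq:reconn}, and what is left is a path hugging the main diagonal, of $\area=0$. A direct count of $\dinv$ — from which $b$ of the $n$ peaks are decorated (the $b$ largest labels read in decreasing order in the reading word versus the remaining $n-b$ read increasingly, contributing $q^{\binom{n-b}{2}}\qbinom{n}{n-b}_q$) and from the shuffle of the $m$ zero labels among the $n$ nonzero ones with the bottom label nonzero (contributing $\qbinom{n+m-1}{m}_q$) — yields $q^{\binom{n-b}{2}}\qbinom{n}{n-b}_q\qbinom{n+m-1}{m}_q$, which is the $q,t$‑Schröder base case of \cite{Haglund-Schroeder-2004} transported to the decorated, partially labelled setting.

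The heart of the proof is the recursion for $1\le r<n$. Following the pattern of the recursions in \cite{Haglund-Schroeder-2004} and \cite{DAdderio-VandenWyngaerd-2017}, I would decompose $D\in\DDd(m,n\backslash r)^{\ast a,\circ b}$ along its portion on the main diagonal: peel off the $r$ vertical steps with $a_i(D)=0$, $i\notin\ZVal(D)$, together with the $j$ zero valleys lying on the main diagonal (for some $0\le j\le m$, read as diagonal steps), and reinterpret what remains as a smaller decorated partially labelled path. The destroyed area is recorded by the powers of $t$; the factor $\qbinom{r+j-1}{j}_q$ counts the interleaving of the $j$ diagonal steps among the $r$ vertical ones with the bottom step vertical; the sum over $s$ with weight $q^{\binom{s}{2}}\qbinom{r}{s}_q$ governs the interaction of the $r$ peeled vertical steps with the decorated peaks among them and with the steps at area $1$; the sums over $u,v$ (weights $q^{\binom{v}{2}}\qbinom{s+j}{v}_q$ and $\qbinom{s+j+u-1}{u}_q$) play the analogous roles one level higher; and the shifts in the indices — the new counts $n-r$ of nonzero labels, $m-j$ of zero valleys, $a-v$ of decorated rises, $b-r+s$ of decorated peaks, $u+v$ of area‑$0$ steps not among the zero valleys — track how the decorations and labels sitting on or just above the diagonal get absorbed into the decomposition. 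One must then check that the contracted object is a genuine element of $\DDd\bigl(m-j,\,(n-r)\backslash(u+v)\bigr)^{\ast(a-v),\,\circ(b-r+s)}$ and that the map is a weight‑preserving bijection onto the disjoint union of these sets over $j,s,u,v$.

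I expect this last verification to be the main obstacle: showing that $\dinv(D)$ splits as a statistic of the peeled portion plus $\dinv$ of the contracted path, and $\area(D)$ correspondingly, while accounting carefully for the inversions across the cut involving decorated peaks and zero valleys — each of which, by Definition~\ref{def: dinv unlabelled}, is barred from one of the two roles in primary inversions and one of the two roles in secondary inversions — and while confirming that the map is well defined in the borderline cases. Once this decomposition is in hand, $\DDd_{q,t}(m,n\backslash r)^{\ast a,\circ b}$ satisfies \eqref{eq:reconn} and \eqref{eq:recogen} (hence, after one iteration, the index‑decreasing recursion of Corollary~\ref{cor: iterated recursion}) and the initial conditions, so an induction on $n$, with the degenerate cases above, gives $\DDd_{q,t}(m,n\backslash r)^{\ast a,\circ b}=F^{(b,a)}_{n,r;m}$.
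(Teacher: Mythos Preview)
Your approach is essentially the same as the paper's: show that $\DDd_{q,t}(m,n\backslash r)^{\ast a,\circ b}$ satisfies the same recursion and initial conditions as $F^{(b,a)}_{n,r;m}$ by peeling off the steps at height $0$ (and the induced decorations on height-$0$ peaks and height-$1$ rises) and reducing every remaining letter of the area word by $1$. Your interpretation of the summation variables $j,s,u,v$, the resulting indices, and the base case $r=n$ all match the paper's proof.

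One clarification: the decomposition you describe does \emph{not} establish \eqref{eq:recogen} for the combinatorial polynomial. The single-step recursion \eqref{eq:recogen} sends $F_{n,r;m}^{(b,a)}$ to terms of the form $F_{n+m-b,\,s+j;\,n-a-r}^{(n+m-b-a,\,n-b-s)}$, which on the combinatorial side would swap the roles of zero valleys with nonzero steps and of decorated rises with decorated peaks; that is the polyomino recursion of Section~5, not a recursion internal to decorated Dyck paths. What your decomposition actually proves --- and what the paper proves --- is the iterated recursion of Corollary~\ref{cor: iterated recursion}, which lands in $\DDd_{q,t}(m-j,\,n-r\backslash u+v)^{\ast a-v,\,\circ b-(r-s)}$ and decreases $n$. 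Since the iterated recursion together with \eqref{eq:reconn} and the initial conditions still determines the family uniquely, this is all that is needed; just adjust the phrasing so that you claim to verify Corollary~\ref{cor: iterated recursion} directly rather than \eqref{eq:recogen}.
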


\begin{proof}
	We will show that $\DDd_{q,t}(m,n\backslash r)^{\ast a, \circ b}$ satisfies the same recursion and initial conditions as in Corollary \ref{cor: iterated recursion}. In other words we will show that 
	\begin{align*}
		\DDd_{q,t}(m,n\backslash r)^{\ast a, \circ b} &= t^{n-r-a}\sum_{j=0}^m \sum_{s=0}^r q^{\binom{s}{2}} \qbinom{r}{s}_q\qbinom{r+j-1}{j}_q\\ & \times t^{m-j} \sum_{u=0}^{n-r-a}\sum_{v=0}^{s+j}q^{\binom{v}{2}} \qbinom{s+j}{v}_q\qbinom{s+j+u-1}{u}_q \\ & \times \DDd_{q,t}(m-j,n-r\backslash u+v)^{\ast a-v, \circ b-(r-s)}
	\end{align*} 	
	and \[\DDd_{q,t}(m,n\backslash n) = \delta_{a,0} q^{n-b\choose 2} \qbinom{m+n-1}{m}_q \qbinom{n}{b}_q.\]
	
	Let us start with the second identity. The set $\DDd(m,n\backslash n)$ consists of the paths whose area word contains only 0's, indeed any valley that is not at height 0 is preceded by a vertical step that is not a valley. Thus the area must be zero. Furthermore there can be no rises, which explains $\delta_{a,0}$. The dinv between steps that are not zero valleys nor decorated peaks is counted by $q^{n-b\choose 2 }$. The dinv between zero valleys and things that are not zero valleys are counted by $\qbinom{m+n-1}{m}_q$ because we are not allowed to start with a zero valley. Finally, the dinv between peaks and steps that are not zero valleys is taken into account by $\qbinom{n}{b}_q$. 
	
	Now for the recursive step. We give an overview of the combinatorial interpretations of all the variables appearing in this formula. We say that a vertical step of a path is \emph{at height $i$} if its corresponding letter in the area word equals $i$.   
	
	\begin{itemize}
		\item $r$ is the number of zeroes in the area word whose index is not a zero valley. 
		
		\item $r-s$ is the number of decorated peaks at height 0.
		\item The previous two imply that $s$ is the number of zeroes in the area word whose index in not a decorated peak nor a zero valley.
		\item $j$ is the number of zero valleys at height 0.
		\item $v$ is the number of decorated rises at height 1.
		\item $u+v$ is the number of $1$'s in the area word whose index is not a zero valley. 
	\end{itemize}
	
	The strategy of this recursion is the following. Start from a path $D$ in $\DDd(m,n\backslash r)^{\ast a, \circ b}$. Remove all the $0$'s from the area word, and then remove both the corresponding decoration on peaks, and decorations on rises at height one (which are not rises any more). Then decrease all the remaining letters by $1$. In this way we obtain a path in  \[\DDd(m-j,n-r\backslash u+v)^{\ast a-v, \circ b-(r-s)}.\]
	
	Let us look at what happens to the statistics of the path. 
	
	The area goes down by the size (i.e. $m+n$), minus the number of zeroes in the area word (i.e. $r+j$) and the number of rises (i.e. $a$), since these letters did not contribute to the area to begin with. This explains the term $t^{m+n-(r+j+a)}$.   
	
	The factor $q^{\binom{s}{2}}$ takes into account the primary dinv among $0$'s that are neither zero valleys nor decorated peaks. The factor $\qbinom{r}{s}_q$ takes into account the primary dinv among $0$'s that are neither zero valleys nor decorated peaks, and $0$'s that are decorated peaks. Indeed, each time a one of the former precedes one of the latter one unit of primary dinv is created. The factor $\qbinom{r+j-1}{j}_q$ takes into account the primary dinv among $0$'s that are zero valleys and the other $0$'s, where we get $r-1$ because the first $0$ cannot be a zero valley.
	
	The factor $q^{\binom{v}{2}}$ takes into account the secondary dinv among $1$'s that are decorated rises and $0$'s that are directly below a decorated rise. The factor $\qbinom{j+s}{v}_q$ takes into account the secondary dinv among those $1$'s, and the remaining $0$'s that are not decorated peaks. The factor $\qbinom{j+s+u-1}{u}_q$ takes into account the secondary among the remaining $1$'s and the $0$'s that are not decorated peaks, where we get $s-1$ because the first non-decorated peak $0$ must be before the first $1$.
	
	Summing over all the possible values of $j,s,u$ and $v$, we obtain the stated recursion.
\end{proof}

\subsection{Recursion for $(\area, \bounce)$}

\begin{definition}
	A \emph{fall} of a Dyck path is a horizontal step followed by another horizontal step. 
\end{definition}

There exists a natural map, mapping rises into falls. Indeed the joining point of the two vertical steps of a rise is a point where the path vertically crosses a certain diagonal parallel to the main diagonal. Since the path must end at the main diagonal, it must cross the same diagonal horizontally with a fall at least once. We map the rise to the first of such falls: this clearly yields a bijective map. Furthermore, the number of squares between the path an the main diagonal in the row of the decorated rise is equal to the number of squares between the path and the main diagonal in the column of the corresponding decorated fall (see Figure~\ref{fig:rises-falls-correspondence}). Thus, equivalent to the definition of the area in Definition~\ref{def: area DP} is the number of whole squares that lie between the path and the main diagonal, except the ones in the columns containing falls. 

\begin{figure}[!ht]
	\centering
	\begin{tikzpicture}[scale=0.6]
	\draw[gray!60, thin] (0,0) grid (11,11) (0,0) -- (11,11) (-1,1) -- (10,12);
	\draw[blue!60, line width = 1.6pt] (0,0) -- (0,2) -- (1,2) -- (1,3) -- (2,3) -- (2,6) -- (4,6) -- (4,9) -- (5,9) -- (5,10) -- (7,10) -- (7,11) -- (11,11);
	\draw[line width = 1.6pt] (2,3) -- (2,5) (8,11) -- (10,11);
	\fill[pattern=north west lines, pattern color=gray] (2,4) rectangle (4,5) (8,11) rectangle (9,9);
	\draw  (1.5,4.5) node {$\ast$};
	\draw (8.5,11.5) node {$\ast$};
	\end{tikzpicture} 
	\caption{Correspondence between rises and falls.}
	\label{fig:rises-falls-correspondence}
\end{figure}
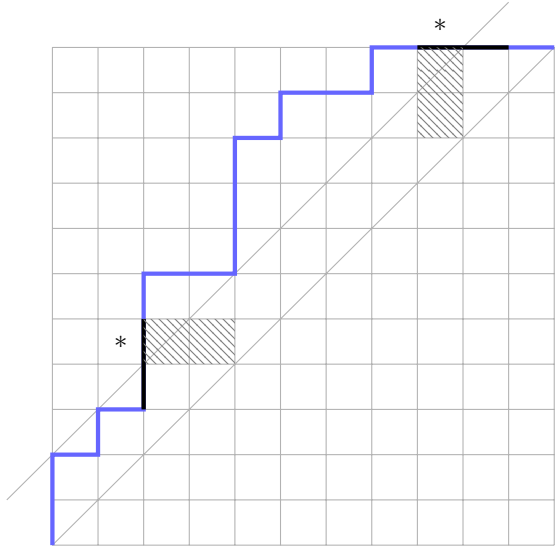

This time, we define the set \[\DDb(m,n\backslash r)^{\ast a, \circ b}\] to consist of the paths $D$ 

\begin{enumerate}[(i)]
	\item with $m$ zero valleys;
	\item $b$ decorated peaks that cannot be zero valleys and where we \emph{do not allow the leftmost peak to be decorated};
	\item $n$ vertical steps that are not zero valleys;
	\item $a$ decorated falls \emph{where we allow the last step of the path to be a decorated fall};
	\item starting with $r$ vertical steps followed by a horizontal step.
\end{enumerate}

We set the polynomial \[\DDb_{q,t}(m,n\backslash r)^{\ast a, \circ b}\coloneqq \sum_{D\in \DDb(m,n\backslash r)^{\ast a, \circ b}}q^{\area(D)}t^{\bounce(D)}\]

\begin{theorem}\label{thm: recursion pld bounce}
	$\DDb_{q,t}(m,n\backslash r)^{\ast b, \circ a} = F^{(a,b)}_{n,r;m} $ 
\end{theorem}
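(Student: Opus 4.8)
The plan is to prove Theorem~\ref{thm: recursion pld bounce} by showing that the polynomials $\DDb_{q,t}(m,n\backslash r)^{\ast b, \circ a}$ satisfy exactly the same recursion and initial conditions as the plethystic family $F^{(a,b)}_{n,r;m}$, which by Theorem~\ref{thm: recursion pld dinv} coincide with $\DDd_{q,t}(m,n\backslash r)^{\ast b, \circ a}$. Concretely, it suffices to establish the base case $\DDb_{q,t}(m,n\backslash n)^{\ast 0,\circ b} = q^{\binom{n-b}{2}}\qbinom{m+n-1}{m}_q\qbinom{n}{b}_q$ (with the $\delta_{a,0}$ coming automatically, since a path starting with $n$ consecutive rises and then falling has no room for a fall once $a>0$ forces a contradiction with $r=n$) together with the iterated recursive step matching Corollary~\ref{cor: iterated recursion} with the substitution $(d,\ell)\leftrightarrow(a,b)$. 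One could alternatively, and perhaps more cleanly, exhibit a statistic-preserving bijection between $\DDd(m,n\backslash r)^{\ast b,\circ a}$ equipped with $(\dinv,\area)$ and $\DDb(m,n\backslash r)^{\ast b,\circ a}$ equipped with $(\area,\bounce)$; since the excerpt advertises a ``statistic preserving bijection between the two interpretations'' as part of Section~4, I expect the actual proof to take the recursive route and defer the bijection to a later subsection, so I will follow the recursive route here.

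First I would pin down the combinatorial meaning of each parameter on the $\DDb$ side, mirroring the bullet list in the proof of Theorem~\ref{thm: recursion pld dinv}. The key geometric fact, already recorded in the excerpt (Figure~\ref{fig:rises-falls-correspondence}), is that decorated rises correspond bijectively to decorated falls in a way that preserves the ``area'' contribution, so that $\area$ counts whole squares between path and diagonal outside the columns of falls. The bounce construction replaces zero valleys by diagonal steps and then bounces; the quantity $r$ = ``number of initial vertical steps before the first horizontal step'' is precisely the length of the first bounce segment (minus the diagonal pieces), which governs how the first iteration of the bounce recursion peels off. I would then set up the standard ``peeling'' bijection: given $D\in\DDb(m,n\backslash r)^{\ast b,\circ a}$, remove the first bounce segment together with everything it determines (the first block of vertical/diagonal steps and the horizontal/diagonal steps in those columns), record where the $m$ zero valleys, $b$ decorated peaks, and $a$ decorated falls fell relative to that block, and obtain a smaller path. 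Tracking $\area$ under this removal yields the power of $q$ (the squares removed are exactly those counted by $\binom{\cdot}{2}$ and the $q$-binomials, since area is now ``horizontal'' via the rises-to-falls correspondence), and tracking $\bounce$ — which is a sum of bounce-word entries with decorated-peak rows excluded — yields the power of $t$ (each completed bounce segment after the first contributes its label, shifted by one). This is where the two $q$-binomial factors $\qbinom{r}{s}_q\qbinom{r+j-1}{j}_q$ and $\qbinom{s+j}{v}_q\qbinom{s+j+u-1}{u}_q$ come from: they encode, respectively, the ways of interleaving decorated peaks / zero valleys among the initial block, and the ways of interleaving decorated falls / subsequent structure in the next block, exactly paralleling the primary/secondary-dinv bookkeeping in Theorem~\ref{thm: recursion pld dinv} but now read off the bounce decomposition.

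I expect the main obstacle to be the precise bookkeeping of the $\bounce$ statistic under the peeling map, in particular verifying that the set $S$ of indices excluded from the bounce sum (the rows reached by tracing east from decorated peaks along the bounce path) transforms correctly: one must check that a decorated peak at ``bounce height $0$'' is removed along with its excluded index, contributing the shift from $b$ to $b-(r-s)$, while a decorated peak at higher bounce levels survives and has its excluded index relabelled consistently with the overall decrement of bounce labels by one. A secondary delicate point is handling the diagonal steps that replace zero valleys: they participate in the bounce path and in the bounce word, so when $j$ of them sit ``at bounce height $0$'' the first bounce segment has length $r+j$ in a suitable sense, which is what produces the $t^{m+n-(r+j+a)}=t^{(n-r-a)+(m-j)}$ split of the $t$-exponent in the iterated recursion. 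Once these two points are settled, summing over all admissible $j,s,u,v$ gives the stated identity, and matching the base case is then a short direct count as in the $\DDd$ case. Since the recursion of Corollary~\ref{cor: iterated recursion} has a unique solution given the initial conditions $F_{0,k;p}^{(d,\ell)}=\delta_{k,0}\delta_{p,0}\delta_{d,0}\delta_{\ell,0}$ and $F_{n,0;p}^{(d,\ell)}=\delta_{n,0}\delta_{p,0}\delta_{d,0}\delta_{\ell,0}$, and these are trivially matched on the $\DDb$ side (an empty path, or one forced to be empty), the theorem follows.
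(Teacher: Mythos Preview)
Your overall strategy --- verify that $\DDb_{q,t}(m,n\backslash r)^{\ast b,\circ a}$ satisfies the recursion of Corollary~\ref{cor: iterated recursion} with $(d,\ell)\mapsto(a,b)$ by peeling off the first bounce segment --- is the same as the paper's. However, there is a genuine technical point you have not confronted, and without it the bookkeeping you outline will not close.

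The definition of $\DDb$ forbids the \emph{leftmost} peak from being decorated. When you delete the first $r+j$ rows and columns and prepend $h$ vertical steps to get the reduced path $D'$, the new leftmost peak of $D'$ is the topmost peak of the deleted portion $P(D)$. If that peak was a decorated peak in $D$, then $D'$ is not in any single $\DDb$ set with the ``no decorated leftmost peak'' convention, and your proposed single recursive term $\DDb_{q,t}(m-j,n-r\backslash u+v)^{\ast b-(r-s),\circ a-v}$ does not account for both possibilities. The paper handles this by first using a $q$-binomial identity (\cite{DAdderio-VandenWyngaerd-2017}*{Lemma~2.13}) to rewrite the inner sum of Corollary~\ref{cor: iterated recursion} in the variable $h=u+v$ as a \emph{two}-term expression, one term with $\circ\,a-v$ and one with $\circ\,a-(v+1)$; combinatorially these correspond to ``last peak of $P(D)$ undecorated'' and ``last peak of $P(D)$ decorated''. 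Until you either perform this rewriting or find an alternative device that absorbs the leftmost-peak constraint, the recursion you wrote cannot be matched term-by-term to the bounce decomposition.

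A smaller point: in $\DDb(m,n\backslash r)^{\ast b,\circ a}$ the superscript $\ast b$ counts decorated \emph{falls} and $\circ a$ counts decorated \emph{peaks}, so in your sketch the roles are swapped --- $r-s$ is the number of decorated falls above the first bounce (governing $b\to b-(r-s)$), while $v$ is the number of decorated peaks above the first bounce (governing $a\to a-v$ or $a-v+1$). Likewise the factor $q^{\binom{s}{2}}\qbinom{r}{s}_q$ records area under the bounce path coming from the choice of which $s$ of the $r$ horizontal steps are \emph{not} under a decorated fall, and $\qbinom{r+j-1}{j}_q$ places the $j$ diagonal (zero-valley) steps among them; the second pair of $q$-binomials then handles the decorated peaks and the $h$ vertical steps above the bounce path. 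With these corrections and the two-term rewriting in place, your plan goes through as in the paper.
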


\begin{proof}
	We will show that $\DDb_{q,t}(m,n\backslash r)^{\ast b, \circ a}$ satisfies the same recursion and initial conditions as in Corollary \ref{cor: iterated recursion}. In other words we will show that 
	\begin{align*}
		\DDb_{q,t}(m,n\backslash r)^{\ast b, \circ a} &= t^{n-r-a}\sum_{j=0}^m \sum_{s=0}^r q^{\binom{s}{2}} \qbinom{r}{s}_q\qbinom{r+j-1}{j}_q\\ & \times t^{m-j} \sum_{u=0}^{n-r-a}\sum_{v=0}^{s+j}q^{\binom{v}{2}} \qbinom{s+j}{v}_q\qbinom{s+j+u-1}{u}_q \\ & \times \DDb_{q,t}(m-j,n-r\backslash u+v)^{\ast b-(r-s), \circ  a-v}.
	\end{align*} 	
	
	and \[\DDb_{q,t}(m,n\backslash n)^{\ast b, \circ a} = \delta_{a,0} q^{n-b\choose 2} \qbinom{m+n-1}{m}_q \qbinom{n}{b}_q.\]
	
	Let us start with the second identity. The set $\DDb_{q,t}(m,n\backslash n)^{\ast b, \circ a}$ consists of the paths starting with $n$ vertical steps followed by only horizontal steps and zero valleys. Since zero valleys cannot be decorated peak and the leftmost peak is not allowed to be decorated, this implies that there can be no decorated peaks, which explains the factor $\delta_{a,0}$. Now think of the decorated falls as their corresponding decorated rises. The area east of the $n$ first vertical steps is counted by $q^{n-b\choose 2} \qbinom{n}{b}$; we use the fact that \emph{any} of these $n$ vertical steps may be a decorated rise, since we allowed the last step of the path to be a fall. Finally, the area west of the rest of the path is counted by $\qbinom{m+n-1}{m}_q$, the interlacing of the zero valleys and the horizontal steps, where we must end with a horizontal step. 
	
	From \cite{DAdderio-VandenWyngaerd-2017}*{Lemma~2.13}, we can deduce (replacing $k$ by $s$, $s$ by $h$ and $h$ by $u$) that 
	
	\begin{align*}
		\qbinom{j+s}{v}_q \qbinom{j+s+u-1}{u}_q & = q^v \qbinom{u+v-1}{v}_q \qbinom{u+s+j-1}{u+v}_q 
		+ \qbinom{u+v-1}{v-1}_q \qbinom{u+j+s}{u+v}_q.
	\end{align*}
	
	Multiplying this equation by $q^{\binom{v}{2}}$ and setting $u= h-v$ we get
	
	\begin{align*}	
		q^{\binom{v}{2}} \qbinom{j+s}{v}_q \qbinom{j+s+u-1}{u}_q &= q^{\binom{v+1}{2}} \qbinom{h-1}{v}_q \qbinom{h-v+s+j-1}{h}_q 
		\\&+ q^{\binom{v}{2}}\qbinom{h-1}{v-1}_q \qbinom{h-v+s+j}{h}_q 
	\end{align*}

	and we see that the second term is just the first where $v$ is replaced by $v-1$. Using this and regrouping some terms, we can rewrite the recursion
	\begin{align*}
		\DDb_{q,t}(m,n\backslash r)^{\ast b, \circ a} &= t^{n-r-a}\sum_{j=0}^m \sum_{s=0}^r q^{s\choose 2 } \qbinom{r}{s}_q\qbinom{r+j-1}{j}_q\\ 
		& \times t^{m-j} \sum_{h=0}^{n-r-a}\sum_{v=0}^{s+j} q^{\binom{v+1}{2}} \qbinom{h-1}{v}_q \qbinom{h-v+s+j-1}{h}_q \\
		& \times (\DDb(m-j,n-r\backslash h)^{\ast b-(r-s) , \circ a-v}\\&+\DDb(m-j,n-r\backslash h)^{\ast b-(r-s) , \circ a-(v-1)}).
	\end{align*} 
	
	As when we defined the bounce, we will think of blank valleys (the vertical step \emph{and} the preceding horizontal step) as diagonal steps. 
	
	Let us again start with a list of combinatorial interpretations for the variables appearing in this formula. 
	
	\begin{itemize}
		\item $r$ is the number of vertical steps in the first column. 
		\item $r-s$ is the number of decorated falls above the first bounce. 
		\item We can conclude from the previous two that $s$ is the number of horizontal steps above the first bounce that are not decorated falls. 
		\item $j$ is the number of zero valleys above the first bounce.
		\item $v$ is the number of decorated peaks above the first bounce.
		\item $h$ is the number of vertical steps in the column where the bounce path first hits the main diagonal. 
	\end{itemize}
	
	The strategy of this recursion is the following. Start from a path $D$ in $\DDb(m,n\backslash r)^{\ast b, \circ a}$. Remove the $r+j$ first rows and columns of the path. The remaining path goes from $(r+j, r+j+h)$ to $(m+n,m+n)$. Add $h$ vertical steps to the beginning of this path an we obtain a Dyck path $D'$ of size $m+n-(r+j)$. See Figure \ref{fig: bouncerec} for an illustration. Call $P(D)$ the part of $D$ that gets deleted (i.e. the path from $(0,0)$ to $(r+j,r+j+h)$).

	\begin{figure}[!ht]
		\begin{center}
			\begin{tikzpicture}[scale=.6]
			\draw[step=1.0,gray,opacity=0.6,thin] (0,0) grid (12,12) (0,0)--(12,12);
			\draw[blue!60, transform canvas={xshift=0.15mm}, transform canvas={yshift=-0.15mm}, line width=1.6pt](0,0)|-(1,4)|-(2,6) (4,8)|-(5,8)(6,9)|-(7,10)(8,11)--(11,11)|-(12,12);
			\draw[blue!60, transform canvas={xshift=0.15mm}, transform canvas={yshift=-0.15mm}, line width=1.6pt](2,6)-|(3,7) -|(4,8)(5,8)-|(6,9)(7,10)-|(8,11); 
			\draw[blue!20, transform canvas={xshift=-0.15mm}, transform canvas={yshift=0.15mm}, line width=1.6pt] (0,0)|-(1,4)|-(2,6)--(4,8) (4,8)|-(5,8)--(6,9)|-(7,10)--(8,11)--(11,11)|-(12,12);
			\draw (3.5,6.5) node {0} (4.5,7.5) node {0} (6.5,8.5) node {0} (8.5,10.5) node {0};
			\draw (3.5,6.5) circle (.4cm) (4.5,7.5) circle (.4cm) (6.5,8.5) circle (.4cm) (8.5,10.5) circle (.4cm);
			\draw[dashed, opacity=0.6, thick](0,0)--(0,4)--(2,4)--(4,6)--(5,6)--(6,7)--(7,7)--(7,10)--(8,11)--(11,11)|-(12,12);
			\draw[gray](6.9,6.9) rectangle (12.4,12.4);
			\draw (1.5, 6.5) node {$\ast$} (4.5,8.5) node {$\ast$} (8.5, 11.5) node {$\ast$};
			\filldraw[fill=blue!60] (1,6) circle (3pt) (11,12) circle (3pt);
			\draw (13,.5) node {0}(13,1.5) node {0}(13,2.5) node {0}(13,3.5) node {0}(13,4.5) node {0}(13,5.5) node {0}(13,6.5) node {0}(13,7.5) node {1}(13,8.5) node {\textcolor{red}{1}}(13,9.5) node {1}(13,10.5) node {1}(13,11.5) node {\textcolor{red}{2}};
			\end{tikzpicture}
		\end{center}	
		\caption{Bounce recursion} \label{fig: bouncerec}	
	\end{figure}
	
	Note that an ambiguous situation arises when the last vertical step of $P(D)$ is part of a decorated peak, since the leftmost peak of $D'$ is never decorated. We solve this by considering two terms for the reduced path, in the first the number of decorated peaks gets reduced by $v$ and the last peak of $P(D)$ is \emph{never} decorated and in the second the number of decorated peaks gets reduced by $v-1$ and the last peak of $P(D)$ is \emph{always} decorated. So $D'$ is a path either in $\DDb(m-j,n-r\backslash h)^{\ast b-(r-s) , \circ a-v}$ or in $\DDb(m-j,n-r\backslash h)^{\ast b-(r-s) , \circ a-(v-1)}$.
	
	Going from $D$ to $D'$, the bounce goes down by the size (i.e. $m+n$) of the path, minus the number of zeroes in the bounce word (i.e. $r+j$) and the number of decorated peaks of $D$ (i.e. $a$), since these letters of the bounce word did not contribute to the bounce to begin with. This explains the term $t^{n+m-(r+j+a)}$. 
	
	The area goes down by the number of squares under $P(D)$ contributing to the area of $D$. 
	
	First consider the whole squares under the bounce path. There are $r$ horizontal steps in this section of the bounce path. Since there are no vertical steps the $(r-i)$-th horizontal step of the bounce path has exactly $i-1$ squares of area under it (and above the main diagonal). Some of these however, do not contribute to area because they are under a decorated fall. Since $r-s$ is the number of decorated falls over the first bounce, $s$ is the number of horizontal steps in the first bounce of the bounce path that do not lie under decorated falls. So choosing $s$ different values between $0$ and $r-1$ yields the term $q^{\binom{s}{2}} \qbinom{r}{s}_q$. The positioning of the $j$ diagonal steps (which cannot lie under falls) among $r-1$ horizontal steps (indeed the last step must be horizontal) explains the term $\qbinom{r+j-1}{j}_q$.
	
	Next, consider the area under $P(D)$ and above the bounce path. Choosing which of the $h$ vertical steps of $P(D)$ are decorated peaks and inserting the vertical step that follows it gives a factor of $q^{v+1\choose 2}\qbinom{h-1}{v}_q$, indeed we require that the last vertical step is not a peak. Finally, we need to choose an interlacing between the $s+j-v$ horizontal steps that are not decorated falls and do not follow a decorated peak \emph{or} diagonal steps and the $h$ vertical steps that are not decorated peaks, which gives $\qbinom{h-v+s+j-1}{h}_q$ because the first step cannot be vertical.

\end{proof}

\subsection{A statistics swapping bijection}

Let $\DDb(m, n \backslash r)^{\triangle b, \circ a}$ be the set of decorated Dyck paths of size $m+n$ with $m$ zero valleys, $a$ decorated peaks, and $b$ decorated \emph{fake falls}, where by fake fall we mean any horizontal step that is not in the same column of a zero valley, and that is immediately followed by either another horizontal step or a vertical step that is both a zero valley and a peak. The last horizontal step (if it is not in the same column of a zero valley) is also a fake fall. We can define statistics $\area$ and $\bounce$ on this set as usual, not counting the squares below decorated fake falls, and computing the bounce the same way we did for $\DDb(m, n \backslash r)^{\ast b, \circ a}$. We have the following result.

\begin{theorem}
	There is a bijection between $\DDd(m, n \backslash r)^{\ast a, \circ b}$ and $\DDb(m, n \backslash r)^{\triangle b, \circ a}$ mapping $(\dinv, \area)$ to $(\area, \bounce)$.
\end{theorem}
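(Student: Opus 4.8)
The plan is to exhibit the bijection explicitly as a decorated, partially labelled refinement of Haglund's $\zeta$ (sweep) map, following the strategy used in \cite{DAdderio-VandenWyngaerd-2017} for the $q,t$-Schröder case, and then to check that the resulting map intertwines the two recursions of this section. Given $D\in\DDd(m,n\backslash r)^{\ast a,\circ b}$, first contract every zero valley---the vertical step together with the horizontal step preceding it---to a single diagonal step, so that $D$ becomes a lattice path with $n$ genuine vertical steps (carrying the $a$ decorated rises and $b$ decorated peaks) and $m$ diagonal steps, weakly above the main diagonal; one runs $\zeta$ on this contracted object and re-expands the diagonal steps into zero valleys at the end. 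The image $\zeta(D)$ is the path whose bounce structure is read off the area word of $D$: the $j$-th bounce block has length equal to the number of letters of $a(D)$ equal to $j-1$ (a letter belonging to a contracted zero valley producing a diagonal step of $\zeta(D)$ in that block), and inside each block the horizontal and vertical steps are interlaced exactly so as to record the primary and secondary inversions of $D$ among the letters at consecutive heights. This is the standard mechanism by which $\zeta$ turns $\dinv(D)$ into $\area(\zeta(D))$ and $\area(D)$ into $\bounce(\zeta(D))$.

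It remains to transfer the decorations. A decorated peak of $D$ is an index that $\dinv$ skips, and $\area(\zeta(D))=\dinv(D)$, so the $b$ decorated peaks of $D$ must become $b$ horizontal steps of $\zeta(D)$ skipped by $\area$---that is, the $b$ decorated fake falls of $\DDb(m,n\backslash r)^{\triangle b,\circ a}$; dually, a decorated rise of $D$ is a row that $\area(D)$ skips, and $\bounce(\zeta(D))=\area(D)$, so the $a$ decorated rises of $D$ must become $a$ vertical steps of $\zeta(D)$ skipped by $\bounce$, i.e. $a$ decorated peaks. The only position where the peak-to-fall transfer is not forced is the leftmost peak of $\zeta(D)$ (dually, its last horizontal step), and the conventions ``the leftmost peak is never decorated'' and ``the last step is allowed to be a decorated fall'' built into $\DDb(m,n\backslash r)^{\triangle b,\circ a}$ are exactly what makes the transfer well defined; this is the combinatorial shadow of the two-term splitting in the proof of Theorem~\ref{thm: recursion pld bounce}, and is precisely why ``fake falls'' rather than ordinary falls appear in the target set.

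To prove that $\zeta$ is a statistic-swapping bijection I would induct on $m+n$, matching it against Theorems~\ref{thm: recursion pld dinv} and~\ref{thm: recursion pld bounce}. On the source side, removing from $a(D)$ the $r+j$ letters equal to $0$, erasing the $r-s$ decorations on peaks at height $0$ and the $v$ decorations on rises at height $1$, and decrementing the rest (the recursive step of Theorem~\ref{thm: recursion pld dinv}) corresponds under $\zeta$ to deleting the first $r+j$ rows and columns of $\zeta(D)$ and prepending $u+v$ vertical steps, which is precisely the recursive step in the proof of Theorem~\ref{thm: recursion pld bounce}. Thus the summation indices $j,s,u,v$ on the $\DDd$ side (zero valleys at height $0$; zeros that are neither valleys nor decorated peaks; ones that are not zero valleys; decorated rises at height $1$) match term by term the indices on the $\DDb^{\triangle}$ side (zero valleys above the first bounce; non-fall zeros; vertical steps in the rebound column; decorated peaks above the first bounce), the $q$-weight of $\dinv$ maps to that of $\area$, and the $t$-weight of $\area$ maps to that of $\bounce$; the base case $r=n$ is the common explicit product $\delta_{a,0}\,q^{\binom{n-b}{2}}\qbinom{m+n-1}{m}_q\qbinom{n}{b}_q$, on which the bijection is read off directly, so induction concludes. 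I expect the main obstacle to be controlling the decorated peaks through $\zeta$: they are not preserved as peaks but converted into fake falls, and one must check both that the transfer is unambiguous once the leftmost-peak/last-step convention is fixed and that it is compatible with the recursive peeling---in particular with the contracted zero valleys and with how decorations sit relative to the first bounce---the rest being the now-routine verification that $\zeta$ respects the area word and the inversion statistics.
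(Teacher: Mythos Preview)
Your construction is the right one---contracting zero valleys to diagonal steps and running the sweep map on the area word is exactly what the paper does, and the decoration transfer (decorated peaks $\to$ decorated fake falls, decorated rises $\to$ decorated peaks) matches the paper's as well.

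The approach to verification, however, differs from the paper and has a genuine gap. The paper proves the bijection \emph{directly}: it checks that the bounce word of the image is an anagram of the area word of the source (so $\area\mapsto\bounce$), and that the column of area under each horizontal step of the image equals the primary $\dinv$ on the left plus the secondary $\dinv$ on the right of the corresponding letter (so $\dinv\mapsto\area$), with decorated peaks becoming exactly the fake falls whose columns are skipped. No induction is used.

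Your inductive argument, by contrast, rests on matching against Theorem~\ref{thm: recursion pld bounce}, but that theorem is about $\DDb(m,n\backslash r)^{\ast b,\circ a}$ (ordinary decorated falls), not $\DDb(m,n\backslash r)^{\triangle b,\circ a}$ (fake falls). You have conflated the two: the conventions ``leftmost peak never decorated'' and ``last step may be a decorated fall'' that you quote are the defining conventions of the $\ast$-set, not the $\triangle$-set. The $\triangle$-set is introduced precisely because the sweep map does \emph{not} land in the $\ast$-set (a decorated peak in the source can become a horizontal step followed by a zero-valley-peak, which is not an ordinary fall), and no recursion for $\DDb^{\triangle}$ is established in the paper. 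So to make your induction go through you would first have to prove a separate recursion for the $\triangle$-enumerator---essentially redoing Theorem~\ref{thm: recursion pld bounce} with fake falls in place of falls---and then check that the sweep map intertwines the two recursive decompositions. That is doable but is more work than the paper's direct verification, and your sketch does not address it.
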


This map generalizes the classical sweep map on Dyck paths.

\begin{proof}
	Let $D \in \DDd(m, n \backslash r)^{\ast a, \circ b}$, and consider its area word. We draw a Dyck path as follows. We scan the area word left to right and draw a vertical step for each $0$ that is not a zero valley. Then, for $i=0,1,2,\dots$, we scan the area word again and draw a horizontal step for each $i$ that is not a zero valley, a vertical step followed by a horizontal step for each $i$ that is a zero valley, and a vertical step for each $i+1$ that is not a zero valley. We repeat the procedure until there are no more letters in the area word. It is easy to check that the resulting path is a Dyck path. See Figure~\ref{fig:sweep} for an example.
	
	By construction, zero valleys are mapped to a subset of the peaks (since we drew a peak every time we scanned one). We put a zero valley in the first vertical step of each column containing one of the peaks that came from a zero valley; this way, the number of zero valleys is preserved.
	
	We also have that peaks that are not zero valleys are mapped into fake falls. In fact we have such a peak whenever we scan two $i$'s with no $i+1$ in between, and the first $i$ is not a zero valley. In the image, those peaks correspond to horizontal steps followed either by another horizontal step (if the second $i$ is not a zero valley), or by a vertical step that is both a zero valley and a peak (if the second $i$ is a zero valley). The topmost, rightmost peak correspond to the last horizontal step (there is no $i+1$), which is also a fake fall. Whenever such a peak is decorated, we decorate the corresponding fake fall in the image.
	
	Finally, rises are mapped into valleys. In fact we have a rise whenever we scan an $i$ followed by an $i+1$, which means that we draw a horizontal step (possibly preceded by a vertical step if the $i$ is a zero valley) followed by a vertical step, which is exactly the definition of valley. Whenever such a rise is decorated, we decorate the peak in the same column as the corresponding valley in the image. It means that the leftmost peak is never decorated, since it doesn't correspond to a valley.
	
	The process is straightforward to revert, since the area word of the starting path is completely determined by the interlacing between horizontal and vertical steps of the image, and the same is true for the position of the decorations. Hence the map is bijective.
	
	We now have to prove that the statistics are preserved. It is an easy check that the bounce word of the image is an anagram of the area word of the starting Dyck path, where the letters that are not zero valleys correspond to vertical steps, and the zero valleys correspond to diagonal steps. Furthermore, if a letter is a decorated rise, the corresponding letter of the bounce word of the image will be cancelled by the corresponding decorated peak. 
	
	The area below each horizontal step in the image is exactly the primary dinv on the right and the secondary dinv on the left given by the corresponding letter of the area word of the starting path. In fact, the height of that horizontal step with respect to the diagonal is the number of vertical steps before it minus the number of horizontal steps before it, which is the number of $i$'s in the original area word on its left that are not zero valleys, plus the number of $i+1$'s in the original area word on its right that are not zero valleys. The primary dinv is the area below the bounce path (i.e. the number of $i$'s in the original area word on its left that are not zero valleys), the secondary is the area above it (i.e. the number of $i+1$'s in the original area word on its right that are not zero valleys - the area in a square crossed diagonally by the bounce path counts as above). 
	If that letter is a decorated peak, then the corresponding horizontal step will be a decorated fake fall, so the dinv in the preimage and to the area in the image drop by the same amount.
	
	The thesis follows.
\end{proof}

\begin{figure}[!ht]
	\begin{minipage}{0.5\textwidth}
		\begin{center}
			\begin{tikzpicture}[scale=0.5]
			\draw[gray!60, thin] (0,0) grid (12,12);
			
			\draw[gray!60, thin] (0,0) -- (12,12);
			
			\draw[blue!60, line width= 1.6pt] (0,0) -- (0,1) -- (0,2) -- (1,2) -- (1,3) -- (2,3) -- (2,4) -- (3,4) -- (4,4) -- (4,5) -- (5,5) -- (5,6) -- (6,6) -- (6,7) -- (7,7) -- (7,8) -- (8,8) -- (8,9) -- (8,10) -- (8,11) -- (9,11) -- (10,11) -- (11,11) -- (11,12) -- (12,12);
			
			\draw
			(6.5,6.5) node {$0$} circle (.4cm)
			(7.5,7.5) node {$0$} circle (.4cm)
			(1.5,2.5) node {$0$} circle (.4cm)
			(11.5,11.5) node {$0$} circle (.4cm);
			
			\draw
			(0,1.5) node[left, black] {$\ast$}
			(8,10.5) node[left, lime] {$\ast$};
			
			\filldraw[fill=cyan] (5,6) circle (3pt);
			\filldraw[fill=red]	(2,4) circle (3pt);
			
			\node at (13, 0.5) {$0$};
			\node at (13, 1.5) {$1$};
			\node at (13, 2.5) {$1$};
			\node at (13, 3.5) {$1$};
			\node at (13, 4.5) {$0$};
			\node at (13, 5.5) {$0$};
			\node at (13, 6.5) {$0$};
			\node at (13, 7.5) {$0$};
			\node at (13, 8.5) {$0$};
			\node at (13, 9.5) {$1$};
			\node at (13, 10.5) {$2$};
			\node at (13, 11.5) {$0$};
			
			\end{tikzpicture}
		\end{center}
	\end{minipage}%
	\begin{minipage}{0.5\textwidth}
		\begin{center}
			\begin{tikzpicture}[scale=0.5]
			\draw[step=1.0,gray,opacity=0.6,thin] (0,0) grid (12,12) (0,0)--(12,12);
			
			\draw[blue!60, transform canvas={xshift=0.15mm}, transform canvas={yshift=-0.15mm}, line width=1.6pt](0,0)|-(1,4)|-(2,6)-|(3,7)-|(4,8)-|(5,8)-|(6,9)|-(7,10)-|(8,11)--(11,11)|-(12,12);
			
			\draw
			(3.5,6.5) node {0} circle (.4cm)
			(4.5,7.5) node {0} circle (.4cm)
			(6.5,8.5) node {0} circle (.4cm)
			(8.5,10.5) node {0} circle (.4cm);
			
			\draw[dashed, opacity=0.6, thick] (0,0) -- (0,4) -- (2,4) -- (4,6) -- (5,6) -- (6,7) -- (7,7) -- (7,10) -- (8,11) -- (11,11) |- (12,12);
			
			\draw
			(2.5,6) node[above, red] {$\ast$}
			(9.5,11) node[above, cyan] {$\ast$};
			
			\filldraw[fill=black] (1,6) circle (3pt);
			\filldraw[fill=lime] (11,12) circle (3pt);
			\end{tikzpicture}
		\end{center}
	\end{minipage}	
	\caption{A path $D \in \DDd(m, n \backslash r)^{\ast a, \circ b}$ with its area word shown (left) and its image (right).} 
	\label{fig:sweep}
\end{figure}

\section{Doubly decorated polyominoes}

In \cite{DAdderio-Iraci-polyominoes-2017}*{Section~3} the authors define decorated reduced parallelogram polyominoes. 

\begin{definition}[\cite{DAdderio-Iraci-polyominoes-2017}*{Definition~3.1}] 
	\label{def:reducedpolyominoes}
	A \emph{reduced polyomino} of size $m \times n$ is a pair of lattice paths from $(0,0)$ to $(m,n)$ using only north and east steps, such that the first one (the \emph{red path}) lies always weakly above the second one (the \emph{green path}).
\end{definition}

The set of reduced polyominoes of size $m \times n$ is denoted by $\RP(m,n)$. Reduced polyominoes are encoded by their area word.

\begin{definition}
	An \emph{area word} is a (finite) string of symbols $a_1 a_2 \cdots a_n$ in a well-ordered alphabet such that if $a_i < a_{i+1}$ then $a_{i+1}$ is the successor of $a_i$ in the alphabet.
\end{definition}

The area word of a reduced polyomino is an area word in the alphabet $\overline{\mathbb{N}} \coloneqq 0 < \bar{0} < 1 < \bar{1} < 2 < \dots$ starting with $0$.

It is computed in the following way. The first step consists of drawing a diagonal of slope $-1$ from the end of every horizontal green step, and attaching to that step the length of that diagonal (i.e. the number of squares it crosses, that can also be zero). Then, one puts a dot in every square not crossed by any of those diagonals, and attaches to each vertical red step the number of dots in the corresponding row. Next, one bars the numbers attached to vertical red steps. The area word starts (artificially) with a $0$, then one reads those numbers following the diagonals of slope $-1$, writing down the labels when encountering the end of its step and the red label before the green one. The correspondence is bijective. See Figure~\ref{fig:aw-reduced-polyomino} for an example.

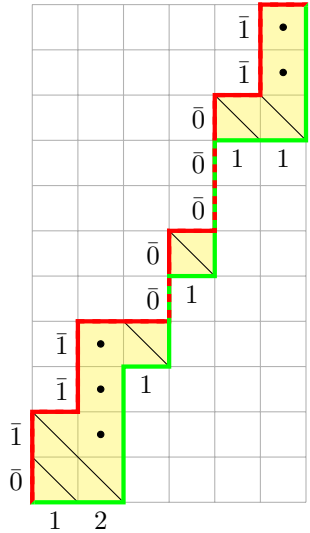
\begin{figure}[!ht]
	\centering
	\begin{tikzpicture}[scale=0.6]
	\draw[gray!60, thin] (0,0) grid (6,11);
	
	\filldraw[yellow, opacity=0.3] (0,0) -- (1,0) -- (2,0) -- (2,1) -- (2,2) -- (2,3) -- (3,3) -- (3,4) -- (3,5) -- (4,5) -- (4,6) -- (4,7) -- (4,8) -- (5,8) -- (6,8) -- (6,9) -- (6,10) -- (6,11) -- (5,11) -- (5,10) -- (5,9) -- (4,9) -- (4,8) -- (4,7) -- (4,6) -- (3,6) -- (3,5) -- (3,4) -- (2,4) -- (1,4) -- (1,3) -- (1,2) -- (0,2) -- (0,1) -- (0,0);
	
	\draw
	(1,0) -- (0,1)
	(2,0) -- (0,2)
	(3,3) -- (2,4)
	(4,5) -- (3,6)
	(5,8) -- (4,9)
	(6,8) -- (5,9);
	
	\filldraw[fill=black]
	(1.5,1.5) circle (2pt)
	(1.5,2.5) circle (2pt)
	(1.5,3.5) circle (2pt)
	(5.5,9.5) circle (2pt)
	(5.5,10.5) circle (2pt);
	
	\node[below] at (0.5,0) {$1$};
	\node[below] at (1.5,0) {$2$};
	\node[below] at (2.5,3) {$1$};
	\node[below] at (3.5,5) {$1$};
	\node[below] at (4.5,8) {$1$};
	\node[below] at (5.5,8) {$1$};
	
	\node[left] at (0,0.5) {$\bar{0}$};
	\node[left] at (0,1.5) {$\bar{1}$};
	\node[left] at (1,2.5) {$\bar{1}$};
	\node[left] at (1,3.5) {$\bar{1}$};
	\node[left] at (3,4.5) {$\bar{0}$};
	\node[left] at (3,5.5) {$\bar{0}$};
	\node[left] at (4,6.5) {$\bar{0}$};
	\node[left] at (4,7.5) {$\bar{0}$};
	\node[left] at (4,8.5) {$\bar{0}$};
	\node[left] at (5,9.5) {$\bar{1}$};
	\node[left] at (5,10.5) {$\bar{1}$};
	
	\draw[red, sharp <-sharp >, sharp angle = -45, line width=1.6pt] (0,0) -- (0,1) -- (0,2) -- (1,2) -- (1,3) -- (1,4) -- (2,4) -- (3,4) -- (3,5) -- (3,6) -- (4,6) -- (4,7) -- (4,8) -- (4,9) -- (5,9) -- (5,10) -- (5,11) -- (6,11);
	
	\draw[green, sharp <-sharp >, sharp angle = 45, line width=1.6pt] (0,0) -- (1,0) -- (2,0) -- (2,1) -- (2,2) -- (2,3) -- (3,3) -- (3,4) -- (3,5) -- (4,5) -- (4,6) -- (4,7) -- (4,8) -- (5,8) -- (6,8) -- (6,9) -- (6,10) -- (6,11);
	
	\draw[red, dashed, sharp <-sharp >, sharp angle = -45, line width=1.6pt] (0,0) -- (0,1) -- (0,2) -- (1,2) -- (1,3) -- (1,4) -- (2,4) -- (3,4) -- (3,5) -- (3,6) -- (4,6) -- (4,7) -- (4,8) -- (4,9) -- (5,9) -- (5,10) -- (5,11) -- (6,11);
	\end{tikzpicture}
	
	\caption{A $6 \times 11$ reduced polyomino. Its area word is $0 \bar{0} 1 \bar{1} 2 \bar{1} \bar{1} 1 \bar{0} \bar{0} 1 \bar{0} \bar{0} \bar{0} 1 1 \bar{1} \bar{1}$.}
	\label{fig:aw-reduced-polyomino}
\end{figure}

\begin{remark}
	The area word of a reduced polyomino of size $m \times n$ has $m+1$ unbarred letters (including the starting $0$) and $n$ barred letters.
\end{remark}

In \cite{DAdderio-Iraci-polyominoes-2017}*{Section~3} two kind of decorations were introduced. We introduce now two more, together with more general sets of decorated reduced polyominoes. 

\begin{definition}[\cite{DAdderio-Iraci-polyominoes-2017}*{Definition~3.3}] 
	The \emph{unbarred rises} of a reduced polyomino $P$ are the letters of the area word that are unbarred, and that are the successor, in the alphabet $\overline{\mathbb{N}}$, of the letter immediately to their left.
\end{definition}

\begin{definition}
	The \emph{barred rises} of a reduced polyomino $P$ are the letters of the area word that are barred, and that are the successor, in the alphabet $\overline{\mathbb{N}}$, of the letter immediately to their left.
\end{definition}

We denote by $\RP(m,n)^{\ast k, j}$ the set of reduced polyominoes of size $m \times n$ with $k$ decorated unbarred rises and $j$ decorated barred rises.

\begin{definition}[\cite{DAdderio-Iraci-polyominoes-2017}*{Definition~3.5}] 
	The \emph{green peaks} of a reduced polyomino $P$ are the horizontal green steps that immediately follow a vertical green step.
\end{definition}

\begin{definition}
	The \emph{red valleys} of a reduced polyomino $P$ are the vertical red steps that immediately follow a horizontal red step. The first red step counts as a           valley if it is vertical.
\end{definition}

We denote by $\RP(m,n)^{\circ k, j}$ the set of reduced polyominoes of size $m \times n$ with $k$ decorated green peaks and $j$ decorated red valleys. We also identify $\RP(m,n) = \RP(m,n)^{\ast 0, 0} = \RP(m,n)^{\circ 0, 0}$. 

Let us fix some more notation.
\begin{align}
	&\RP(m,n) && \coloneqq \; \; \{ P \mid P \; \text{is a $m \times n$ reduced polyomino} \} \\
	&\RP(m \backslash r, n)^{\ast 0, 0} && \coloneqq \; \; \{ P \in \RP(m,n) \mid P \; \text{has $r$ $0$'s in area word} \} \label{mr} \\
	&\RP(m \backslash r, n)^{\ast k, j} && \coloneqq \; \; \{ P \in \RP(m,n)^{\ast k, j} \mid P \; \text{has $r$ $0$'s in area word}  \} \\
	&\RP(m \backslash r, n)^{\circ 0, 0} && \coloneqq \; \; \{ P \in \RP(m,n) \mid P \; \text{has $r\!-\!1$ $0$'s in bounce word} \} \label{ns} \\
	&\RP(m \backslash r, n)^{\circ k, j} && \coloneqq \; \; \{ P \in \RP(m,n)^{\circ k, j} \mid P \; \text{has $r\!-\!1$ $0$'s in b. word}  \}
\end{align}

Notice that we are including the first, artificial $0$ in \eqref{mr} and that we replaced $r$ with $r-1$ in \eqref{ns}. In particular, $r$ can be equal to $m+1$.

We now define three statistics on reduced polyominoes, namely \emph{area}, \emph{bounce}, and \emph{dinv}.

\begin{definition}
	For a doubly decorated reduced polyomino $P \in \RP(m,n)^{\ast k, j}$ we let $\area(P)$ be the sum of the letters in its area word (disregarding the bars), not counting decorated rises of either type.
\end{definition}

We now want to define the \emph{bounce path} of a reduced polyomino (see also \cite{DAdderio-Iraci-polyominoes-2017}*{Section~3}). It is a lattice path that starts from $(0,0)$ and goes east until it hits the beginning of a vertical green step, then it goes north until it hits the beginning of a red horizontal steps, then it bounces every time it hits the beginning of a step of any of the two paths. The labelling of the bounce path starts from $0$ and goes up by one in the alphabet $\overline{\mathbb{N}}$ every time it changes direction. 

\begin{figure}[!ht]
	\centering
	\begin{tikzpicture}[scale=0.6]
	\draw[gray!60, thin] (0,0) grid (6,11);
	
	\filldraw[yellow, opacity=0.3] (0,0) -- (1,0) -- (2,0) -- (2,1) -- (2,2) -- (2,3) -- (3,3) -- (3,4) -- (3,5) -- (4,5) -- (4,6) -- (4,7) -- (4,8) -- (5,8) -- (6,8) -- (6,9) -- (6,10) -- (6,11) -- (5,11) -- (5,10) -- (5,9) -- (4,9) -- (4,8) -- (4,7) -- (4,6) -- (3,6) -- (3,5) -- (3,4) -- (2,4) -- (1,4) -- (1,3) -- (1,2) -- (0,2) -- (0,1) -- (0,0);
	
	\draw[red, sharp <-sharp >, sharp angle = -45, line width=1.6pt] (0,0) -- (0,1) -- (0,2) -- (1,2) -- (1,3) -- (1,4) -- (2,4) -- (3,4) -- (3,5) -- (3,6) -- (4,6) -- (4,7) -- (4,8) -- (4,9) -- (5,9) -- (5,10) -- (5,11) -- (6,11);
	
	\draw[green, sharp <-sharp >, sharp angle = 45, line width=1.6pt] (0,0) -- (1,0) -- (2,0) -- (2,1) -- (2,2) -- (2,3) -- (3,3) -- (3,4) -- (3,5) -- (4,5) -- (4,6) -- (4,7) -- (4,8) -- (5,8) -- (6,8) -- (6,9) -- (6,10) -- (6,11);
	
	\draw[red, sharp <-sharp >, sharp angle = -45, dashed, line width=1.6pt] (0,0) -- (0,1) -- (0,2) -- (1,2) -- (1,3) -- (1,4) -- (2,4) -- (3,4) -- (3,5) -- (3,6) -- (4,6) -- (4,7) -- (4,8) -- (4,9) -- (5,9) -- (5,10) -- (5,11) -- (6,11);
	
	\draw[dashed, opacity=0.6, thick] (0,0) -- (1,0) -- (2,0) -- (2,1) -- (2,2) -- (2,3) -- (2,4) -- (3,4) -- (3,5) -- (3,6) -- (4,6) -- (4,7) -- (4,8) -- (4,9) -- (5,9) -- (6,9) -- (6,10) -- (6,11);
	
	\filldraw[fill=red]
	(1,2) circle (3pt)
	(4,6) circle (3pt);
	
	\filldraw[fill=green]
	(2,3) circle (3pt)
	(4,8) circle (3pt);
	
	\node[above] at (0.5,0.0) {$0$};
	\node[above] at (1.5,0.0) {$0$};
	\node[right] at (2.0,0.5) {$\bar{0}$};
	\node[right] at (2.0,1.5) {$\bar{0}$};
	\node[right, red] at (2.0,2.5) {$\bar{0}$};
	\node[right] at (2.0,3.5) {$\bar{0}$};
	\node[above, green] at (2.5,4.0) {$1$};
	\node[right] at (3.0,4.5) {$\bar{1}$};
	\node[right] at (3.0,5.5) {$\bar{1}$};
	\node[above] at (3.5,6.0) {$2$};
	\node[right, red] at (4.0,6.5) {$\bar{2}$};
	\node[right] at (4.0,7.5) {$\bar{2}$};
	\node[right] at (4.0,8.5) {$\bar{2}$};
	\node[above, green] at (4.5,9.0) {$3$};
	\node[above] at (5.5,9.0) {$3$};
	\node[right] at (6.0,9.5) {$\bar{3}$};
	\node[right] at (6.0,10.5) {$\bar{3}$};
	\end{tikzpicture}
	
	\caption{A doubly decorated reduced polyomino with its bounce path shown.}
	\label{fig:bounce-reduced-polyomino}
\end{figure}

The bounce word of a reduced polyomino is the sequence of labels attached to the steps of its bounce path.

\begin{definition}
	For a doubly decorated reduced polyomino $P \in \RP(m,n)^{\circ k, j}$ we let $\bounce(P)$ be the sum of the letters in its bounce word (disregarding bars), not counting the barred letters that lie in the same row of a decorated red valley, nor the unbarred letters that lie in the same column of a decorated green peak.
\end{definition}

Before introducing the third statistic, we need one more definition.

\begin{definition}
	Let $P \in \RP(m,n)$. For $1 \leq i < j \leq m+n$, we say that the pair $(i,j)$ is an \textit{inversion} if $i$-th letter of the area word of $P$ is the successor of the $j$-th letter of the area word of $P$ in the alphabet $\overline{\mathbb{N}}$.
\end{definition}

\begin{definition}
	For a reduced polyomino (possibly a doubly decorated one) $P \in \RP(m,n)^{\ast k, j}$ or $P \in \RP(m,n)^{\circ k, j}$ we let $\dinv(P)$ be the number of its inversions, which is not influenced by any decoration.
\end{definition}

Finally, we define $q,t$-enumerators for our sets as follows:

\begin{align*}
	\RP_{q,t}(m,n)^{\ast k, j} & \coloneqq \sum_{P \in \RP(m,n)^{\ast k, j} } q^{\dinv(P)} t^{\area(P)} \\
	\RP_{q,t}(m,n)^{\circ k, j} & \coloneqq \sum_{P \in \RP(m,n)^{\circ k, j} } q^{\area(P)} t^{\bounce(P)}
\end{align*} 

and the same for the other sets in which $r$ is specified. We have that these polynomials satisfy the same recursion as the symmetric function does.

\subsection{Recursion for $(\area, \bounce)$}

\begin{theorem}
	$\RP_{q,t}(m \backslash r, n)^{\circ k, j} = F_{m+1,r;n-j}^{(m+1-j,k)}$.
\end{theorem}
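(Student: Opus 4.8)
The plan is to prove the identity by induction on $m+n$, following the pattern of Theorems~\ref{thm: recursion pld dinv} and~\ref{thm: recursion pld bounce}: I would show that $\RP_{q,t}(m\backslash r,n)^{\circ k,j}$ satisfies, combinatorially, the \emph{intermediate} recursion \eqref{eq:reconn}--\eqref{eq:recogen} which $F^{(m+1-j,k)}_{m+1,r;n-j}$ obeys by the results of Section~3; since the polyominoes produced by the recursive step have strictly smaller size, the inductive hypothesis identifies them with the corresponding $F$'s and the two quantities agree. Concretely, specializing \eqref{eq:recogen} to the indices of $F^{(m+1-j,k)}_{m+1,r;n-j}$ — so that the upper subscript collapses to $n$, the inner size parameter to $m+1-k-r$, and the reduced superscript indices to $n-k$ and $j-s$ — and invoking the statement of the theorem in the smaller case, the identity to be established takes the form
\begin{equation*}
\RP_{q,t}(m\backslash r,n)^{\circ k,j}
= t^{\,m+1-k-r}\sum_{b=0}^{n-j}\sum_{s=0}^{r} q^{\binom s2}\qbinom{r}{s}_q\qbinom{r+b-1}{b}_q\,
\RP_{q,t}\bigl((n-1)\backslash(s+b),\,m+1-r\bigr)^{\circ\,j-s,\,k},
\end{equation*}
together with the base case $r=m+1$.

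For $r=m+1$ (the top case of the recursion, where the second subscript equals the first): having $m$ unbarred zeros in the bounce word forces the green path to run entirely east and then entirely north, so it carries no green peak, which accounts for the factor $\delta_{k,0}$; the bounce is then identically zero and the red path is an arbitrary monotone path from $(0,0)$ to $(m,n)$ bearing $j$ decorated red valleys. Summing $q^{\area}$ over these configurations is a routine $q$-binomial computation, of the same flavour as the base cases in the proofs of Theorems~\ref{thm: recursion pld dinv} and~\ref{thm: recursion pld bounce}, and it yields $q^{\binom j2}\qbinom{m+1}{j}_q\qbinom{m+n-j}{n-j}_q$, which is exactly $F^{(m+1-j,k)}_{m+1,m+1;n-j}$ by \eqref{eq:reconn}. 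The boundary value $r=0$ is vacuous on both sides (the bounce word cannot contain $-1$ zeros, and $\delta_{m+1,0}=0$), so the recursive range is $1\le r\le m$.

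For $1\le r\le m$ the recursion is realized by a bounce-peeling operation in the spirit of the sweep/zeta maps on parallelogram polyominoes: one deletes from $P$ the part swept out before the bounce path first returns to a boundary path and re-reads the surviving data along the bounce path, producing a doubly decorated reduced polyomino whose parameters are exactly those read off \eqref{eq:recogen}, i.e. an element of $\RP\bigl((n-1)\backslash(s+b),\,m+1-r\bigr)^{\circ\,j-s,\,k}$ — the interchange of the two dimensions being the usual signature of such constructions. As in the proofs of Theorems~\ref{thm: recursion pld dinv} and~\ref{thm: recursion pld bounce}, each summation variable acquires a concrete meaning ($s$ = number of green horizontal steps below the first bounce not lying below a decorated green peak, $r-s$ = number that do, $b$ = red valleys removed at the bottom level, $j-s$ = decorated green peaks surviving in the reduced polyomino); the factor $t^{\,m+1-k-r}$ records the induced decrease of $\bounce$, while $q^{\binom s2}\qbinom{r}{s}_q$ and $\qbinom{r+b-1}{b}_q$ record the area created under the deleted region and the interlacings of the deleted steps, exactly as the analogous factors are explained in the proof of Theorem~\ref{thm: recursion pld bounce}. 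As there, one also has to resolve the ambiguity of a decoration landing on the last deleted step by splitting it into two terms, since the first green step of the reduced polyomino can never be a decorated peak. The main obstacle — and the heart of the argument — is to pin down this peeling precisely, to verify that it is a size-strictly-decreasing bijection onto the fibres dictated by the recursion, and to check that it transports the pair $(\area,\bounce)$ exactly as \eqref{eq:recogen} prescribes.
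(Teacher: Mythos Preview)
Your overall plan---prove the identity by induction on size, showing that $\RP_{q,t}(m\backslash r,n)^{\circ k,j}$ satisfies the single--step recursion \eqref{eq:reconn}--\eqref{eq:recogen} via a bounce--peeling map that swaps the two paths---is exactly the paper's approach, and your displayed recursion and base case agree with the paper's after the change of variables $(w,s)_{\text{paper}}=(s,s+b)_{\text{yours}}$.

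Where your proposal goes wrong is in the combinatorial reading of the summation indices. You assign $s$ the meaning ``green horizontal steps below the first bounce not lying below a decorated green peak'' and $r-s$ the complementary count. But the first $r-1$ horizontal green steps are \emph{all} before any vertical green step, so no decorated green peak can sit above them; your interpretation would force $s=r$ identically and the sum would collapse. In the paper's peeling the variable $w$ (your $s$) is instead the number of \emph{decorated red valleys} whose horizontal coordinate lies in $\{0,\dots,r-1\}$: these are precisely the decorations that disappear, and after the flip the surviving $j-w$ red valleys become the $j-w$ green peaks of the reduced polyomino, while all $k$ green peaks survive and become the $k$ red valleys. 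Correspondingly, $q^{\binom{w}{2}}\qbinom{r}{w}_q$ records the area in the rows of those $w$ removed red valleys, and $\qbinom{r+s-w-1}{s-w}_q$ records the area in the remaining $s-w$ rows among the $s$ rows labelled $\bar 0$ by the bounce path. Your reading of $b$ as ``red valleys removed at the bottom level'' is likewise off.

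Finally, the ``split into two terms to handle a decoration on the last deleted step'' is a device from the Dyck--path bounce recursion (Theorem~\ref{thm: recursion pld bounce}) and is not needed here: in the polyomino peeling no boundary ambiguity of that kind arises, and the paper's recursion is a single sum.
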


\begin{proof}
	It is enough to prove that $\RP_{q,t}(m \backslash r, n)^{\circ k, j}$ satisfy the recursion
	\begin{align*}
		\RP_{q,t}(m \backslash r, n)^{\circ k, j} & = t^{m-r-k+1} \sum_{w=0}^{r} \sum_{s=0}^{n} q^{\binom{w}{2}} \qbinom{r}{w}_q \qbinom{r+s-w-1}{s-w}_q \\ & \times \RP_{q,t}(n-1 \, \backslash \, s, \, m-r+1)^{\circ j-w, k}
	\end{align*}
	with 
	\[ \RP_{q,t}(m \backslash m+1, n)^{\circ k, j} = \delta_{k,0} q^{\binom{j}{2}} \qbinom{m+1}{j}_q \qbinom{m+n-j}{m}_q . \]
	\begin{figure}[!ht]
		\begin{center}
			\begin{tikzpicture}[scale=0.6]
			\draw[gray!60, thin] (0,0) grid (12,7);
			
			\filldraw[yellow, opacity=0.3] (0,0) -- (2,0) -- (2,3) -- (7,3) -- (7,4) -- (10,4) -- (10,5) -- (12,5) -- (12,6) -- (8,6) -- (8,5) -- (5,5) -- (5,4) -- (3,4) -- (3,3) -- (1,3) -- (1,2) -- (0,2) -- cycle;
			
			\draw[red, sharp <-sharp >, sharp < angle = 45, sharp > angle = 0, line width=1.6pt] (0,0) -- (0,2) -- (1,2) -- (1,3) -- (3,3) -- (3,4) -- (5,4) -- (5,5) -- (8,5) -- (8,6) -- (12,6) -- (12,7);
			
			\draw[green, sharp <-sharp >, sharp < angle = -45, sharp > angle = 0, line width=1.6pt] (0,0) -- (2,0) -- (2,3) -- (7,3) -- (7,4) -- (10,4) -- (10,5) -- (12,5) -- (12,7);
			
			\draw[red, sharp <-sharp >, sharp < angle = 45, sharp > angle = 0, dashed, line width=1.6pt] (0,0) -- (0,2) -- (1,2) -- (1,3) -- (3,3) -- (3,4) -- (5,4) -- (5,5) -- (8,5) -- (8,6) -- (12,6) -- (12,7);
			
			\draw[orange!60, line width = 1.6pt] (1.9,0.9) rectangle (12.1,7.1);
			
			\draw[dashed, opacity=0.6, thick] (0,0) -- (2,0) -- (2,3) -- (7,3) -- (7,5) -- (12,5) -- (12,7);
			
			\filldraw[fill=green]
			(2,3) circle (3pt)
			(7,4) circle (3pt);
			
			\filldraw[fill=red]
			(1,2) circle (3pt)
			(5,4) circle (3pt);
			
			\node[below] at (0.5,0) {$0$};
			\node[below] at (1.5,0) {$0$};
			\node[left] at (2,0.5) {$\bar{0}$};
			\node[left] at (2,1.5) {$\bar{0}$};
			\node[left, red] at (2,2.5) {$\bar{0}$};
			\node[below, green] at (2.5,3) {$1$};
			\node[below] at (3.5,3) {$1$};
			\node[below] at (4.5,3) {$1$};
			\node[below] at (5.5,3) {$1$};
			\node[below] at (6.5,3) {$1$};
			\node[left] at (7,3.5) {$\bar{1}$};
			\node[left, red] at (7,4.5) {$\bar{1}$};
			\node[below, green] at (7.5,5) {$2$};
			\node[below] at (8.5,5) {$2$};
			\node[below] at (9.5,5) {$2$};
			\node[below] at (10.5,5) {$2$};
			\node[below] at (11.5,5) {$2$};
			\node[left] at (12,5.5) {$\bar{2}$};
			\node[left] at (12,6.5) {$\bar{2}$};
			\end{tikzpicture}
		\end{center}
		
		\caption{One step of the recursion for reduced polyominoes.}
		\label{fig:polyorecursion}
	\end{figure}
	
	After one step of the recursion, the polyomino will be the one delimited by the orange rectangle, flipped along the line $x=y$. The bounce drops by $m-r-k+1$, because every unbarred letter decreases by one in the alphabet (so its value drops by $1$), except the $r-1$ $0$'s, that are just removed, and the $k$ letters corresponding to decorated green peaks, whose value actually decrease, but they are not counted while computing bounce and so they should be ignored.
	
	The factor $q^{\binom{w}{2}} \qbinom{r}{w}_q$ takes care of the area in the rows containing a vertical step immediately after one of the $w$ decorated red valleys with horizontal coordinate from $0$ to $r-1$. The factor $\qbinom{r+s-w-1}{s-w}_q$ takes care of the area in the remaining $s-w$ rows, where $s$ is the number of $\bar{0}$'s in the bounce word.
	
	Now, the green and the red path switch roles, and the $w$ decorations in the first $r-1$ columns disappear. The rest is easy to check.
\end{proof}

\subsection{Recursion for $(\dinv, \area)$}

\begin{theorem}
	$\RP_{q,t}(m \backslash r, n)^{\ast k, j} = F_{m+1,r;n-j}^{(m+1-j,k)}$.
\end{theorem}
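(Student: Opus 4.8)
The plan is to proceed exactly as in the proof of the preceding theorem: I will show that $\RP_{q,t}(m \backslash r, n)^{\ast k, j}$ satisfies the same recursion and initial condition as the family $F_{m+1,r;n-j}^{(m+1-j,k)}$, namely
\begin{align*}
	\RP_{q,t}(m \backslash r, n)^{\ast k, j} & = t^{m-r-k+1} \sum_{w=0}^{r} \sum_{s=0}^{n} q^{\binom{w}{2}} \qbinom{r}{w}_q \qbinom{r+s-w-1}{s-w}_q \\ & \times \RP_{q,t}(n-1 \, \backslash \, s, \, m-r+1)^{\ast j-w, k}
\end{align*}
together with $\RP_{q,t}(m \backslash m+1, n)^{\ast k, j} = \delta_{k,0} q^{\binom{j}{2}} \qbinom{m+1}{j}_q \qbinom{m+n-j}{m}_q$. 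By \eqref{eq:recogen}, \eqref{eq:reconn} and the reindexing carried out in the proof of the preceding theorem, the polynomials $F_{m+1,r;n-j}^{(m+1-j,k)}$ satisfy precisely this recursion and initial condition; since these determine the family by induction on $m+n$, it suffices to verify them for $\RP_{q,t}(m \backslash r, n)^{\ast k, j}$.

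The initial condition is the easy case. When $r=m+1$ every unbarred letter of the area word equals $0$, forcing the area word to lie on the alphabet $\{0,\bar 0\}$ only, so in particular there are no unbarred rises, which accounts for the factor $\delta_{k,0}$; the $n$ barred letters then sit above this block of zeros in the simplest possible configuration, and a direct count of the inversions and of the placements of the $j$ decorations on the barred rises produces the three $q$-factors, just as in the analogous computations already carried out for the Dyck-path and for the $\circ$-version.

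For the recursive step I would take $P \in \RP(m \backslash r, n)^{\ast k, j}$ with $r \leq m$ and apply the following operation to its area word: delete the $r$ letters equal to $0$, shift every surviving letter of $\overline{\mathbb{N}}$ down one notch (so $\bar i \mapsto i$ and $i \mapsto \overline{i-1}$ for $i \geq 1$), and delete the decorations carried by the barred rises of value $\bar 0$ --- those are the $\bar 0$'s immediately preceded by a $0$, and after the shift they cease to be rises. The shift is an order isomorphism of the relevant alphabets that preserves the successor relation, so the output is again a legal area word; since it interchanges barred and unbarred letters it transposes the polyomino (swapping the roles of the red and green paths), landing in $\RP(n-1 \backslash s, m-r+1)^{\ast j-w, k}$, where $s$ is the number of $\bar 0$'s of the area word of $P$ and $w$ the number of decorations deleted. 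One then checks the statistics: $\area$ drops by exactly $m-r-k+1$, since the only values that change are those of the $(m+1)-r$ unbarred letters of positive value (each losing one unit), of which precisely $k$ --- the decorated unbarred rises --- are not counted by $\area$; and $\dinv$ drops by the number of inversions of the area word of $P$ involving a $0$, which are exactly the pairs formed by a $\bar 0$ followed later by a $0$ (all other inversions being preserved by the successor-preserving shift). The $q$-enumeration of these lost inversions, bundled with the choice of which barred rises of value $\bar 0$ carry a decoration, turns out to be $q^{\binom{w}{2}} \qbinom{r}{w}_q \qbinom{r+s-w-1}{s-w}_q$; summing over $s$ and $w$ yields the recursion. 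The operation is reversible --- relabel up one notch, reinsert the $r$ zeros in the positions determined by the recorded data, and restore the $w$ decorations --- so for each fixed pair $(s,w)$ it is a bijection, which is what is needed.

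The hardest part is the bookkeeping in the recursive step: one must verify that the letters of the area word other than $0$ and $\bar 0$ contribute nothing to the lost $\dinv$ and nothing to the combinatorics recorded by the two $q$-binomials, that the shifted word is always legal and that the zeros can be reinserted uniquely, and --- the genuinely delicate point --- that the lost inversions together with the decoration choices are enumerated by precisely $q^{\binom{w}{2}} \qbinom{r}{w}_q \qbinom{r+s-w-1}{s-w}_q$, and not merely by something with the same evaluation. An alternative route, presumably the bijection announced for this section, would be to construct a direct ``sweep map for reduced polyominoes'' from $\RP(m \backslash r, n)^{\ast k, j}$ to $\RP(m \backslash r, n)^{\circ k, j}$ carrying $(\dinv, \area)$ to $(\area, \bounce)$ --- in the spirit of the sweep map of Section~4 --- and then to appeal to the preceding theorem.
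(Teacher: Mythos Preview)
Your proposal is correct and follows essentially the same approach as the paper: both remove all the $0$'s from the area word, shift every remaining letter down one step in $\overline{\mathbb{N}}$ (swapping barred and unbarred), discard the decorations on the $\bar 0$-rises, and then track the drop in $\area$ and in $\dinv$ to recover the $q,t$-factors of the recursion. Your write-up is in fact more explicit than the paper's (which is very terse here), and your closing remark about the alternative bijective route correctly anticipates the $\zeta$ map of Theorem~\ref{th:redzetamap}.
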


\begin{proof}
	It is enough to prove that $\RP_{q,t}(m \backslash r, n)^{\ast k, j}$ satisfy the recursion
	\begin{align*}
		\RP_{q,t}(m \backslash r, n)^{\ast k, j} & = t^{m-r-k+1} \sum_{w=0}^{r} \sum_{s=0}^{n} q^{\binom{w}{2}} \qbinom{r}{w}_q \qbinom{r+s-w-1}{s-w}_q \\ & \times \RP_{q,t}(n-1 \, \backslash \, s, \, m-r+1)^{\ast j-w, k}
	\end{align*}
	with 
	\[ \RP_{q,t}(m \backslash m+1, n)^{\circ k, j} = \delta_{k,0} q^{\binom{j}{2}} \qbinom{m+1}{j}_q \qbinom{m+n-j}{m}_q . \]
	The recursive step consists in removing all the $0$'s, and going down by one step in the alphabet $0 < \bar{0} < 1 < \dots$. The area drops by $m+1-k-r$ (the number of unbarred, non decorated letters, minus the number of $0$'s). The factor $q^{\binom{w}{2}} \qbinom{r}{w}_q$ takes care of the inversions given by the $w$ decorated $\bar{0}$'s and the $0$'s. The factor $\qbinom{r+s-w-1}{s-w}_q$ takes care of the inversions between in the remaining $s-w$ $\bar{0}$'s and the $0$'s, where $s$ is the number of total $\bar{0}$'s.
	
	Now, barred and unbarred letters switch roles, and the $w$ decorations of rises of type $0 \bar{0}$ disappear. The rest is easy to check.
\end{proof}

\subsection{A statistics swapping bijection}

We also have a combinatorial proof of the identity \[ \RP_{q,t}(m,n)^{\ast k, j} = \RP_{q,t}(m,n)^{\circ k, j}. \]

\begin{theorem}
	\label{th:redzetamap}
	For $m \geq 0$, $n \geq 0$, $k \geq 0$, and $1 \leq r \leq m+1$, there is a bijection $\zeta \colon \RP(m \backslash r, n)^{\ast k, j} \rightarrow \RP(m \backslash r, n)^{\circ k, j}$ mapping the bistatistic $(\dinv, \area)$ to $(\area, \bounce)$.
\end{theorem}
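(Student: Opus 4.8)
The plan is to construct $\zeta$ by a direct manipulation of the area word, in the spirit of the sweep map used above in the bijection between $\DDd(m,n\backslash r)^{\ast a,\circ b}$ and $\DDb(m,n\backslash r)^{\triangle b,\circ a}$, and then to verify that it intertwines the two bistatistics. First I would describe the forward map. Given $P\in\RP(m\backslash r,n)^{\ast k,j}$, take its area word $a=a_1\cdots a_{m+n}$ over $\overline{\mathbb N}$; it begins with $a_1=0$, has $r$ letters equal to $0$, and $m+1$ unbarred and $n$ barred letters. I would build a reduced polyomino $\zeta(P)$ whose bounce word is an anagram of $a$: unbarred letters of $a$ become the unbarred (green/horizontal) letters of the bounce word and barred letters become the barred (red/vertical) ones, with the order inside each level $\{i,\bar i\}$ preserved exactly as in the Dyck-path case. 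Concretely, I would scan $a$ once for each level $i=0,\bar 0,1,\bar 1,\dots$ in increasing order, translating each occurrence into a step — or a turning pair of steps, when the letter is the image of a red valley and the bounce path must turn — so that inversions of $P$ become unit cells of area of $\zeta(P)$ lying on one side of the bounce path; the $r$ initial $0$'s of $a$ produce the first column of the red path, exactly as the leading $0$'s of an area word are peeled in the $(\dinv,\area)$ recursion. I would also transport decorations: a decorated unbarred rise becomes a decorated green peak, a decorated barred rise becomes a decorated red valley, so the counts $k$ and $j$ are preserved and the image lands in $\RP(m\backslash r,n)^{\circ k,j}$.

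Next I would check bijectivity. As in the Dyck-path argument, the key point is that the interlacing of horizontal and vertical steps of $\zeta(P)$, together with the positions of the decorations, determines $a$ and its decoration set uniquely, so the construction is reversible: one inverts it by reading the bounce word of the image and reconstituting $a$ level by level. I would then verify the statistic swap. The identity $\area(\zeta(P))=\dinv(P)$ would follow by showing that the cell of area attached to a given letter of the bounce word of $\zeta(P)$ equals the primary inversions that letter creates to its right plus the secondary inversions it creates to its left in $a$ — i.e.\ the number of equal unbarred letters on its left plus the number of successor letters on its right — which is precisely its local contribution to $\dinv(P)$, with a decorated $0\,\bar 0$ pair removing the same amount on both sides. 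The identity $\bounce(\zeta(P))=\area(P)$ would follow because the bounce word of $\zeta(P)$ is an anagram of $a$ carrying the same value on each letter, and a letter is omitted from the bounce count of $\zeta(P)$ exactly when its preimage is a decorated rise, hence omitted from $\area(P)$.

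The main obstacle I anticipate is the careful bookkeeping of the two alphabets of barred and unbarred letters of $\overline{\mathbb N}$ together with the red-valley/green-peak decorations: unlike the Dyck case, every level has two flavours of letter, the bounce path of a polyomino can pass through a cell either horizontally or via a diagonal-like turn, and the conventions that the first red step may be a valley while the first green peak is never decorated create boundary cases that must be reconciled with the definitions \eqref{mr} and \eqref{ns} and with the degenerate case $r=m+1$. A safe fallback, should the direct map prove delicate, is to build $\zeta$ recursively: the $(\dinv,\area)$ and $(\area,\bounce)$ recursions for reduced polyominoes proved above are term-by-term identical, so one can define $\zeta$ by matching the decomposition data $(w,s,P')$ of the first recursion with the reassembly data of the second and induct on $m+n$, the base case $r=m+1$ being the explicit bijection realising $\delta_{k,0}q^{\binom j2}\qbinom{m+1}{j}_q\qbinom{m+n-j}{m}_q$; in this version the hard part becomes checking that the two recursive decompositions genuinely act on the underlying sets of path configurations in compatible ways, rather than merely on their generating functions.
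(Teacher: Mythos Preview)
Your plan is essentially the paper's approach: the map $\zeta$ is exactly the polyomino sweep map of \cite{Aval-DAdderio-Dukes-Hicks-LeBorgne-2014}, adapted to reduced polyominoes as in \cite{DAdderio-Iraci-polyominoes-2017}, with the new observation that it carries the two extra kinds of decoration correctly. The paper, however, describes the map in the \emph{inverse} direction: starting from a polyomino in $\RP(m\backslash r,n)^{\circ k,j}$, one draws its bounce path, projects the bounce labels onto both the red and the green path, and then reads these labels along the diagonals to produce the area word of the image in $\RP(m\backslash r,n)^{\ast k,j}$. This direction is much cleaner to write down, and the fact that $\area\mapsto\dinv$ and $\bounce\mapsto\area$ is already established in the cited references for the undecorated case; the only new content is that red valleys become barred rises and green peaks become unbarred rises, carrying their decorations with the same value, which gives the decoration-sensitive half of the statistic match for free. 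Your forward description (``scan $a$ level by level and draw steps'') is harder to make precise and to invert, which is why the literature goes the other way.

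Two small corrections to your outline. First, in the polyomino setting $\dinv$ has only one kind of inversion (an $i<j$ with $a_i$ the successor of $a_j$), so the ``primary to the right, secondary to the left'' language is Dyck-path specific and should be replaced by the single inversion count. Second, your remark that ``a decorated $0\,\bar 0$ pair removes the same amount on both sides'' in the $\dinv=\area$ check is misplaced: by definition, rise decorations do \emph{not} affect $\dinv(P)$ and peak/valley decorations do \emph{not} affect $\area(\zeta(P))$, so the identity $\dinv(P)=\area(\zeta(P))$ is decoration-free. The decorations enter only in the identity $\area(P)=\bounce(\zeta(P))$, where a decorated rise in $P$ and the corresponding decorated red valley or green peak in $\zeta(P)$ suppress the same letter (with the same value) from the respective sums. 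Your recursive fallback is sound but unnecessary once the map is described as above.
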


\begin{proof}
	The map is essentially the same one described in \cite{Aval-DAdderio-Dukes-Hicks-LeBorgne-2014}*{Section~4}, adjusted to fit reduced polyominoes as in \cite{DAdderio-Iraci-polyominoes-2017}*{Theorem~3.9}. 
	
	Let us recall the definition of the $\zeta$ map. Pick a reduced polyomino with some decorated red valleys and green peaks and draw its bounce path; then, project the labels of the bounce path on both the red and the green path. Let us call \emph{bounce point} a vertex of the bounce path in which it changes direction. Now, build the area word of the image as follows: artificially start with a $0$, then pick the first bounce point on the red path, and write down the $0$'s and the $\bar{0}$'s as they appear going upwards along the red path up to that point (in this case, the relative order will always be with the $\bar{0}$ first, and all the $1$'s next). Then, go to the first bounce point on the green path, and insert the $1$'s after the correct number of $\bar{0}$'s, in the same relative order in which they appear going upwards to the previous bounce point. If a letter is decorated, keep the decoration. Now, move to the second bounce point on the red path, and repeat. See Figure~\ref{fig:zetamap} for an example.
	
	As proved in \cite{Aval-DAdderio-Dukes-Hicks-LeBorgne-2014}*{Section~4}, the result will be the area word of a $m \times n$ reduced polyomino. It is also proved that $\area$ is mapped to $\dinv$, since the squares of the starting reduced polyomino correspond to the inversions on the image.
	
	Red valleys are mapped into barred rises, because when reading the red path bottom to top, one reads the horizontal step first, which corresponds to an unbarred letter, and the vertical step next, which correspond to the next barred letter. Moreover, the decoration is kept on a letter with the same value. The same argument applies to green peaks being mapped to unbarred rises. This implies that $\bounce$ is mapped to $\area$.
	
	Furthermore, by construction one has that the number of $0$'s in the bounce word is equal to the number of $0$'s in the area word of the image polyomino (before adding the starting artificial one), since that area word is just an anagram of the bounce word of the preimage.
\end{proof}

\begin{figure}[!ht]
	\begin{center}
		\begin{tikzpicture}[scale=0.6]
		\draw[gray!60, thin] (0,0) grid (12,7);
		
		\filldraw[yellow, opacity=0.3] (0,0) -- (2,0) -- (2,3) -- (7,3) -- (7,4) -- (10,4) -- (10,5) -- (12,5) -- (12,6) -- (8,6) -- (8,5) -- (5,5) -- (5,4) -- (3,4) -- (3,3) -- (1,3) -- (1,2) -- (0,2) -- cycle;
		
		\draw[red, sharp <-sharp >, sharp < angle = 45, sharp > angle = 0, line width=1.6pt] (0,0) -- (0,2) -- (1,2) -- (1,3) -- (3,3) -- (3,4) -- (5,4) -- (5,5) -- (8,5) -- (8,6) -- (12,6) -- (12,7);
		
		\draw[green, sharp <-sharp >, sharp < angle = -45, sharp > angle = 0, line width=1.6pt] (0,0) -- (2,0) -- (2,3) -- (7,3) -- (7,4) -- (10,4) -- (10,5) -- (12,5) -- (12,7);
		
		\draw[red, sharp <-sharp >, sharp < angle = 45, sharp > angle = 0, dashed, line width=1.6pt] (0,0) -- (0,2) -- (1,2) -- (1,3) -- (3,3) -- (3,4) -- (5,4) -- (5,5) -- (8,5) -- (8,6) -- (12,6) -- (12,7);
		
		\draw[dashed, opacity=0.6, thick] (0,0) -- (2,0) -- (2,3) -- (7,3) -- (7,5) -- (12,5) -- (12,7);
		
		\filldraw[fill=green]
		(2,3) circle (3pt)
		(7,4) circle (3pt);
		
		\filldraw[fill=red]
		(1,2) circle (3pt)
		(5,4) circle (3pt);
		
		\node[above] at (0.5,0) {$0$};
		\node[above] at (1.5,0) {$0$};
		\node[right] at (2,0.5) {$\bar{0}$};
		\node[right] at (2,1.5) {$\bar{0}$};
		\node[right, red] at (2,2.5) {$\bar{0}$};
		\node[above, green] at (2.5,3) {$1$};
		\node[above] at (3.5,3) {$1$};
		\node[above] at (4.5,3) {$1$};
		\node[above] at (5.5,3) {$1$};
		\node[above] at (6.5,3) {$1$};
		\node[right] at (7,3.5) {$\bar{1}$};
		\node[right, red] at (7,4.5) {$\bar{1}$};
		\node[above, green] at (7.5,5) {$2$};
		\node[above] at (8.5,5) {$2$};
		\node[above] at (9.5,5) {$2$};
		\node[above] at (10.5,5) {$2$};
		\node[above] at (11.5,5) {$2$};
		\node[right] at (12,5.5) {$\bar{2}$};
		\node[right] at (12,6.5) {$\bar{2}$};
		
		\node[gray, left] at (0,0.5) {$\bar{0}$};
		\node[gray, left] at (0,1.5) {$\bar{0}$};
		\node[pink, left] at (1,2.5) {$\bar{0}$};
		\node[gray, below] at (0.5,2) {$0$};
		\node[gray, below] at (1.5,3) {$0$};
		\end{tikzpicture}
	\end{center}
	
	\caption{The first step needed to compute the $\zeta$ map. The final image will be the reduced polyomino with area word $0 \bar{0} \bar{0} 0 \!\! \stackrel{*}{{\color{red} \bar{0}}} \stackrel{*}{{\color{green} 1}} \!\! \bar{1} \!\! \stackrel{*}{{\color{green} 2}} \!\! \bar{2} 2 2 1 1 \!\! \stackrel{*}{{\color{red} \bar{1}}} \!\! 2 2 \bar{2} 1 1 0$}
	\label{fig:zetamap}
\end{figure}

\section{A statistics preserving bijection}

We have a combinatorial proof of the following theorem.

\begin{theorem}
	We have
	\[\RP_{q,t}(m \backslash r, n)^{\ast k, j}=\DDd_{q,t}(n-j,m+1 \backslash r)^{\ast k, \circ m+1-j},\]
	where $\DD(n-j,m+1 \backslash r)^{\ast k, \circ m+1-j}$ is the set of decorated Dyck paths of size $m+n-j+1$ with $m+1$ labels, $n-j$ zero valleys, $r$ $0$'s in the area word that are not zero valleys, $m+1-j$ decorated peaks, and $k$ decorated rises.
\end{theorem}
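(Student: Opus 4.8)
The quickest route to the \emph{equality} is to combine two of our earlier results: the recursion theorem for $\RP_{q,t}(m\backslash r,n)^{\ast k,j}$ gives $\RP_{q,t}(m\backslash r,n)^{\ast k,j}=F_{m+1,r;n-j}^{(m+1-j,k)}$, while Theorem~\ref{thm: recursion pld dinv}, specialised at $(m,n,r,a,b)=(n-j,\,m+1,\,r,\,k,\,m+1-j)$, gives $\DDd_{q,t}(n-j,m+1\backslash r)^{\ast k,\circ m+1-j}=F_{m+1,r;n-j}^{(m+1-j,k)}$ as well. The purpose of the theorem, though, is to produce an explicit bijection
\[
\Phi\colon\ \RP(m\backslash r,n)^{\ast k,j}\ \longrightarrow\ \DDd(n-j,m+1\backslash r)^{\ast k,\circ m+1-j}
\]
sending $(\dinv,\area)$ to $(\dinv,\area)$, extending the undecorated correspondence of \cite{Aval-DAdderio-Dukes-Hicks-LeBorgne-2014} and its singly decorated refinement in \cite{DAdderio-Iraci-polyominoes-2017}. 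The plan is to define $\Phi$ on area words.

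Given $P\in\RP(m\backslash r,n)^{\ast k,j}$ with area word $w=a_1\cdots a_{m+n+1}$ over $\overline{\mathbb N}=0<\bar0<1<\bar1<\cdots$, I would proceed as follows. First, \emph{delete} the $j$ letters of $w$ that are decorated barred rises; by the successor rule each such letter is a $\bar z$ immediately preceded by the unbarred letter $z$, and no two of them are adjacent, so this is unambiguous. Second, \emph{erase the bars}: since an ascent in $\overline{\mathbb N}$ raises the underlying value by $0$ or $1$ while a weak descent never raises it, and since admissibility forbids an ascent by two underlying levels, one checks that the resulting word over $\mathbb N$ is the area word of a genuine Dyck path $D$ of size $m+n-j+1$. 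Third, turn each surviving \emph{barred} letter into a zero valley of $D$ (one verifies these sit in valley positions), and each \emph{unbarred} letter into a positively labelled vertical step; decorate a canonically determined $(m+1-j)$-element subset of the peaks of $D$, put the $m+1-j$ largest labels there, the labels $1,\dots,j$ being then forced by the column-strictness condition. Fourth, carry the $k$ decorated unbarred rises of $P$ over to the $k$ decorated rises of $D$. I would prove $\Phi$ is a bijection by writing down the inverse: re-bar the zero-valley letters, reinsert before the appropriate steps a barred copy of the right value to recover the deleted decorated barred rises, and read off $P$; admissibility of the resulting $\overline{\mathbb N}$-word and the recovery of all decorations are exactly the reverse of the steps above.

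For the statistics: for the area, the deleted letters are decorated barred rises, which are not counted in $\area(P)$, and the $k$ decorated unbarred rises of $P$ become decorated rises of $D$, which are not counted in $\area(D)$, while the underlying values of the remaining letters of $w$ are precisely the entries of the area word of $D$; hence $\area(P)=\area(\Phi(P))$. For the dinv, recall that an inversion of $P$ is a pair $i<j$ with $a_i=\operatorname{succ}(a_j)$, i.e.\ either $a_i=\overline{a_j}$ (equal underlying values) or $a_i=a_j+1$ with $a_i$ unbarred (a unit drop); I would check that after the third step the pairs of the first kind biject with the \emph{primary} inversions of $D$ and those of the second kind with the \emph{secondary} inversions of $D$, and that deleting the decorated barred rises neither creates nor destroys inversions of either kind. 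The point to get right is that the labelling chosen above — which surviving unbarred letters get the ``large'' (decorated-peak) labels — is exactly what makes the number of label-inversions among vertical steps of $D$ at a common height equal to the number of successor pairs $z/\bar z$ of $w$ at that level; this is what forces the count $m+1-j$ of decorated peaks.

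The main obstacle will be the bookkeeping around the deletions: proving that erasing the bars after removing the $j$ decorated barred rises always yields an \emph{admissible} Dyck-path area word with the surviving barred letters genuinely at valley positions (so that they can legitimately be declared zero valleys), and pinning down the correct canonical subset of peaks to decorate. Slightly harder, and the real content, is matching $\dinv$: the polyomino statistic treats a barred and an unbarred letter of equal underlying value asymmetrically through the successor relation, and one must show that, after standardising the unbarred labels as above, this asymmetry is reproduced exactly by $\mathrm{primary}+\mathrm{secondary}$ inversions of $D$. If a transparent direct argument for these points proves elusive, one can instead verify that the two sides obey the \emph{same} recursion and initial conditions — both equal $F_{m+1,r;n-j}^{(m+1-j,k)}$ — and upgrade that matching to a bijection term by term, which is less illuminating but certainly works.
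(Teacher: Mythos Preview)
Your plan is precisely the paper's bijection: delete the $j$ decorated barred rises, drop the bars, make the surviving barred letters into zero valleys, carry the $k$ unbarred decorations to decorated rises, and then decorate $m+1-j$ peaks. The only substantive piece you leave open is the ``canonically determined'' subset of peaks, and the paper's choice is exactly the one that makes the $\dinv$ bookkeeping transparent: an unbarred letter $z$ is declared a decorated peak \emph{unless} it was immediately followed (in the polyomino area word) by a decorated barred rise $\bar z$. Since any unbarred $z$ not so followed has its next surviving letter of value $\le z$, it is automatically a peak in the Dyck word, so one gets exactly $m+1-j$ decorated peaks; and every surviving barred letter is preceded by something of value at least its own, hence is a (zero) valley. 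With this rule the $\dinv$ match falls out cleanly: counting on decorated peaks, the primary inversions to their left and the secondary inversions to their right come only from zero valleys, matching the polyomino inversions seen from unbarred letters; each of the $j$ undecorated unbarred letters then contributes its remaining primary-left plus secondary-right $\dinv$, and this is exactly the inversion contribution that the deleted $\bar z$ sitting just after it made in the polyomino. So the ``asymmetry'' you worried about is resolved not by a labelling trick but by the placement of the $j$ non-decorations. Your fallback via the common recursion $F_{m+1,r;n-j}^{(m+1-j,k)}$ is of course valid too, but with the rule above the direct argument goes through.
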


\begin{proof}
	We want to map $\RP(m \backslash r, n)^{\ast k, j}$ to $\DD(n-j,m+1 \backslash r)^{\ast k, \circ m+1-j}$, preserving the bistatistic $(\dinv, \area)$. We proceed as follows.
	
	Given the area word of such a polyomino, ignore the barred decorated letters. The remaining ones, disregarding bars, still form the area word of a Dyck path. This will be the actual path. If an unbarred letter is a decorated rise, then its image is still a rise, and we decorate it. We put zero labels on the steps corresponding to (non decorated) barred letters; those must be valleys, since there can't be a letter of strictly smaller value in the original area word of the polyomino. Next, we decorate all the peaks, except those whose corresponding letter of the area word of a polyomino is an unbarred letter followed by a decorated rise. Notice that all the steps that are neither zero valleys nor decorated peaks in the image must have this property (i.e. coming from an unbarred letter followed by a decorated rise). See Example~\ref{ex:polyo-to-pldp-bijection}.
	
	This maps obviously preserves the $\area$. If all the steps that are not zero valleys are decorated peaks, then the $\dinv$ can be counted only on those steps by looking at the primary on the left and the secondary on the right, and both those only come from zero valleys (on the same level on the left, one level lower on the right); those are exactly the contributions on the polyomino (counted on unbarred letters). If not, then each steps that neither a zero valley, nor a decorated peak adds a contribution given by its secondary $\dinv$ on the right and its primary on the left, which is exactly the contribution given on the $\dinv$ of the polyomino by the decorated barred letter that corresponds to that step on the image.
	
	To build the inverse map, one proceeds as follows. Given the area word of such a decorated Dyck path, one puts bars on letters corresponding to zero valleys, keeps the decorations on the rises (which are all unbarred, since all the barred letters are valleys and hence they can't be rises) then adds a decorated barred letter after each unbarred letter that does not correspond to a decorated peak (in particular one should do that if it doesn't correspond to a peak at all). It is easy to check that in this way one obtains the area word of a doubly decorated polyomino of the right size, and that this map is the inverse of the other.
\end{proof}

\begin{example}
	\label{ex:polyo-to-pldp-bijection}
	Let $0 \bar{0} \bar{0} 0 \!\! \stackrel{*}{{\color{red} \bar{0}}} \stackrel{*}{{\color{green} 1}} \!\! \bar{1} \!\! \stackrel{*}{{\color{green} 2}} \!\! \bar{2} 2 2 1 1 \!\! \stackrel{*}{{\color{red} \bar{1}}} \!\! 2 2 \bar{2} 1 1 0$ be the area word of a $12 \times 7$ doubly decorated reduced polyomino.
	
	We have two decorated barred rises, which we remove. The area word we get is $0 \bar{0} \bar{0} 0 \!\! \stackrel{*}{{\color{green} 1}} \!\! \bar{1} \!\! \stackrel{*}{{\color{green} 2}} \!\! \bar{2} 2 2 1 1 2 2 \bar{2} 1 1 0$, which (disregarding bars) is the area word of the Dyck path in Figure~\ref{fig:PLDP}.
	
	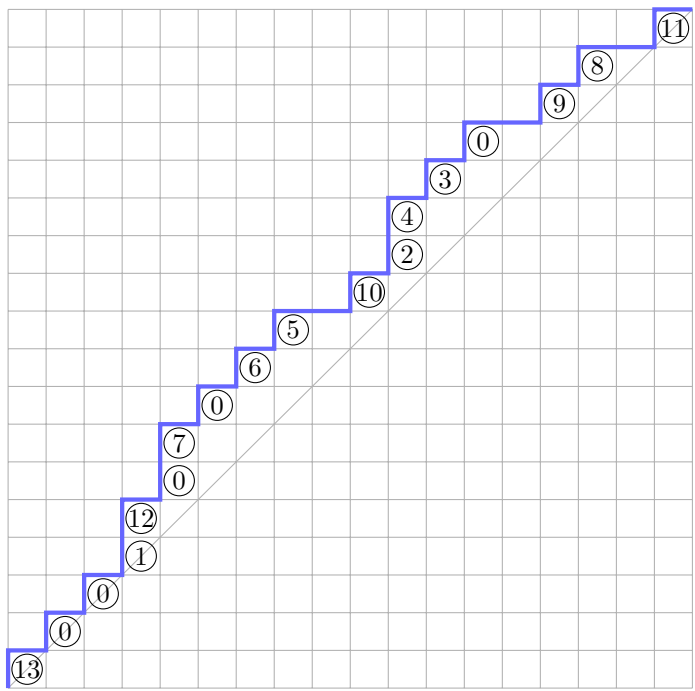
\begin{figure}[!ht]
		\centering
		\begin{tikzpicture}[scale=0.5]
		\draw[step=1.0, gray!60, thin] (0,0) grid (18,18);
		
		\draw[gray!60, thin] (0,0) -- (18,18);
		
		\draw[blue!60, line width=1.6pt] (0,0) -- (0,1) -- (1,1) -- (1,2) -- (2,2) -- (2,3) -- (3,3) -- (3,4) -- (3,5) -- (4,5) -- (4,6) -- (4,7) -- (5,7) -- (5,8) -- (6,8) -- (6,9) -- (7,9) -- (7,10) -- (8,10) -- (9,10) -- (9,11) -- (10,11) -- (10,12) -- (10,13) -- (11,13) -- (11,14) -- (12,14) -- (12,15) -- (13,15) -- (14,15) -- (14,16) -- (15,16) -- (15,17) -- (16,17) -- (17,17) -- (17,18) -- (18,18);

		\draw (0.5,0.5) circle (0.4 cm) node {$13$};
		\draw (1.5,1.5) circle (0.4 cm) node {$0$};
		\draw (2.5,2.5) circle (0.4 cm) node {$0$};
		\draw (3.5,3.5) circle (0.4 cm) node {$1$};
		\draw (3.5,4.5) circle (0.4 cm) node {$12$};
		\draw (4.5,5.5) circle (0.4 cm) node {$0$};
		\draw (4.5,6.5) circle (0.4 cm) node {$7$};
		\draw (5.5,7.5) circle (0.4 cm) node {$0$};
		\draw (6.5,8.5) circle (0.4 cm) node {$6$};
		\draw (7.5,9.5) circle (0.4 cm) node {$5$};
		\draw (9.5,10.5) circle (0.4 cm) node {$10$};
		\draw (10.5,11.5) circle (0.4 cm) node {$2$};
		\draw (10.5,12.5) circle (0.4 cm) node {$4$};
		\draw (11.5,13.5) circle (0.4 cm) node {$3$};
		\draw (12.5,14.5) circle (0.4 cm) node {$0$};
		\draw (14.5,15.5) circle (0.4 cm) node {$9$};
		\draw (15.5,16.5) circle (0.4 cm) node {$8$};
		\draw (17.5,17.5) circle (0.4 cm) node {$11$};
		
		\end{tikzpicture}
		\caption{The image of the reduced polyomino in Example~\ref{ex:polyo-to-pldp-bijection}}
		\label{fig:PLDP}
	\end{figure}
	
	We put zero labels on rows corresponding to barred letters (which are all valleys), and decorate all the peaks (i.e. labelling with big numbers in decreasing order) but the letters that were immediately before a decorated barred rise (which are the second $0$ and the third $1$, which will get labels $1$ and $2$).
\end{example}

\section{Open problems}

Recall the identity \eqref{eq:lemnablaEnk}, i.e.
\begin{equation}
F_{n,k;p}^{(d,\ell)}= \sum_{\gamma\vdash n+p-d}\left.(\mathbf{\Pi}^{-1}\nabla E_{n-\ell,k}[X])\right|_{X=MB_\gamma} \frac{\Pi_\gamma}{w_\gamma}e_{\ell}[B_\gamma]e_p[B_\gamma].
\end{equation}
Using \eqref{eq:Macdonald_reciprocity} and \eqref{eq:e_h_expansion}, it is not hard to show that for any $g\in \Lambda^{(n-\ell)}$,
\begin{equation}
\sum_{\gamma\vdash n+p-d}\left.(\mathbf{\Pi}^{-1}\nabla g[X])\right|_{X=MB_\gamma} \frac{\Pi_\gamma}{w_\gamma}e_{\ell}[B_\gamma]e_p[B_\gamma] = \left\< \Delta_{\phi \Delta_{e_{\ell}} \phi^{-1}(h_pe_{n-d})} \nabla g,h_{n-\ell}\right\>,
\end{equation}
where $\phi$ is the operator defined for any $f\in \Lambda$ by 
\begin{equation}
\phi f[X] :=f[MX].
\end{equation}
So we can write more compactly
\begin{equation} \label{eq:cpctFnkpdll}
F_{n,k;p}^{(d,\ell)}= \left\< \Delta_{\phi \Delta_{e_{\ell}} \phi^{-1}(h_pe_{n-d})} \nabla E_{n-\ell,k},h_{n-\ell}\right\>.
\end{equation}
For any composition $\alpha=(\alpha_1,\alpha_2,\dots,\alpha_{\ell(\alpha)})$, consider the symmetric functions $C_{\alpha}$ defined by
\begin{equation}
C_\alpha:=\mathbb{C}_{\alpha_1}\mathbb{C}_{\alpha_2}\cdots \mathbb{C}_{\alpha_{\ell(\alpha)}}(1)
\end{equation}
where the operators $\mathbb{C}_{m}$ are the ones appearing in \cite{Haglund-Morse-Zabrocki-2012}. Notice that the $C_\alpha$ are the essential ingredients of the \emph{compositional Shuffle conjecture} proved by Carlsson and Mellit in \cite{Carlsson-Mellit-ShuffleConj-2015}. Recall also from \cite{Haglund-Morse-Zabrocki-2012} the identity
\begin{equation} \label{eq:Calpha}
E_{n,k}=\mathop{\sum_{\alpha \vDash n}}_{\ell(\alpha)=k} C_\alpha.
\end{equation}

Lead by computer evidence, we risk the following conjecture.
\begin{conjecture}
	For any composition $\alpha\vDash n-\ell$,
	\begin{equation} \label{eq:openpbl}
	\left\< \Delta_{\phi \Delta_{e_{\ell}} \phi^{-1}(h_pe_{n-d})} \nabla C_\alpha,h_{n-\ell}\right\>\in \mathbb{N}[q,t].
	\end{equation}
\end{conjecture}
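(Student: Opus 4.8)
By \eqref{eq:Calpha} and \eqref{eq:cpctFnkpdll}, summing \eqref{eq:openpbl} over all $\alpha\vDash n-\ell$ with $\ell(\alpha)=k$ gives $F_{n,k;p}^{(d,\ell)}$, which by Theorem~\ref{thm: recursion pld dinv} equals the combinatorial enumerator $\DDd_{q,t}(p,n\backslash k)^{\ast \ell, \circ d}$. The natural route to \eqref{eq:openpbl} is therefore to refine this combinatorial identity by a composition statistic: attach to each decorated partially labelled Dyck path $D$ counted by $\DDd_{q,t}(p,n\backslash k)^{\ast \ell, \circ d}$ a composition $\mathsf{c}(D)\vDash n-\ell$ with $\ell(\mathsf{c}(D))=k$ --- the \emph{touch composition} of the path in this model (and, transported by the statistics-swapping/preserving bijections of Sections~4 and~5, a bounce composition in the $(\area,\bounce)$ and the polyomino models) --- and prove
\[
\left\< \Delta_{\phi \Delta_{e_{\ell}} \phi^{-1}(h_pe_{n-d})} \nabla C_\alpha,h_{n-\ell}\right\> \;=\; \sum_{D\,:\,\mathsf{c}(D)=\alpha} q^{\dinv(D)}\,t^{\area(D)},
\]
which exhibits the left-hand side as an element of $\mathbb{N}[q,t]$. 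When $p=0$ this is exactly the compositional Schr\"oder case of the Delta conjecture, already established by combining \cite{DAdderio-VandenWyngaerd-2017} with \cite{Zabrocki-4Catalan-2016}, so the real content is the extension to arbitrary $p$.

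\textbf{Symmetric-function side.} I would prove the displayed identity by induction on $n$, through a composition-refined version of the iterated recursion of Corollary~\ref{cor: iterated recursion} (which strictly decreases $n$). On the left one factors $C_\alpha=\mathbb{C}_{\alpha_1}C_{(\alpha_2,\dots,\alpha_{\ell(\alpha)})}$ and commutes $\nabla$ and the Pieri/delta operators past the leading $\mathbb{C}_{\alpha_1}$. The non-compositional derivations --- the proof of the recursion \eqref{eq:recogen} and of Theorem~\ref{thm:recoFnkpdl} --- run this computation through \eqref{eq:qn_q_Macexp}, \eqref{eq:Macdonald_reciprocity} and the master identity \eqref{eq:mastereq}, but only \emph{after} summing over $k$, which is precisely where the composition structure is forgotten; the task is to carry out the same manipulations while keeping $\alpha_1$ fixed. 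Concretely one needs a refinement of \eqref{eq:mastereq} in which the factor $e_\ell[B_\gamma]$ is replaced by the relevant $C$-type object so as to record how $\mathbb{C}_{\alpha_1}$ interacts with $h_k[(1-t)B_\gamma]$ and $e_p[B_\gamma]$; the outcome should be a recursion expressing \eqref{eq:openpbl} for $\alpha$ as an explicit sum of monomials times $q$-binomials times the analogous expression for $(\alpha_2,\dots,\alpha_{\ell(\alpha)})$ (of strictly smaller size), which sums back to \eqref{eq:recogen}.

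\textbf{Combinatorial side.} On the path side this is a refinement of the proof of Theorem~\ref{thm: recursion pld dinv}: the recursive step there deletes the $0$'s and the attached decorations at the bottom of the path and decrements the remaining area word, an operation that removes the initial segment of the touch composition; restricting the bijection to paths with $\mathsf{c}(D)=\alpha$ therefore reduces to paths with $\mathsf{c}(D')=(\alpha_2,\dots,\alpha_{\ell(\alpha)})$, and since the $q$-binomial factors in that proof are already local to the deleted segment, one obtains the matching refined recursion. The base cases --- the explicit evaluation \eqref{eq:reconn} (where $\alpha=(1^{n-\ell})$), the small-$n$ cases, and the $\delta$-function initial conditions of Corollary~\ref{cor: iterated recursion} --- are manifestly in $\mathbb{N}[q,t]$, so induction finishes the argument. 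One should also verify that the bijections of Sections~4 and~5 and the reduction of Section~6 intertwine the touch composition with the corresponding bounce compositions, so that positivity is simultaneously witnessed in all four of the combinatorial models of the paper.

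\textbf{Main obstacle.} The crux is the refined symmetric-function recursion. In the proof of \eqref{eq:recogen} the passage through \eqref{eq:mastereq} collapses cleanly only because one sums over $k$; isolating the contribution of a fixed leading part $\alpha_1$ requires controlling the operators $\mathbb{C}_m$ of \cite{Haglund-Morse-Zabrocki-2012} --- and their interaction with $\nabla$ and with plethystic substitution $\phi$ --- at the level of the Carlsson--Mellit ``Dyck path algebra'' \cite{Carlsson-Mellit-ShuffleConj-2015}, rather than merely knowing \eqref{eq:Calpha}. Equivalently, one must lift \eqref{eq:mastereq} itself to a compositional identity; I expect this to be where the genuine difficulty lies. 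A secondary, more routine but still delicate, point is to pin down exactly which composition statistic is the correct one in each model and to check that the maps of Sections~4--6 respect it, so that the same refinement proves positivity in all of the interpretations obtained in this work.
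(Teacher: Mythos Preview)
This statement is a \emph{conjecture} in the paper, not a theorem; the paper offers no proof, and in fact explicitly records computer evidence that your proposed approach fails in general. Your plan is to refine the identity $F_{n,k;p}^{(d,\ell)}=\DDd_{q,t}(p,n\backslash k)^{\ast \ell,\circ d}$ by a touch (diagonal) composition $\mathsf{c}(D)\vDash n-\ell$ and to prove that the $\alpha$-piece on the combinatorial side equals $\langle \Delta_{\phi\Delta_{e_\ell}\phi^{-1}(h_pe_{n-d})}\nabla C_\alpha,h_{n-\ell}\rangle$. But the paper reports computer experiments showing precisely that this equality does \emph{not} hold: the $(q,t)$-enumerator over paths with fixed diagonal composition (ignoring rows of decorated rises and of zero valleys) is in general different from \eqref{eq:openpbl}, and only the $q=1$ specialization agrees. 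So the displayed identity at the heart of your plan is false for general $p$, and the inductive scheme built on it cannot succeed as stated.

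This is not merely a matter of choosing the right composition statistic. The paper already singles out the natural candidate---exactly the one you propose---and observes that it matches only at $q=1$; any alternative statistic would have to deviate from the touch composition in a way that repairs the $\dinv$ side while still summing to the $E_{n,k}$ decomposition, and no such statistic is known. Your identification of the ``main obstacle'' as a compositional lift of \eqref{eq:mastereq} is therefore too optimistic: even before one reaches the symmetric-function technicalities, the combinatorial side of the hoped-for refinement already fails. The $p=0$ case you invoke (handled via \cite{Zabrocki-4Catalan-2016}) is genuinely special, and the paper's point in Section~7 is precisely that its extension to $p>0$ remains open.
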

Observe that summing over compositions $\alpha\vDash n-\ell$ with $\ell(\alpha)=k$, and using \eqref{eq:Calpha}, we get precisely our polynomials \eqref{eq:cpctFnkpdll}. 

In support of this conjecture, it is not hard to see that for $p=0$ we have
\begin{align}
	\left\< \Delta_{\phi \Delta_{e_{\ell}} \phi^{-1}(h_0e_{n-d})} \nabla C_\alpha,h_{n-\ell}\right\> & = \left\< \Delta_{h_\ell e_{n-d-\ell}} \nabla C_\alpha,h_{n-\ell}\right\>\\
	& = \left\< \Delta_{h_\ell } \nabla C_\alpha,e_{n-d-\ell}h_{d}\right\>,
\end{align}
which are precisely the polynomials appearing in the \emph{$4$-variable Catalan conjecture} in \cite{Haglund-Remmel-Wilson-2015}, proved by Zabrocki in \cite{Zabrocki-4Catalan-2016} by showing that they have a ``compositional'' combinatorial interpretation. 

In fact, computer experiments show that the $t$-enumerator of the area of decorated Dyck paths of size $p+n$ with $p$ zero valleys, $\ell$ decorated rises, $d$ decorated peaks, and diagonal composition \emph{that ignores the rows of the decorated rises and of the zero valleys} $\alpha$ (so that $\alpha\vDash n-\ell$) equals the expression in \eqref{eq:openpbl} at $q=1$. Unfortunately the $q,t$-enumerator of (dinv,area) of the same set seems to be generally different from the expression in \eqref{eq:openpbl}.

Still, there might be a combinatorial interpretation of these polynomials that extends the one of the $4$-variable Catalan, which in turn would refine our Theorem~\ref{thm: recursion pld dinv}.

	
\bibliographystyle{amsalpha}
\bibliography{Biblebib}

\end{document}